\newtheorem{theo}{Theorem}
\newtheorem{lem}{Lemma}
\newtheorem{prop}{Proposition}
\theoremstyle{remark}
\newtheorem{remark}{Remark}
\newcommand{\der}[1]{\frac{\mathrm{d}#1}{\mathrm{d}t}}
\newcommand{\derpar}[2]{\frac{\partial #1}{\partial #2}}
\newcommand{\norz}[2]{\left\|#1\right\|_{0,#2}}
\newcommand{\noro}[2]{\left|#1\right|_{1,#2}}
\newcommand{\inp}[3]{\left( #1,\,#2\right)_{#3}}
\newcommand{\vertiii}[1]{{\left\vert\kern-0.25ex\left\vert\kern-0.25ex\left\vert #1 
    \right\vert\kern-0.25ex\right\vert\kern-0.25ex\right\vert}}
\newcommand{\intT}[4]{\int_{#1}^{#2}#3\,\mathrm{d}#4}
\DeclareMathOperator\sech{sech}
\def\namedlabel#1#2{\begingroup
    #2%
    \def\@currentlabel{#2}%
    \label{#1}\endgroup
}
\begin{document}

\begin{frontmatter}
  \title{Anisotropic space-time adaptation for reaction-diffusion problems}
  \author[rvt]{Edward Boey\corref{cor}}
  \ead{eboey041@uottawa.ca}
  
  \author[rvt]{Yves Bourgault}
  \author[rvt]{Thierry Giordano}
  
  \
  \cortext[cor]{Corresponding author}
  \address[rvt]{Department of Mathematics and Statistics, University of Ottawa,
    585 King Edward Avenue, Ottawa, ON, Canada, K1N 6N5}

  \begin{abstract}
    
    A residual error estimator is proposed for the energy norm of the error for
    a scalar reaction-diffusion problem and for the monodomain model used in
    cardiac electrophysiology. The problem is discretized using $P_1$ finite
    elements in space, and the backward difference formula of second order
    (BDF2) in time. The estimator for space makes use of anisotropic
    interpolation estimates, assuming only minimal regularity. Reliability of
    the estimator is proven under certain mild assumptions on the convergence of
    the approximate solution. The monodomain model couples a nonlinear parabolic
    partial differential equation (PDE) with an ordinary differential equation
    (ODE) and this setting presents challenges theoretically as well as
    numerically. A space-time adaptation algorithm is proposed to control the
    global error, using a non-Euclidean metric for mesh adaptation and a simple
    method to adjust the time step. Numerical examples are used to verify the
    reliability and efficiency of the estimator, and to test the adaptive
    algorithm. The potential gains in efficiency of the proposed algorithm
    compared to methods using uniform meshes is discussed.

  \end{abstract}

  \begin{keyword}
    a posteriori error estimation; finite element method; anisotropic mesh
    adaptation; cardiac electrophysiology
  \end{keyword}

\end{frontmatter}

\section{Introduction}
\label{sec:intro-react}


Reaction-diffusion equations model physical problems in a large variety of
situations. Numerically, these problems tend to be stiff, and can be demanding
in terms of computational resources, particularly in regions that develop sharp
wave fronts, solitons, etc., and for solutions that exhibit multiscale
behaviour. Achieving an accurate solution with uniform spatial and temporal
resolution can be impractical or even impossible. This paper addresses the
problem of improving efficiency and accuracy by the use of adaptive
techniques. The adaptation should be based, when possible, on theoretically
justified \textit{a posteriori} error estimates that are computable from the
approximate solution. We consider in detail two problems: first, a parabolic
scalar reaction-diffusion equation, and following, the more complex monodomain
system used to model cardiac electrophysiology, which couples a parobolic
equation with an ordinary differential equation.


A variety of approaches have been considered to estimate the error for
reaction-diffusion systems. Energy techniques are applied in \cite{burpic03} to
derive explicit error estimates. The error is bounded by a sum of residual
terms, together with interpolation estimates. A similar class of estimators in
various norms is found in \cite{erijoh95}, \cite{estlarwil00}, where the error
is estimated using duality techniques. The residual terms are weighted by
stability constants, obtained by approximating a dual problem, which indicate
the rate of accumulation of error. While such techniques are generally more
expensive, owing to the requirement of solving a dual problem, they are useful
if one is interested in controlling the value of an arbitrary functional of the
error. Another approach can be found in \cite{coldeuerdlanpav06},
\cite{huakamlan10}, \cite{jens-multi}, where the error is approximated by
solving an auxiliary problem with a hierarchical basis.


Equally important to error estimation is the application of adaptation
techniques. Mesh adaptation based on a posteriori error estimates for
reaction-diffusion problems can be found in \cite{biebab82b}, \cite{erijoh95},
\cite{estlarwil00}, \cite{coldeuerdlanpav06} in an isotropic context. Here the
primary mesh operations performed are refinement and coarsening respectively in
regions where the estimator is large or small.  These estimators generally
employ finite element interpolation or projection estimates, which rely on a
mesh regularity assumption such as the minimum (or maximum) angle
condition. With the aid of anisotropic interpolation estimates, for instance
from \cite{cao07}, \cite{forper01}, \cite{kun99}, the classical a posteriori
estimates have been applied to mesh adaptation in anisotropic framework in
\cite{huakamlan10}, \cite{kun99}, \cite{micper10}, \cite{pic03a} for linear
problems, and in \cite{boupicalalos09}, \cite{burpic03} for nonlinear
problems. The adaptation is driven by constructing a non-Euclidean metric from
the estimator, often employing a gradient and Hessian recovery technique. See
for instance \cite{fregeo08}, \cite{habdombouaitforval00}, \cite{losala11a} for
details on metric mesh adaptation in a general context. The use of anisotropic
methods is generally found to result in significantly lower error for a given
number of elements compared to isotropic methods; see for instance
\cite{forper03}, \cite{huakamlan10}.


The majority of the adaptive techniques applied to the models in cardiac
electrophysiology have been dedicated to isotropic meshes. A heuristic method is
employed in \cite{lingrotve03}, where mesh elements containing the wave-front
are successively refined, based on the observation that the variation of the
solution is low outside this region. A similar heuristic method is employed in
\cite{whi07}, where elements are refined based on the magnitude of the gradient
of the transmembrane potential, which again, is expected to occur primarily in
the wave front. Additionally, the time step is adapted based on the variation of
certain ionic variables, specific to the ionic model. Both \cite{lingrotve03}
and \cite{whi07} report an increase in computational efficiency compared to
uniform refinement. In \cite{chegrehen00} the mesh and time step are adapted
based on estimating the local truncation error for a finite difference scheme
using a Richardson extrapolation, and a similar technique is applied to a finite
element method in \cite{trakim04}, while in \cite{moore00}, the interpolation
error is approximated for trilinear elements. In \cite{coldeuerdlanpav06}, the
authors use a hierarchical error estimator, which approximates the residual in a
higher-order space, for a multilevel finite element discretization in space and
a Rosenbrock time-stepping scheme. This work is applied to computations on a
realistic heart geometry, simulating fibrillation dynamics in
\cite{deuerdroi09}. The theoretical foundation of the method can be found in
\cite{jens-multi}. A different approach is taken in \cite{ABK13}, where they
apply a $p$-adaptive method. The error estimator is based on an approximation of
the error in space only, between the semidiscretization in time and the full
discretization. An advantage of their method is a relatively quick reassembly of
the matrices involved, since the mesh connectivity is preserved. While the
adaptive approach taken in this paper means that we cannot avoid the issue of
matrix reassembly, the general methodology we take is to not adapt the mesh too
often.


Work on adaptive methods in an anisotropic setting applied to electrophysiology
has only begun recently. The first work in that direction can be found in
\cite{bel08}. The mesh adaptation is based on a simple hierarchical estimator,
constructed from gradient and Hessian recovery techniques. Results are presented
for 2D spiral waves, where the elements of the mesh are aligned for the
minimization of the gradient of the error, capturing the anisotropic features of
the solution. Results in 3D are presented in \cite{belforbou09}, where the
authors apply a Riemannian metric adaptive technique using Hessian recovery, and
extend their results to 3D scroll waves in \cite{belforbou14}. In these works,
however, mesh adaptation is performed after every time step. The potential gains
in CPU time using the adaptive mesh could be offset by the increased overhead
required to perform the adaptation steps and to recompute the matrices to solve
the system. In \cite{sougorpigfarberpit10}, the adaptation step is performed
after fixed intervals, and a speedup of up to 11.2 compared to the uniform
method is observed. However, a significant percentage, up to $79$\%, of the
total computation time is still spent adapting the mesh. A parallel version was
presented in \cite{sougorpigfar10}. In \cite{rio12}, a similar method was used
while the portion of time spent adapting the mesh was reported to be about
$25$\% of the total time when adapting every $10$ time steps.


The application of adaptation in both space and time has been explored for a
number of problems and time-stepping methods. To give some examples, estimates
for arbitrary order continuous and discontinuous Galerkin methods are considered
in \cite{estlarwil00}, space-time adaptation in an anisotropic setting for the
first-order discontinuous Galerkin method is given in \cite{micper10}, while the
Crank-Nicolson method is used in \cite{lozpicpra09}, for linear problems and in
\cite{prapicgij10} for nonlinear ones. A popular choice for reaction-diffusion
problems is the fully implicit backward difference formula of second order
(BDF2). The method is second-order accurate, and has good stability properties
applied to nonlinear problems, for instance see \cite{EB08}, \cite{tho06} for
constant time step, \cite{belforbou09} for use in mesh adaptation, and
\cite{bec98}, \cite{emm04} for stability of the variable time-step method. As
far as the author is aware, a space-time adaptation algorithm driven by a
posteriori error estimates has not been considered for the BDF2 discretization.
In \cite{akrcha10} optimal order a posteriori error estimates were derived for
the method applied to linear ODEs. However, their theoretical and numerical
results considered only the use of a constant time step, and they did not
consider time-dependent PDEs.


The first goal of this paper is to extend the estimator in \cite{akrcha10} to
the full discretization of nonlinear problems. First-order simplicial elements
will be used for the spatial discretization, and we will employ a simple
explicit a posteriori error estimator using energy techniques. The general
framework uses the anisotropic interpolation estimates for piecewise linear
elements found in \cite{forper01} and \cite{forper03}, which is combined with a
gradient recovery operator to obtain an a posteriori error estimator as is done
in \cite{micper06} and \cite{pic03a}. As in \cite{pic03a}, mesh adaptation is
only performed when the estimated error is above or below a certain threshold,
therefore systematically avoiding the issue of too frequent adaptation found in
previous anisotropic methods in electrophysiology. For the monodomain system, we
find that it is necessary to treat the ODE variable differently as it does not
benefit from parabolic smoothing. We propose a modified estimator that does not
make use of a residual. In some previous work, for instance in
\cite{lingrotve03}, \cite{sougorpigfarberpit10}, \cite{whi07} the adaptation
takes into account only the variation of the transmembrane potential, likely
based on the observation that it varies more rapidly than the ODE
variables. Here we illustrate numerically that all variables should be taken
into account, especially when simulating a heartbeat with realistic duration
scales. We use the fully implicit backward difference formula of second order
(BDF2) for the discretization in time. The error due to the time discretization
is approximated with an extension of the estimator from \cite{akrcha10} to the
nonlinear setting and with a variable time step. A space-time adaptation
algorithm is employed to control the error of the full discretization. In
addition to the residual estimator, for the monodomain system we consider a
simplified estimator for the recovery variable based only on the recovered
gradient. The simplified estimator is found to be more useful in practice.


The rest of the paper will proceed as follows. In Section \ref{sec:model-react}
we introduce the problem studied and all functional notation. In Section
\ref{sec:apost} we introduce the a posteriori error estimators and prove some
upper bounds: first for the semidiscrete problem in space in Section
\ref{subsec:semidisc}, followed by the full BDF2 discretization in Section
\ref{subsec:gear}. In Section \ref{sec:numer-react} we verify numerically the
equivalence of the estimator with exact error, and validate the numerical
algorithms. It is shown that applying the algorithm leads to optimal
second-order behaviour in time. The efficiency of the method is considered in
detail.


\section{Functional spaces and model problems}
\label{sec:model-react}

Let $\Omega$ be a bounded polygonal domain in $\mathbb{R}^2$ with finitely many
edges. For two measurable functions $u,\,v:\Omega\rightarrow\mathbb{R}^n$ whose
inner product is integrable we will denote by
$\inp{u}{v}{\Omega} =\int_\Omega (u, v)\,\mathrm{d}x$ their $L^2$ inner product.
Let $V=H^1(\Omega),\,H=L^2(\Omega),$ with topological duals $V',\,H'.$ There
exist continuous dense embeddings $V\subseteq H=H'\subseteq V'.$ Therefore,
defining $\inp{u}{f}{V,V'}=\inp{u}{f}{\Omega}$ for $u\in V,\,f\in H,$ the
duality between $V$ and $V'$ can be expressed in terms of the duality between
$H$ and itself.

Let $T>0$ and define
\begin{equation*}
  \mathcal{W}(V,V')
  =\{w:[0,T]\rightarrow V\,:\,w\in L^2(0,T;V),\,\partial_tw\in L^2(0,T;V')\},
\end{equation*}
where the derivative is meant in the vector-valued distributional sense. There
is a continuous embedding $\mathcal{W}(V,V')\subseteq C([0,T];H),$ so that in
particular $w(0),\,w(T)\in H$ are well-defined, see for instance \cite{lio69}.

Let $f\in C(\mathbb{R})$. We will assume that there exists $2\leq p<\infty$,
with conjugate exponent $1<q\leq2$ such that $f(u)\in L^q(0,T;L^q(\Omega))$ for
$u\in\mathcal{W}(V,V')\cap L^p(0,T;L^p(\Omega))$, for instance if $f$ satisfies
the growth conditions from \cite{mar89}. As $\Omega\subseteq\mathbb{R}^2$, the
Sobolev embedding $V\subseteq L^p(\Omega)$ holds, so that the integral
$\int_0^T\inp{f(u)}{v}{\Omega}\,\mathrm{d}t$ is defined for $v\in
V$. Additionally, $f$ is assumed to satisfy one of the following:
\begin{description}
  \item[\namedlabel{cond1}{(F1)}] $f$ is continuously differentiable and for some
    $\alpha\geq0$ its derivative satisfies $f'(x)\geq-\alpha,\,\forall
    x\in\mathbb{R},$
  \item[\namedlabel{cond2}{(F2)}] $f$ is locally Lipschitz continuous.
\end{description}
A typical example of \ref{cond1} is $f(x)=\sum_{k=0}^{2p-1}b_kx^k$, that is, a
polynomial of odd degree with real coefficients such that $b_{2p-1}>0.$ We can
now define the model initial value problem. For
$u_0\in H,\,\,g\in L^2(0,T;L^2(\partial\Omega))$ and $s\in L^2(0,T;H)$, let
$u\in\mathcal{W}(V,V')\cap L^p(0,T;L^p(\Omega))$ be the solution to the initial
value problem
\begin{equation}
  \label{eqn:react}
  \left\{
    \begin{array}{ll}
      \dfrac{\partial u}{\partial t}-\Delta u+f(u)=s,&\quad\text{in }(0,T)\times\Omega,\\[1ex]
      \nabla u\cdot n=g,&\quad \text{in }(0,T)\times\partial\Omega,\\
      u(0)=u_0,&\quad \text{in }\Omega.
    \end{array}
  \right.
\end{equation}
Problem (\ref{eqn:react}) will be considered in the following variational formulation:
\begin{align}
  \der{}\inp{u}{v}{\Omega}+\inp{\nabla u}{\nabla v}{\Omega}+\inp{f(u)}{v}{\Omega}
  &=\inp{s}{v}{\Omega}+\inp{g}{v}{\partial\Omega},\quad\forall v\in V\nonumber\\
  u(0)&=u_0\label{eqn:react-var}.
\end{align}
For existence and uniqueness of solutions to the initial value problem
(\ref{eqn:react-var}), see for instance \cite{lio69} or \cite{tem88}.


We now introduce monodomain system, used to model problems in cardiac
electrophysiology. For $u_0,\,w_0\in H$, let
$u\in\mathcal{W}(V,V'),\,w\in \mathcal{W}(H,H')$ be the solution to the
following initial value problem:
\begin{equation}
  \label{eqn:mono}
  \left\{
    \begin{array}{ll}
      \dfrac{\partial u}{\partial t}-\Delta u+F(u,w)=0,&\quad\text{in }(0,T)\times\Omega,\\[1.5ex]
      \dfrac{\partial w}{\partial t}+G(u,w)=0,&\quad\text{in }(0,T)\times\Omega, \\[1ex]
      \nabla u\cdot n=0,&\quad \text{on }(0,T)\times\partial\Omega,\\
      u(0)=u_0,\\
      w(0)=w_0,
    \end{array}
  \right.
\end{equation}
where $F,\,G:\mathbb{R}^2\rightarrow\mathbb{R}$ are continuous.  Moreover, we
assume that $F,\,G$ satisfy one of the following conditions for every bounded
domain $\mathcal{D}\subseteq\mathbb{R}^2$,
\begin{description}
  \item[\namedlabel{mono-cond1}{(M1)}] there exists $\alpha_\mathcal{D}>0$ such
    that for
    $x=(x_1,x_2),\,y=(y_1,y_2)\in \mathcal{D}$
    \begin{align*}
      (F(x)-F(y))(x_1-y_1)+(G(x)-G(y))(x_2-y_2)
      &\geq-\alpha_{\mathcal{D}}\|x-y\|_2^2,
    \end{align*}
  \item[\namedlabel{mono-cond2}{(M2)}] there exists $\alpha_{\mathcal{D}}>0$ such that
    for $x,\,y\in \mathcal{D}$
    \begin{align*}
      \max\{|F(x)-F(y)|,\,|G(x)-G(y)|\}
      &\leq \alpha_{\mathcal{D}}\|x-y\|_2.
    \end{align*}
\end{description}
To improve the estimates, we also consider a stronger form of \ref{mono-cond1}
\begin{description}
  \item[\namedlabel{mono-cond3}{(M3)}] there exist positive constants
    $\alpha_\mathcal{D},$ $\beta_\mathcal{D}$ such that for
    $x=(x_1,x_2),\,y=(y_1,y_2)\in \mathcal{D}$
    \begin{align*}
      (F(x)-F(y))(x_1-y_1)+&(G(x)-G(y))(x_2-y_2)\\
                           &\geq-\alpha_{\mathcal{D}}(x_1-y_1)^2 + \beta_{\mathcal{D}}(x_2-y_2)^2.
    \end{align*}
\end{description}
The domain $\mathcal{D}$ will be implicitly assumed, and we will denote
$\alpha_{\mathcal{D}}$ by $\alpha$ and $\beta_{\mathcal{D}}$ by $\beta$ in
\ref{mono-cond1}, \ref{mono-cond2} and \ref{mono-cond3}. In particular, we
will assume the solution, and approximate solutions, are uniformly bounded in
$L^\infty(\Omega)$. Some details on the boundedness assumption follow in Section
\ref{sec:mono-est}.

There are several well-posedness results for (\ref{eqn:mono}). In
\cite{boucoupie09}, a weak solution is proven to exist globally in time provided
the reaction terms satisfy mild growth conditions, which apply for instance to
the FitzHugh-Nagumo and Aliev-Panfilov models. Uniqueness is proven for the
FitzHugh-Nagumo model. Additionally, local existence of more regular solutions
is proven provided further regularity of the reaction terms and the initial
data. In \cite{BFGZ08} existence is proven for a regularized version of the
Mitchell-Schaeffer model. Note that the results in the references above apply to
the bidomain problem, for which the monodomain problem is a simplification.

\begin{sloppypar}
  
  The error estimates in Section \ref{sec:mono-est} require additionally that
  $w$ belongs to $L^2(0,T;V)$. We can obtain more regular solutions by
  considering so-called strong solutions of (\ref{eqn:mono}) in the setting of
  \cite{hen81}. For instance, if we modify the definition of the spaces $Z$ and
  $Z^\alpha$ in \cite[Section 4]{boucoupie09} to use the space
  $B=V\cap L^\infty(\Omega)$, then on a maximal interval $[0,\tau_{\max})$,
  there exists a unique strong solution in the sense of \cite[Definition
  18]{boucoupie09} provided the function
  $(u,w)\in Z^\alpha\mapsto (F(u,w),G(u,w))\in Z$ is locally Lipschitz
  continuous and $(u_0,w_0)\in Z^\alpha$. In particular, we have
  $w\in C([0,\tau_{max});V)$. For the solution to exist globally, we require
  that the local Lipschitz condition be replaced by a global one. For $F$, this
  can be achieved provided there exists a priori bounds on $u,w$ in the
  $L^\infty(\Omega)$ norm. Such bounds will be discussed for specific ionic
  models in Section \ref{sec:mono-est} due to the existence of invariant
  rectangles. For $G$ this is not quite enough since the norm on $V$ involves
  the gradient. However, it suffices to assume that $G$ and its first
  derivatives are globally Lipschitz continuous, which is satisfied for the
  ionic models considered in this paper.
  
\end{sloppypar}


\section{A posteriori estimates}
\label{sec:apost}

\subsection{Semidiscrete problem}
\label{subsec:semidisc}

In this section we consider error estimates for the semidiscrete in space
approximation of problem (\ref{eqn:react}).

\subsubsection{Notation and background}
\label{subsubsec:notation}


Let $\mathcal{T}_h$ be a conformal triangulation of the domain $\Omega$ with
elements $K$ of diameter $h_K$ and consider the finite element approximation
space
$V_h=\{v_h\in C(\overline{\Omega}): v_h|_K\in P_1(K),\,\forall
K\in\mathcal{T}_h\}$. Let $\overline{u}_0\in V_h$ be an approximation of $u_0$.
The semidiscrete approximation $u_h\in C^1(0,T;V_{h})$ of $u$ satisfies
\begin{align}
  \der{}\inp{u_h}{v_h}{\Omega}+\inp{\nabla u_h}{\nabla v_h}{\Omega}+\inp{f(u_h)}{v_h}{\Omega}
  &=\inp{s}{v_h}{\Omega}+\inp{g}{v_h}{\partial\Omega},\quad v_h\in V_h,\nonumber\\
  u_h(0)&=\overline{u}_0.
\end{align}
Define the error $e_h=u-u_h.$ Below, we introduce the notation required to
derive an anisotropic residual estimator for the energy norm of the
error. Relevant results are cited from the literature.


The estimator combines information on the residual with anisotropic
interpolation estimates. Define the local residual $R_K(u_h)\in L^2(0,T;L^2(K))$ by
\begin{equation*}
  R_K(u_h)=\derpar{u_h}{t}-\Delta u_h+f(u_h)-s.
\end{equation*}
Here $\Delta$ denotes the Laplacian operator on $K$. The jump of the
derivative of $u_h$ for an element $K$ with edges $e_i$ is defined by
\begin{equation*}
  r_K(u_h)=\sum_{i=1}^3[\nabla u_h]_{e_i},
\end{equation*}
where the jump $[\nabla v_h]_{e_i}$ over $e_i$ is defined as follows: denoting
the outward unit normal by $n_i$ and the adjacent element (if it exists) by
$K'$, then
\begin{equation*}
  [\nabla u_h]_{e_i}=\left\{\begin{array}{ll}
      2\left(g-\nabla u_h\cdot n\right),
                              &\quad e_i\text{ is a boundary edge},\\
      \nabla (u_h)|_{K}\cdot n_i - \nabla (u_h)|_{K'}\cdot n_i,
                              &\quad e_i\text{ is an interior edge}.
  \end{array}\right.
\end{equation*}


Next, we introduce the interpolation estimates from \cite{forper01} and
\cite{forper03}. For a triangular element $K$, the anisotropic information comes
from the affine mapping $F_K:\hat{K}\rightarrow K$. The reference element
$\hat{K}$ is taken to be the equilateral triangle centred at the origin with
vertices at the points
$(0,1),\,(\frac{-\sqrt{3}}{2},\frac{-1}{2}),\,(\frac{\sqrt{3}}{2},\frac{-1}{2})$.
The Jacobian $J_K$ of $F_K$ is non-degenerate, so the singular value
decomposition (SVD) $J_K=\mathcal{R}_K^T\Lambda_K\mathcal{R}_K \mathcal{Z}_K$
consists of orthogonal matrices $\mathcal{R}_K,\,\mathcal{Z}_K$, and a positive
definite diagonal matrix $\Lambda_K$. The matrices $\mathcal{R}_K,\,\Lambda_K$
take the form
\begin{align*}
  \mathcal{R}_K=\begin{pmatrix}r_{1,K}^T\\r_{2,K}^T\end{pmatrix},
  \quad\quad\quad
  \Lambda_K=\begin{pmatrix}\lambda_{1,K} && 0\\0 && \lambda_{2,K}\end{pmatrix},
\end{align*}
where $\lambda_{1,K}\geq\lambda_{2,K}>0$, and $r_{1,K},\,r_{2,K}$ are orthogonal
unit vectors. Geometrically, the SVD represents the deformation of the unit
ball in $\mathbb{R}^2$ to an ellipse with axes of length
$\lambda_{1,K},\,\lambda_{2,K}$ in directions $r_{1,K},\,r_{2,K}$ respectively.
Moreover, the SVD also represents $K$ in the sense that the ellipse
circumscribes the element.

Let $I_h:H^1(\Omega)\rightarrow V_h$ denote a Scott-Zhang interpolation
operator, see \cite{scozha90}. Define the following ``Hessian'' type matrix:
\begin{equation*}
  \tilde{G}_K(v)=
  \left(\int_{\Delta_K}\frac{\partial v}{\partial x_i}\frac{\partial v}{\partial
    x_j}\,\,\mathrm{d}x\right)_{i,j},\quad\quad\text{for }v\in V,
\end{equation*}
and let
\begin{equation*}
  \tilde{\omega}_K(v)=\left(\lambda_{1,K}^2r_{1,K}^T \tilde{G}_K(v)r_{1,K}+\lambda_{2,K}^2r_{2,K}^T \tilde{G}_K(v)r_{2,K}\right)^{1/2}.
\end{equation*}
Here $\Delta_K$ is the patch of elements containing a vertex common to
$K$. Recall that as in \cite{micperpic03}, the usual minimum-angle condition is
not required, but instead, the uniform bound of the interpolation operator $I_h$
requires a mild patch condition to hold. In what follows, a constant
$C_{\hat{K}}$ will denote a positive constant which relies on such a patch
condition.

Define the local anisotropic residual estimator by
\begin{align}
  \eta_{K,t}^2
  &=\left(\|R_K(u_h)\|_{0,K} +
    \left(\frac{h_K}{\lambda_{1,K}\lambda_{2,K}}\right)^{1/2}\|r_K(u_h)\|_{0,\partial
      K}\right)\tilde{\omega}_K(e_h),\label{eq-est-aniso-rdiff}
\end{align}
and the global estimator
\begin{align}
  \eta_t^2
  =\sum_K\eta_{K,t}^2,
  \quad\quad\quad
  \eta
  =\left(\int_0^T\eta_t^2\,\mathrm{d}t\right)^{1/2}.
\end{align}




\begin{lem}\label{lem:distr}
  There exists a constant $C_{\hat{K}}>0$ independent of the mesh such that the following
  distributional inequality holds:
  \begin{align}
    \frac{1}{2}\der{}\norz{e_h}{\Omega}^2+\noro{e_h}{\Omega}^2
    &\leq C_{\hat{K}}\eta_t^2 - \inp{f(u)-f(u_h)}{e_h}{\Omega}.\label{eqn:react-res}
  \end{align}
\end{lem}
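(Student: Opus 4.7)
The natural plan is to derive an error equation, test it with $e_h$ itself, and then use Galerkin orthogonality together with the anisotropic interpolation estimates of \cite{forper01,forper03} to identify the residual terms with $\eta_t^2$. Subtracting the semidiscrete equation from (\ref{eqn:react-var}) for a general $v\in V$ gives
\begin{equation*}
  \der{}\inp{e_h}{v}{\Omega}+\inp{\nabla e_h}{\nabla v}{\Omega}+\inp{f(u)-f(u_h)}{v}{\Omega}=\mathcal{R}(v),
\end{equation*}
where $\mathcal{R}(v):=\inp{s}{v}{\Omega}+\inp{g}{v}{\partial\Omega}-\inp{\partial_t u_h}{v}{\Omega}-\inp{\nabla u_h}{\nabla v}{\Omega}-\inp{f(u_h)}{v}{\Omega}$ vanishes on $V_h$. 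Testing with $v=e_h$, which is permitted since $e_h\in\mathcal{W}(V,V')$, and invoking the standard chain rule from \cite{lio69} to write $\inp{\partial_t e_h}{e_h}{\Omega}=\tfrac{1}{2}\der{}\norz{e_h}{\Omega}^2$ in the distributional sense on $(0,T)$, one obtains
\begin{equation*}
  \frac{1}{2}\der{}\norz{e_h}{\Omega}^2+\noro{e_h}{\Omega}^2+\inp{f(u)-f(u_h)}{e_h}{\Omega}=\mathcal{R}(e_h),
\end{equation*}
reducing the claim to the bound $\mathcal{R}(e_h)\leq C_{\hat{K}}\eta_t^2$.

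By Galerkin orthogonality, $\mathcal{R}(e_h)=\mathcal{R}(e_h-I_he_h)$. Integrating by parts on each element and using that $\Delta u_h=0$ on $K$ since $u_h$ is piecewise $P_1$, the volume contribution collapses to $-\sum_K\inp{R_K(u_h)}{e_h-I_he_h}{K}$. The boundary terms combine into edge jumps: each interior edge receives matching contributions from its two neighbouring elements, producing the one-sided normal difference in the definition of $[\nabla u_h]_{e_i}$, while boundary edges combine with the Neumann datum to produce $g-\nabla u_h\cdot n$. The factor of two in the boundary part of $[\nabla u_h]_{e_i}$ is tuned exactly so that the total edge contribution can be written uniformly as $\tfrac{1}{2}\sum_K\inp{r_K(u_h)}{e_h-I_he_h}{\partial K}$.

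Finally, applying Cauchy--Schwarz element-by-element and edge-by-edge, and invoking the anisotropic Scott--Zhang estimates from \cite{forper01,forper03},
\begin{equation*}
  \norz{e_h-I_he_h}{K}\leq C_{\hat{K}}\tilde{\omega}_K(e_h),\qquad \norz{e_h-I_he_h}{\partial K}\leq C_{\hat{K}}\left(\frac{h_K}{\lambda_{1,K}\lambda_{2,K}}\right)^{1/2}\tilde{\omega}_K(e_h),
\end{equation*}
one arrives at
\begin{equation*}
  |\mathcal{R}(e_h-I_he_h)|\leq C_{\hat{K}}\sum_K\left(\norz{R_K(u_h)}{K}+\left(\frac{h_K}{\lambda_{1,K}\lambda_{2,K}}\right)^{1/2}\norz{r_K(u_h)}{\partial K}\right)\tilde{\omega}_K(e_h),
\end{equation*}
which is exactly $C_{\hat{K}}\eta_t^2$ once the finite overlap of the patches $\Delta_K$ is absorbed into the constant. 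Moving the reaction term to the right-hand side then yields the claimed inequality.

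The only finicky step is the bookkeeping in the edge-jump assembly: the factor-of-two convention on boundary edges in $[\nabla u_h]_{e_i}$ must be matched carefully against the double counting of interior edges in $\sum_K\inp{\cdot}{\cdot}{\partial K}$ so as to recover the $\tfrac{1}{2}\sum_K\inp{r_K(u_h)}{\cdot}{\partial K}$ representation with the correct global signs. The substantive analytic content is packaged in the anisotropic Scott--Zhang bounds of \cite{forper01,forper03} and in the chain-rule identity for functions in $\mathcal{W}(V,V')$.
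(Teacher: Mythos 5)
Your proposal is correct and follows essentially the same route as the paper: subtract the discrete equation, test with $e_h$, use Galerkin orthogonality to insert $e_h-I_h(e_h)$, integrate by parts elementwise to expose the element and edge residuals, and conclude with Cauchy--Schwarz and the anisotropic interpolation estimates of \cite{forper01,forper03}. Your explicit bookkeeping of the factor-of-two convention on boundary edges and the pairing of the edge jump with the trace of $e_h-I_h(e_h)$ is the consistent reading of the estimator's definition, matching the $\left(h_K/(\lambda_{1,K}\lambda_{2,K})\right)^{1/2}$ scaling in $\eta_{K,t}^2$.
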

\begin{proof}

  From the variational formulation (\ref{eqn:react-var}) of $u$,
  \begin{align*}
    &\inp{\derpar{e_h}{t}}{e_h}{\Omega} + \inp{\nabla e_h}{\nabla e_h}{\Omega}\\
    &=\inp{s-\derpar{u_h}{t}}{e_h}{\Omega} - \inp{\nabla u_h}{\nabla e_h}{\Omega}
      + \inp{g}{e_h}{\partial\Omega} -
      \inp{f(u)}{e_h}{\Omega}\\
    &=\inp{s-\derpar{u_h}{t}-f(u_h)}{e_h}{\Omega} - \inp{\nabla u_h}{\nabla e_h}{\Omega} +
      \inp{g}{e_h}{\partial\Omega} -
      \inp{f(u)-f(u_h)}{e_h}{\Omega}.\\
  \end{align*}
  Using the fact that $I_h(e_h)\in V_h$ a.e. $t\in(0,T)$ and applying integration by parts for
  each triangle $K$
  \begin{align*}
    &\inp{s-\derpar{u_h}{t}-f(u_h)}{e_h}{\Omega} - \inp{\nabla u_h}{\nabla e_h}{\Omega} +
      \inp{g}{e_h}{\partial\Omega}\\
    &=\inp{s-\derpar{u_h}{t}-f(u_h)}{e_h-I_h(e_h)}{\Omega} - \inp{\nabla u_h}{\nabla (e_h-I_h(e_h))}{\Omega} +
      \inp{g}{e_h-I_h(e_h)}{\partial\Omega}\\
    &=\sum_K\left(\inp{R_h(u_h)}{e_h-I_h(e_h)}{K} + \frac{1}{2}\inp{r_h(u_h)}{\nabla
      (e_h-I_h(e_h))}{\partial K}\right).
  \end{align*}
  Therefore, applying the interpolation estimates from \cite{forper01} and
  \cite{forper03}, and the Cauchy-Bunyakowsky-Schwartz inequality
  \begin{align*}
    &\frac{1}{2}\der{}\norz{e_h}{\Omega}^2+\noro{e_h}{\Omega}^2\\
    &\leq\sum_K\left(\norz{R_h(u_h)}{K}\norz{e_h-I_h(e_h)}{K}+\frac{1}{2}\norz{r_h(u_h)}{\partial
      K}\norz{\nabla (e_h-I_h(e_h))}{\partial K}\right) \\
    &\quad - \inp{f(u)-f(u_h)}{e_h}{\Omega}\\
    &\leq C_{\hat{K}}\eta_{t}^2 - \inp{f(u)-f(u_h)}{e_h}{\Omega}.
  \end{align*}
\end{proof}

From Lemma \ref{lem:distr}, it follows that the main difficulty to proceed is to
deal with the last term on the right side of (\ref{eqn:react-res}), which
depends on the nonlinear function $f$.

\subsubsection{Upper bounds}
\label{subsubsec:monotone}

Here we derive two theoretical upper bounds for the energy norm of the error in
terms of the estimator. Recall that the energy norm for $v\in L^2(0,T;V)$ is
given by
\begin{align}
  \vertiii{v}
  =\left(\int_0^T\noro{v(t)}{\Omega}^2\,\mathrm{d}t\right)^{1/2}.
\end{align}
We would like to find an upper bound for the error of the
form
\begin{align}\label{ineq-upper}
  \vertiii{e_h}
  &\leq C\eta,
\end{align}
where $C>0$ is close to $1,$ and does not depend on the choice of mesh.

Propositions \ref{prop-gron} and \ref{prop-AN} can be proved if $f$ satisfies
\ref{cond1} or \ref{cond2}. To simplify the presentation, we only prove the
results in terms of \ref{cond1}. To see how \ref{cond2} can be used instead, see
\textit{Remark} \ref{rem:lipschitz}.


\begin{prop}
  \label{prop-gron}
  Suppose that $f$ satisfies \emph{\ref{cond1}}. Then for $C_{\hat{K}}>0$, independent
  of the mesh, we have
  \begin{align}
    \frac{1}{2}\norz{e_h(T)}{\Omega}^2+\int_{0}^{T}e^{2\alpha (T-t)}\noro{e_h(t)}{\Omega}^2\,\mathrm{d}t
    &\leq\frac{1}{2}e^{2\alpha T}\norz{e_h(0)}{\Omega}^2+C_{\hat{K}}\int_{0}^{T}e^{2\alpha (T-t)}\eta_{t}^2\,\mathrm{d}t.\label{apost-est2}
  \end{align}
\end{prop}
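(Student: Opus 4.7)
The starting point is Lemma \ref{lem:distr}, which already absorbs the interpolation and residual analysis into $\eta_t^2$. What remains is an ODE-in-$t$ inequality for $\|e_h\|_{0,\Omega}^2$, contaminated only by the nonlinear term $-(f(u)-f(u_h),e_h)_\Omega$. My plan is therefore to bound this nonlinear term pointwise in $t$ using assumption \ref{cond1}, and then close the estimate with an integrating-factor argument of Gronwall type.

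More precisely, first I would rewrite the nonlinear term using the mean value theorem: for each $x\in\Omega$ and $t\in(0,T)$ there exists $\xi(x,t)$ between $u(x,t)$ and $u_h(x,t)$ such that
\begin{equation*}
  (f(u)-f(u_h))\,e_h = f'(\xi)\,e_h^{\,2} \;\geq\; -\alpha\, e_h^{\,2},
\end{equation*}
by \ref{cond1}. Integrating over $\Omega$ gives $-(f(u)-f(u_h),e_h)_\Omega \leq \alpha \norz{e_h}{\Omega}^2$. Substituting this into (\ref{eqn:react-res}) yields the distributional inequality
\begin{equation*}
  \der{}\norz{e_h}{\Omega}^2 + 2\noro{e_h}{\Omega}^2 \;\leq\; 2C_{\hat{K}}\,\eta_t^2 + 2\alpha\,\norz{e_h}{\Omega}^2.
\end{equation*}

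Next, I would multiply through by the integrating factor $e^{-2\alpha t}$ so that the left side consumes the $2\alpha\norz{e_h}{\Omega}^2$ term and produces an exact derivative:
\begin{equation*}
  \der{}\bigl(e^{-2\alpha t}\norz{e_h}{\Omega}^2\bigr) + 2 e^{-2\alpha t}\noro{e_h}{\Omega}^2 \;\leq\; 2C_{\hat{K}}\, e^{-2\alpha t}\,\eta_t^2.
\end{equation*}
Integrating from $0$ to $T$, and then multiplying by $\tfrac12 e^{2\alpha T}$, rearranges directly into (\ref{apost-est2}).

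The only genuine obstacle is the sign of the nonlinear contribution, which is precisely what \ref{cond1} is designed to handle via the one-sided bound on $f'$; under \ref{cond2} one would instead use local Lipschitz continuity combined with the assumed $L^\infty$ boundedness of $u$ and $u_h$ to produce the same type of estimate with a (possibly different) constant $\alpha$, as hinted at in the paragraph preceding the proposition. Everything else is a standard Gronwall-style manipulation, and no new regularity beyond what is provided by $e_h \in \mathcal{W}(V,V')$ is needed to justify the pointwise-in-time operations on $\norz{e_h}{\Omega}^2$.
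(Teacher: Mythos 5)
Your proposal is correct and follows essentially the same route as the paper: the one-sided bound $-(f(u)-f(u_h),e_h)_\Omega\leq\alpha\norz{e_h}{\Omega}^2$ via the mean value theorem and \ref{cond1}, inserted into Lemma \ref{lem:distr}, followed by a Gronwall-type argument. Your explicit integrating factor $e^{-2\alpha t}$ is just the differential form of the Gronwall step the paper invokes, and the final rearrangement reproduces (\ref{apost-est2}) exactly.
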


\begin{proof}

  From \ref{cond1} and the mean value theorem we conclude
  $(f(u)-f(u_h))(e_h)\geq -\alpha(e_h)^2$. Inequality (\ref{eqn:react-res})
  implies
  \begin{align}\label{}
    \frac{1}{2}\der{}\norz{e_h}{\Omega}^2+\noro{e_h}{\Omega}^2
    &\leq C_{\hat{K}}\eta_{t}^2 + \alpha\norz{e_h}{\Omega}^2.\label{eqn:react-gron1}
  \end{align}
  Taking the term $\noro{e_h}{\Omega}^2$ to the right hand side, we can apply
  Gronwall's inequality to (\ref{eqn:react-gron1}) to get
  \begin{align}\label{eqn:react-gron2}
    \frac{1}{2}\norz{e_h(T)}{\Omega}^2
    &\leq \frac{1}{2}e^{2\alpha T}\norz{e_h(0)}{\Omega}^2+\int_0^T e^{2\alpha(T-t)}\left(C_{\hat{K}}\eta_{t}^2-\noro{e_h(t)}{\Omega}^2\right)\,\mathrm{d}t,
  \end{align}
  and the result follows.
\end{proof}

\begin{sloppypar}
  In the context of (\ref{ineq-upper}), using the inequality
  $1\leq e^{2\alpha(T-t)}\leq e^{2\alpha T}$ and supposing that we may ignore
  the initial error term, we are led to consider the upper bound
  \begin{align}\label{ineq-gron-exp}
    \vertiii{e_h}
    &\leq e^{\alpha T}C_{\hat{K}}^{1/2}\eta.
  \end{align}
  While the constant on the right hand side of (\ref{ineq-gron-exp}) will not be
  close to $1$, we can at least conclude that an upper bound holds for fixed
  $T>0.$ For long time scales, however, we do not expect the upper bound to be
  sharp. Furthermore, the value $\alpha$ can be quite large as will be seen in
  Section \ref{sec:numer-react}. For instance, solutions to the bistable
  equation are traveling waves with width scaled proportional to
  $\alpha^{-1/2}$. Therefore, the stiffer the solution, the less optimistic is
  the theoretical result. Recall that classical a priori estimates for
  (\ref{eqn:react}) are typically of the form $C(u;T)e^{M T}O(h^k)$, where
  $C(u;T)$ depends on various norms of $u$, and where $M$ is a large constant
  depending on the derivatives of $f$. It should be remarked, however, that
  (\ref{apost-est2}) is essentially a worst case estimate in the sense that it
  is valid for any solution of the initial value problem (\ref{eqn:react}),
  including solutions with finite time blowup. In what follows, we derive a
  stronger estimate under the assumption that the approximate solution converges
  at an optimal rate.
\end{sloppypar}


We now get to the alternative upper bound in Proposition \ref{prop-AN}. The
following result is based on the assumption that the error converges faster in
the $L^2$ norm than the $H^1$ seminorm. We use a condition similar to
\cite[(3.4)]{burpic03}. For the general anisotropic case, we have some partial
results. We borrow from \cite{boiforfor12}, where the idea is that given a
$P_1$ approximation $u_h$, by utilizing the recovered gradient one may construct
a $P_2$ approximation $u_{h,2}$ which converges faster to $u$, both in $L^2$
norm and $H^1$ seminorm. While this superconvergence property would be
difficult to prove in general, test cases in \cite{boiforfor12} suggests that it
is true in practice. Condition (\ref{eqn:direct-equi-react}) is similar to
conditions used in \cite{pic06} and \cite{micper06}.
\begin{lem}[\cite{2016arXiv160900854B}, Proposition 1]\label{lem-l2-upper}
  Using the notation above, suppose that there exists $C_{\hat{K},1}>0$ such
  that, for all $K\in\mathcal{T}_h$ and a.e. $t\in [0,T]$,
  \begin{align}\label{eqn:direct-equi-react}
    \lambda_{1,K}\|\nabla (u_h-u_{h,2})\cdot r_{1,K}\|_{0,K}
    &\leq C_{\hat{K},1}\lambda_{2,K}\|\nabla (u_h-u_{h,2})\cdot r_{2,K}\|_{0,K}.
  \end{align}
  Then there exists $C_{\hat{K},2}>0$ independent of $t$ such that
  \begin{align}\label{eq-l2-upper-rdiff}
    \|e_h\|_{0,K}
    &\leq C_{\hat{K},2}\left(\lambda_{2,K}|e_h|_{1,K}
      + \|u-u_{h,2}\|_{0,K}
      + \lambda_{2,K}|u-u_{h,2}|_{1,K}\right).
  \end{align}
\end{lem}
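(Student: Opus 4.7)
The plan is to decompose the error as $e_h = (u-u_{h,2}) + (u_{h,2}-u_h)$. Two of the three terms on the right-hand side of the desired estimate involve $u-u_{h,2}$, so the real work is to show that the piecewise polynomial difference $v := u_{h,2}-u_h$ satisfies an anisotropic Poincaré-type inequality of the form $\|v\|_{0,K} \leq C\,\lambda_{2,K}\,|v|_{1,K}$. Once that is in hand, a triangle inequality on $|v|_{1,K} \leq |u-u_{h,2}|_{1,K} + |e_h|_{1,K}$ and on $\|e_h\|_{0,K} \leq \|u-u_{h,2}\|_{0,K} + \|v\|_{0,K}$ produces the claim.

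To establish the anisotropic Poincaré estimate on $v$, I would first use the structure of the $P_2$ recovery: since $u_{h,2}$ is built from the $P_1$ nodal values of $u_h$ together with a gradient-based correction on the edges, it agrees with $u_h$ at the vertices of $K$, and hence $v$ vanishes at each vertex. This vertex-zero condition is what allows a Poincaré inequality on the reference triangle $\hat{K}$: the pullback $\hat{v} = v \circ F_K$ satisfies $\|\hat{v}\|_{0,\hat{K}} \leq C_{\hat{K}} \|\hat{\nabla}\hat{v}\|_{0,\hat{K}}$.

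Next I would push this inequality back to $K$ using the SVD $J_K = \mathcal{R}_K^T \Lambda_K \mathcal{R}_K \mathcal{Z}_K$. The change of variables gives $\|\hat{v}\|_{0,\hat{K}}^2 = |\det J_K|^{-1}\|v\|_{0,K}^2$, while the identity $\hat{\nabla}\hat{v} = J_K^T \nabla v$ and the orthogonality of $\mathcal{R}_K,\mathcal{Z}_K$ produce
\[
\|\hat{\nabla}\hat{v}\|_{0,\hat{K}}^2 = |\det J_K|^{-1}\bigl(\lambda_{1,K}^2\|\nabla v\cdot r_{1,K}\|_{0,K}^2 + \lambda_{2,K}^2\|\nabla v\cdot r_{2,K}\|_{0,K}^2\bigr).
\]
The factors $|\det J_K|^{-1}$ cancel, yielding the anisotropic Poincaré inequality in the two stretched directions. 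Now the hypothesis (\ref{eqn:direct-equi-react}) is exactly what is needed to absorb the $\lambda_{1,K}$ contribution into a multiple of the $\lambda_{2,K}$ one, leaving $\|v\|_{0,K} \leq C_{\hat{K},2}\,\lambda_{2,K}\|\nabla v \cdot r_{2,K}\|_{0,K} \leq C_{\hat{K},2}\,\lambda_{2,K}\,|v|_{1,K}$.

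The main obstacle, in my view, is the anisotropic Poincaré step: without the vertex-zero property of the recovery operator one cannot drop the constant term normally present in Poincaré, and without the careful SVD-based change of variables one would only recover a bound scaling with the element diameter $h_K \sim \lambda_{1,K}$, which would defeat the purpose of the lemma. Once these two ingredients are combined with hypothesis (\ref{eqn:direct-equi-react}), the rest of the argument is a mechanical triangle inequality.
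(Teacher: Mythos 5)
The paper does not actually prove this lemma; it is quoted verbatim from Proposition 1 of the authors' companion preprint \cite{2016arXiv160900854B}, so there is no in-paper proof to compare against. Your argument is correct and is the standard one for results of this type: the splitting $e_h=(u-u_{h,2})+(u_{h,2}-u_h)$, the identity $\|\hat\nabla\hat v\|_{0,\hat K}^2=|\det J_K|^{-1}\bigl(\lambda_{1,K}^2\|\nabla v\cdot r_{1,K}\|_{0,K}^2+\lambda_{2,K}^2\|\nabla v\cdot r_{2,K}\|_{0,K}^2\bigr)$ obtained from the SVD, and the use of (\ref{eqn:direct-equi-react}) to absorb the $\lambda_{1,K}$ contribution are all exactly what is needed, and your observation that the conclusion would be false if $u_{h,2}-u_h$ could be a nonzero constant correctly identifies why a Poincar\'e inequality without mean subtraction must be justified. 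The one point to make explicit is that the vertex-interpolation property of $u_{h,2}$ is not stated anywhere in this paper --- it is a feature of the particular $P_2$ reconstruction of \cite{boiforfor12} (vertex values of $u_h$ retained, edge degrees of freedom filled in from the recovered gradient) --- and since $v=u_{h,2}-u_h$ lies in the fixed finite-dimensional space of quadratics on $\hat K$ vanishing at the three vertices, the reference-element Poincar\'e constant is uniform over all elements by norm equivalence, which is the cleanest way to close that step.
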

From (\ref{eq-l2-upper-rdiff}), we conclude that there exists $C_{\hat{K}}>0$
for a.e. $t\in [0,T]$, such that
\begin{align*}
  \|e_h\|_{0,\Omega}
  &\leq C_{\hat{K}}\left(\left(\sum_K\lambda_{2,K}^2|e_h|_{1,K}^2\right)^{1/2}
  + \|u-u_{h,2}\|_{0,\Omega}
  + \left(\sum_K\lambda_{2,K}^2|u-u_{h,2}|_{1,K}^2\right)^{1/2}\right).
\end{align*}
Under superconvergence assumptions for $u_{h,2}$, the last two terms on the
right are assumed to be higher order, so neglected. Note that this is stronger
than the usual superconvergence assumptions, which are for stationary problems.
Additionally, we will assume that there exists a constant $C>0$ such that for
all $K\in\mathcal{T}_h$ and a.e. $t\in [0,T]$
\begin{align}\label{eq:h1-dist}
  |e_h|_{1,K}&\leq C N_T^{-1/2}|e_h|_{1,\Omega},
\end{align}
where $N_T$ is the number of elements. The mesh adaptation algorithm will
attempt to equidistribute the error over all elements, so that
(\ref{eq:h1-dist}) should hold in practice. Then noting that
$\sum_K\lambda_{2,K}^2\leq\sum_K\lambda_{1,K}\lambda_{2,K}=|\Omega||\hat{K}|^{-1}$,
there exists a constant $C_{AN}>0$ such that up to higher-order terms
\begin{align}\label{eq:h1-l2}
  \|e_h\|_{0,\Omega}^2
  &\leq C_{AN}N_T^{-1}|e_h|_{1,\Omega}^2.
\end{align}

\begin{prop}
  \label{prop-AN}
  Suppose that $f$ satisfies \emph{\ref{cond1}} and the error satisfies
  \emph{(\ref{eq:h1-l2})}. Then there exists $C_{\hat{K}}>0$ such that
  \begin{align}
    \norz{e_h(T)}{\Omega}^2 + \left(1-\alpha C_{AN}N_T^{-1}\right)\vertiii{e_h}^2
    &\leq \norz{e_h(0)}{\Omega}^2
    + C_{\hat{K}}\eta^2.\label{eqn:est-AN}
  \end{align}
\end{prop}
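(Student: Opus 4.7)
The plan is to revisit the distributional inequality from Lemma \ref{lem:distr} and, instead of absorbing the nonlinear term via Gronwall's inequality as in Proposition \ref{prop-gron}, use the superconvergence-type bound (\ref{eq:h1-l2}) to absorb the $L^2$ error directly into the $H^1$ seminorm on the left-hand side. This avoids the exponential blow-up in $T$ at the cost of a slightly smaller coefficient in front of $\vertiii{e_h}^2$.

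First I would start from (\ref{eqn:react-res}) and apply assumption \ref{cond1} together with the mean value theorem to obtain
\begin{equation*}
  -\inp{f(u)-f(u_h)}{e_h}{\Omega} \leq \alpha\norz{e_h}{\Omega}^2,
\end{equation*}
so that
\begin{equation*}
  \frac{1}{2}\der{}\norz{e_h}{\Omega}^2 + \noro{e_h}{\Omega}^2
  \leq C_{\hat{K}}\eta_t^2 + \alpha\norz{e_h}{\Omega}^2.
\end{equation*}
Next I would invoke (\ref{eq:h1-l2}) pointwise in $t$ to replace $\alpha\norz{e_h}{\Omega}^2$ by $\alpha C_{AN}N_T^{-1}\noro{e_h}{\Omega}^2$, and move the resulting term to the left:
\begin{equation*}
  \frac{1}{2}\der{}\norz{e_h}{\Omega}^2 + \bigl(1-\alpha C_{AN}N_T^{-1}\bigr)\noro{e_h}{\Omega}^2
  \leq C_{\hat{K}}\eta_t^2.
\end{equation*}

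Integrating this in $t$ over $[0,T]$ and using the fundamental theorem of calculus on the time-derivative term yields
\begin{equation*}
  \frac{1}{2}\norz{e_h(T)}{\Omega}^2 + \bigl(1-\alpha C_{AN}N_T^{-1}\bigr)\vertiii{e_h}^2
  \leq \frac{1}{2}\norz{e_h(0)}{\Omega}^2 + C_{\hat{K}}\eta^2.
\end{equation*}
Multiplying through by $2$ and relabeling $2C_{\hat{K}}$ as $C_{\hat{K}}$ gives the claim. The inequality (\ref{eq:h1-l2}) is the crux of the argument; once it is available, everything else is direct.

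I do not anticipate a substantial obstacle beyond carefully tracking the dependence of (\ref{eq:h1-l2}) on $t$, since it was derived under the standing superconvergence and equidistribution hypotheses and holds only up to higher-order terms. The estimate is only useful when $\alpha C_{AN}N_T^{-1}<1$, i.e.\ when the mesh is fine enough relative to the reaction stiffness; this is the natural regime in which the adaptive algorithm is expected to operate, and it is precisely the condition that makes the result sharper than the Gronwall-based bound (\ref{ineq-gron-exp}) for long time horizons.
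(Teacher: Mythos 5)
Your argument is exactly the paper's: the authors prove the proposition by integrating (\ref{eqn:react-gron1}) over $t$ and applying (\ref{eq:h1-l2}), which is what you do (applying (\ref{eq:h1-l2}) pointwise before integrating rather than after is immaterial, since (\ref{eq:h1-l2}) is assumed a.e.\ in $t$). The only cosmetic wrinkle is that multiplying by $2$ at the end doubles the coefficient of $\vertiii{e_h}^2$ as well, but since $2\bigl(1-\alpha C_{AN}N_T^{-1}\bigr)\geq 1-\alpha C_{AN}N_T^{-1}$ in the only regime where the bound is meaningful, the stated inequality follows and the proof is correct.
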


\begin{proof}

  Integrate (\ref{eqn:react-gron1}) over $t$ and apply (\ref{eq:h1-l2}).

\end{proof}

Ignoring the error on the initial condition $\norz{e_h(0)}{\Omega}$, estimate
(\ref{eqn:est-AN}) implies that as $N_T\rightarrow\infty$,
\begin{align*}
  \vertiii{e_h}
  \leq C_{\hat{K}}^{1/2}\left(1-\alpha
  C_{AN}N_T^{-1}\right)^{-1/2}\eta\rightarrow C_{\hat{K}}^{1/2}\eta.
\end{align*}
Therefore, we achieve an upper bound of the form
(\ref{ineq-upper}) asymptotically with respect to the mesh size. However, note
that the value of $\alpha$ can be large, and the constant $C_{AN}$ is not known
a priori, so it is not clear how fine the mesh needs to be so that
$1-\alpha C_{AN}N_T^{-1}>0.$

\begin{remark} \normalfont
  In general the constant $C_{AN}$ depends on the class of meshes considered.
  For isotropic meshes, where
  $\lambda_{1,K}\simeq\lambda_{2,K}\simeq h_K\simeq h$, the error is expected
  to converge $O(h^2)$ in the $L^2$ norm and $O(h)$ in the $H^1$ seminorm. Then
  from the relation $N_T\approx Ch^{-2}$, this translates to $O(N_T^{-1})$ for
  the $L^2$ norm and $O(N_T^{-1/2})$ for the $H^1$ seminorm, and we obtain
  (\ref{eq:h1-l2}).
\end{remark}

\begin{remark} \normalfont\label{rem:lipschitz} Propositions \ref{prop-gron} and
  \ref{prop-AN} were proven under the assumption of \ref{cond1}, which is used
  to prove estimate (\ref{eqn:react-gron1}). On the other hand, if $f$ satisfies
  \ref{cond2}, if we assume that $u\in L^\infty(0,T;L^\infty(\Omega))$ and
  moreover, that the collection $\{u_h\}_h$ is uniformly bounded in
  $L^\infty(0,T;L^\infty(\Omega)),$ then there exists a Lipschitz constant
  $C_f(u)>0$ for $f$ such that $|(f(u)-f(u_h))(e_h)|\leq C_f(u)(e_h)^2$ a.e.,
  and we obtain an analogue to (\ref{eqn:react-gron1}). The rest of the proof
  follows as before. The boundedness of $u$ is easy to prove, provided that the
  initial data is smooth enough and that the equation admits an invariant region
  \cite{Smoller-shock}. For uniform boundedness of the approximate solutions,
  see for instance \cite{estlarwil00}. If the solution blows up in finite time
  (in the $L^\infty$ sense), then the estimates can only be local in time.
\end{remark}


\subsection{Space-time discretization}
\label{subsec:gear}

\subsubsection{Scalar problem}
\label{subsubsec:gear-scal}

To simplify the presentation, for the remainder of the section we assume that
$g$ and $s$ in (\ref{eqn:react}) are both $0.$ Let $0=t_0<t_1<\cdots<t_N=T$,
with time steps $\tau_k=t_k-t_{k-1}$. We use the following notation for backward
finite difference formulas
\begin{align*}
  \partial_n^0u_h
  &=u_h^n,\\
  \partial_n^{k}u_h
  &=\frac{\partial_n^{k-1}u_h-\partial_{n-1}^{k-1}u_h}{\left(\frac{\tau_n+\ldots+\tau_{n-k+1}}{k}\right)},\quad k\geq1.
\end{align*}
We will also need the Newton polynomials of degree $1$ and $2$, given by
\begin{align*}
  &u_{h\Delta t}
  =u_h^n+(t-t_n)\partial_n^1 u_h,\quad\quad t\in[t_{n-1},t_n],\\
  &\tilde{u}_{h\Delta t}
  =u_h^n+(t-t_n)\partial_n^1 u_h +
  \frac{1}{2}(t-t_{n-1})(t-t_n)\partial_n^2 u_h,\quad\quad t\in[t_{n-2},t_n],
\end{align*}
that satisfy $u_{h\Delta t}(t_k)=u_h^k$ for $k=n-1,\,n$ and
$\tilde{u}_{h\Delta t}(t_k)=u_h^k$ for $k=n-2,\,n-1,\,n$. We define the Gear derivative
$\partial_n^Gu_h=\frac{\partial \tilde{u}_{h\Delta t}}{\partial t}(t_n)$, which
is a second-order accurate approximation of the first derivative. In practical
computation, it is a two step approximation of the following form:
\begin{align*}
  \partial_n^Gu_h
  &=\frac{1}{\tau_n}\left[\frac{1+2\gamma_n}{1+\gamma_n}u_h^n-(1+\gamma_n)u_h^{n-1}+\frac{\gamma_n^2}{1+\gamma_n}u_h^{n-2}\right],
\end{align*}
where $\gamma_n=\frac{\tau_n}{\tau_{n-1}}$ is the step-size ratio. Denote by
$\pi_h:C(\overline{\Omega})\cap H^1(\Omega)\rightarrow V_h$ the Lagrange
interpolation operator. The variable time step BDF2 method starting with
backward Euler is defined as follows: find $\{u_h^n\}_n\in V_h$ that solve
\begin{equation}\label{eq:bdf2-solve}
  \left\{\begin{array}{ll}
  u_h^0=\pi_h(u_0)\\
  \inp{\partial_1^1u_h}{v_h}{\Omega}+\inp{\nabla u_h^1}{\nabla v_h}{\Omega}
           +\inp{f(u_h^1)}{v_h}{\Omega}=0,\\
  \inp{\partial_n^Gu_h}{v_h}{\Omega}+\inp{\nabla u_h^n}{\nabla v_h}{\Omega}
  +\inp{f(u_h^n)}{v_h}{\Omega}=0,&\quad n\geq 2.
  \end{array}\right.
\end{equation}
As with constant step BDF2, the method is second-order accurate in time. It is
shown in \cite{bec98} that the method is stable for linear problems provided
$\gamma_n\leq (2+\sqrt{13})/3\approx 1.86$, and that the constant
$\Gamma_N=\sum_{n=2}^{N-2}[\gamma_n-\gamma_{n+2}]_+$ remains bounded, where
$[\cdot]_+$ denotes the non-negative part. In \cite{emm04}, this result is
extended to semilinear parabolic problems provided
$\gamma_n\leq \gamma_{max}\approx 1.910$, and provided $\tau_n\leq\tau_{max}$,
where $\tau_{max}$ depends only on the nonlinear term $f$. For the purposes of
implementing adaptive step-size control, these restrictions are not too
severe. The maximum for the step-size ratio allows the step size to increase by
an order of magnitude in three steps, a rapid transition. Furthermore, the
boundedness of $\Gamma_N$ will hold provided the variation in $\gamma_n$ is not
too erratic.


For the next lemma we need a linear reconstruction of $f(u_h)$:
\begin{align*}
  I_1(f(u_h))(t)
  &=f(u_h^n)+\frac{t-t_n}{\tau_n}(f(u_h^n)-f(u_h^{n-1})).
\end{align*}
For convenience, we will define the function
$\overline{\partial}^3u_h:[0,T]\rightarrow V_h$ to be $0$ for $t\leq t_2$ and
otherwise to be $\partial_n^3u_h$ on $(t_{n-1},t_n]$. The following lemma
extends results from \cite{akrcha10}, where only a constant step size was
considered.

\begin{lem}\label{lem-res}
  For $v_h\in V_h$ and $t\in(t_{n-1},t_n]$ with $n\geq3$
  \begin{align*}
    \inp{\derpar{\tilde{u}_{h\Delta t}}{t}}{v_h}{\Omega}
    +\inp{\nabla u_{h\Delta t}}{\nabla v_h}{\Omega}
    &=-\inp{I_1(f(u_h))}{v_h}{\Omega}-Q_n(t)\inp{\partial_n^3u_h}{v_h}{\Omega},
  \end{align*}
  where
  $Q_n(t)=\frac{\tau_{n-1}(\tau_n+\tau_{n-1}+\tau_{n-2})}{6\tau_n}(t-t_n).$
  Furthermore, if the collection $\left\{\overline{\partial}^3u_h\right\}_h$ is
  uniformly bounded in $L^2(0,T;H)$, then
  $\|Q_n(t)\partial_n^3u_h\|_{L^2(0,T;H)}$ is order $O(\tau_*^2)$ where
  $\tau_*=\max_{n\geq2}\tau_n.$
\end{lem}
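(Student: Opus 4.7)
The plan is to interpret $\partial_t\tilde{u}_{h\Delta t}(t)$ as a linear polynomial in $t$ and, using the barycentric coordinates $\lambda_n(t)=(t-t_{n-1})/\tau_n$ and $\lambda_{n-1}(t)=(t_n-t)/\tau_n$, to combine the BDF2 equations (\ref{eq:bdf2-solve}) at $t_n$ and $t_{n-1}$ with these weights. Under that convex combination, the diffusion term directly assembles into $\inp{\nabla u_{h\Delta t}}{\nabla v_h}{\Omega}$ and the reaction term into $\inp{I_1(f(u_h))}{v_h}{\Omega}$, by the definitions of $u_{h\Delta t}$ and $I_1$. The remaining task is to match $\lambda_{n-1}(t)\partial_{n-1}^G u_h + \lambda_n(t)\partial_n^G u_h$ against $\partial_t\tilde{u}_{h\Delta t}(t)$.

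First I would differentiate the Newton form to obtain the endpoint values $\partial_t\tilde{u}_{h\Delta t}(t_n) = \partial_n^1 u_h + \tfrac{\tau_n}{2}\partial_n^2 u_h = \partial_n^G u_h$ and $\partial_t\tilde{u}_{h\Delta t}(t_{n-1}) = \partial_n^1 u_h - \tfrac{\tau_n}{2}\partial_n^2 u_h$. Since $\partial_t\tilde{u}_{h\Delta t}$ is affine in $t$, it equals the linear interpolant of these values. The second one does \emph{not} coincide with $\partial_{n-1}^G u_h$, so I would compute the discrepancy $\partial_t\tilde{u}_{h\Delta t}(t_{n-1})-\partial_{n-1}^G u_h$. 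Using the two divided-difference identities $\partial_n^1 u_h - \partial_{n-1}^1 u_h = \tfrac{\tau_n+\tau_{n-1}}{2}\partial_n^2 u_h$ and $\partial_n^2 u_h - \partial_{n-1}^2 u_h = \tfrac{\tau_n+\tau_{n-1}+\tau_{n-2}}{3}\partial_n^3 u_h$, the discrepancy collapses to $\tfrac{\tau_{n-1}(\tau_n+\tau_{n-1}+\tau_{n-2})}{6}\partial_n^3 u_h$. Multiplying by $\lambda_{n-1}(t) = -(t-t_n)/\tau_n$ gives exactly $-Q_n(t)\partial_n^3 u_h$.

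Putting these pieces together yields
\[
\lambda_{n-1}(t)\partial_{n-1}^G u_h + \lambda_n(t)\partial_n^G u_h \;=\; \partial_t\tilde{u}_{h\Delta t}(t) + Q_n(t)\partial_n^3 u_h,
\]
and substituting into the $\lambda_{n-1}$- and $\lambda_n$-weighted sum of the BDF2 equations produces the stated identity after rearranging. The hypothesis $n\geq 3$ is needed so that the BDF2 equation at $t_{n-1}$, whose three-step stencil involves $u_h^{n-3}$, is available.

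For the order estimate, a pointwise bound on $Q_n$ suffices: on $(t_{n-1},t_n]$ one has $|t-t_n|\leq\tau_n$, so the $\tau_n$ in the denominator cancels and $|Q_n(t)|\leq \tfrac{\tau_{n-1}(\tau_n+\tau_{n-1}+\tau_{n-2})}{6}\leq\tfrac{\tau_*^2}{2}$, with no step-ratio assumption required. Then
\[
\|Q_n\,\partial_n^3 u_h\|_{L^2(0,T;H)}^2 = \sum_{n\geq 3}\int_{t_{n-1}}^{t_n}|Q_n(t)|^2\,\|\partial_n^3 u_h\|_H^2\,\mathrm{d}t \;\leq\; \tfrac{\tau_*^4}{4}\,\|\overline{\partial}^3 u_h\|_{L^2(0,T;H)}^2,
\]
which is $O(\tau_*^4)$ by the boundedness hypothesis, yielding the claimed $O(\tau_*^2)$ for the norm. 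The main obstacle is the bookkeeping in the second step: tracking signs and coefficients through the recursive definitions of $\partial_n^k u_h$ carefully enough to land on the precise factor of $1/6$ and the correct sign in the definition of $Q_n$.
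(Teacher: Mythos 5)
Your proof is correct and follows essentially the same route as the paper's: both combine the BDF2 variational equalities at $t_{n-1}$ and $t_n$ with the affine weights $\lambda_{n-1}(t),\lambda_n(t)$ (which is what produces $\nabla u_{h\Delta t}$ and $I_1(f(u_h))$), and both reduce the mismatch between the weighted Gear derivatives and $\derpar{\tilde{u}_{h\Delta t}}{t}$ to $-Q_n(t)\partial_n^3u_h$ via the same two divided-difference identities. The only difference is presentational --- you match the two affine functions at the endpoints $t_{n-1},t_n$ instead of manipulating $\partial_n^Gu_h+(t-t_n)\partial_n^2u_h$ directly --- and your pointwise bound on $Q_n$ yields the same $O(\tau_*^2)$ conclusion as the paper's explicit integral.
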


\begin{proof}

  If $n\geq 3$, we can apply the two step variational equality for $u_h^{n-1}$
  and $u_h^n$ so that
  \begin{align*}
    &\inp{\nabla u_{h\Delta t}}{\nabla v_h}{\Omega}\\
    &=-\inp{I_1f(u_h)}{v_h}{\Omega}
      -\inp{\partial_n^Gu_h+\frac{t-t_n}{\tau_n}(\partial_n^Gu_h-\partial_{n-1}^Gu_h)}{v_h}{\Omega}.
  \end{align*}
  Then combined with the relation $\derpar{\tilde{u}_{h\Delta t}}{t}=\partial_n^G
  u_h+(t-t_n)\partial_n^2 u_h$
  \begin{align*}
    &\inp{\derpar{\tilde{u}_{h\Delta t}}{t}}{v_h}{\Omega} + \inp{\nabla u_{h\Delta t}}{\nabla v_h}{\Omega}\\
    &=-\inp{I_1f(u_h)}{v_h}{\Omega}
      + (t-t_n)\inp{\partial_n^2u_h-\frac{1}{\tau_n}(\partial_n^Gu_h-\partial_{n-1}^Gu_h)}{v_h}{\Omega}.
  \end{align*}
  Since
  $\partial_n^Gu_h=\partial_n^1u_h+\frac{\tau_n}{2}\partial_n^2u_h$, we get
  \begin{align*}
    \partial_n^2u_h-\frac{1}{\tau_n}(\partial_n^Gu_h-\partial_{n-1}^Gu_h)
    &=\partial_n^2u_h-\frac{1}{\tau_n}\left(\frac{\tau_n+\tau_{n-1}}{2}\partial_n^2u_h+\frac{\tau_n}{2}\partial_{n}^2u_h-\frac{\tau_{n-1}}{2}\partial_{n-1}^2u_h\right)\\
    &=-\frac{\tau_{n-1}}{2\tau_n}(\partial_{n}^2u_h-\partial_{n-1}^2u_h)\\
    &=-\frac{\tau_{n-1}(\tau_n+\tau_{n-1}+\tau_{n-2})}{6\tau_n}\partial_{n}^3u_h
  \end{align*}
  and we conclude the result. Finally, for the order of convergence we note that
  \begin{align}\label{eqn-conv-third}
    &\int_{t_{n-1}}^{t_n}Q_n(t)^2\norz{\partial_n^3u_h}{\Omega}^2\,\mathrm{d}t\nonumber\\
    &=\frac{1}{108}\tau_n\tau_{n-1}^2\left(\tau_n+\tau_{n-1}+\tau_{n-2}\right)^2\norz{\partial_n^3u_h}{\Omega}^2\nonumber\\
    &\leq C\tau_*^4\|\partial_n^3u_h\|_{L^2(t_{n-1},t_n;H)}^2.
  \end{align}

\end{proof}

\begin{remark} \normalfont In \cite{akrcha10} it is shown that the uniform
  boundedness assumption of $\overline{\partial}^3u_h$ need not hold. The
  optimality of the order of convergence of the estimator will be addressed in
  Section \ref{sec:numer-react} for numerical examples.
\end{remark}

For convenience, we will denote by $p_n=p_n(\tau_n,\tau_{n-1},\tau_{n-2})$ the
coefficient for $\norz{\partial_n^3u_h}{\Omega}^2$ appearing in the second line
of (\ref{eqn-conv-third}).


The techniques used in the proof of Theorem \ref{theorem-bdf2} are essentially
from \cite[Theorem 4.4]{lozpicpra09}, which is an estimate for the heat equation
solved with Crank-Nicolson. We include the proof for the sake of
completeness. In what follows, we denote $e_h(t)=u-u_{h\Delta t}$ and
$\tilde{e}_h(t)=u-\tilde{u}_{h\Delta t}$. For the fully discrete problem, the
element residual defined on $(t_{n-1},t_n)\times K$ is
\begin{align*}
  R^S_{K,n}(u_h)
  &=f(\tilde{u}_{h\Delta
    t})+\partial_n^Gu_h-\Delta u_{h\Delta
    t},
\end{align*}
while the edge residual defined on $(t_{n-1},t_n)\times \partial K$  is
\begin{align*}
  r^S_{K,n}(u_h)
  &=[\nabla u_{h\Delta t}].
\end{align*}

\begin{theo}\label{theorem-bdf2}
  Suppose that \emph{(\ref{eq:h1-l2})} holds. There exists a constant $C_{\hat{K}}>0$
  independent of the mesh and step size such that for $n\geq 3$
\begin{align}\label{bdf2est-full}
  &\norz{e_h(t_n)}{\Omega}^2
  + \int_{t_{n-1}}^{t_n}\noro{e_h(t)}{\Omega}^2\,\mathrm{d}t\leq \norz{e_h(t_{n-1})}{\Omega}^2\nonumber\\
  & + C_{\hat{K}}\left(\sum_K\int_{t_{n-1}}^{t_n}\left(\|R^S_{K,n}(u_h)\|_{0,K}
  +\frac{1}{2}\left(\frac{h_K}{\lambda_{1,K}\lambda_{2,K}}\right)^{1/2}\|r^S_{K,n}(u_h)\|_{0,\partial K}\right)\tilde{\omega}_K(\tilde{e}_h)\,\mathrm{d}t\right.\nonumber\\
  &\left.+ \sum_K\lambda_{2,K}^2\tau_n^3\norz{\partial_n^2u_h}{K}^2
  + p_n\norz{\partial_n^3u_h}{\Omega}^2
    + \frac{1}{120}\tau_n^5\noro{\partial_n^2u_h}{\Omega}^2\right.\nonumber\\
  &\left.+\int_{t_{n-1}}^{t_n}\|f(\tilde{u}_{h\Delta
    t})-I_1(f(u_h))\|_{0,\Omega}^2\,\mathrm{d}t\right).
\end{align}
\end{theo}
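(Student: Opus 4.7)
The proof will extend the strategy of \cite[Theorem 4.4]{lozpicpra09} for Crank--Nicolson to the nonlinear BDF2 setting, with Lemma \ref{lem-res} providing the discrete Galerkin identity. I would begin by testing the continuous formulation (\ref{eqn:react-var}) with $v = \tilde{e}_h$, testing the identity of Lemma \ref{lem-res} with $v_h = I_h(\tilde{e}_h)$, and subtracting. Because Lemma \ref{lem-res} couples $\partial_t \tilde{u}_{h\Delta t}$ with $\nabla u_{h\Delta t}$, the left-hand side produces $\tfrac{1}{2}\der{}\norz{\tilde{e}_h}{\Omega}^2 + \inp{\nabla e_h}{\nabla \tilde{e}_h}{\Omega}$, and splitting $\nabla \tilde{e}_h = \nabla e_h + \nabla(\tilde{u}_{h\Delta t} - u_{h\Delta t})$ produces $\noro{e_h}{\Omega}^2$ (the desired pointwise version of the left-hand side of (\ref{bdf2est-full})) plus a cross term $\inp{\nabla e_h}{\nabla(\tilde{u}_{h\Delta t} - u_{h\Delta t})}{\Omega}$ that is moved to the right.

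I would then apply elementwise integration by parts to the residual portion of the right-hand side, using $\partial_t \tilde{u}_{h\Delta t} = \partial_n^G u_h + (t-t_n)\partial_n^2 u_h$ to identify the element residual $R^S_{K,n}(u_h)$ and jump $r^S_{K,n}(u_h)$. This splits the bound into three groups: (i) the residual pairings $\inp{R^S_{K,n}(u_h)}{\tilde{e}_h - I_h(\tilde{e}_h)}{K}$ and their edge analogues, to which the anisotropic interpolation estimates of \cite{forper01, forper03} applied to $\tilde{e}_h$ yield the first $\tilde{\omega}_K(\tilde{e}_h)$ contribution in (\ref{bdf2est-full}); (ii) three time-reconstruction terms $Q_n(t)\inp{\partial_n^3 u_h}{I_h(\tilde{e}_h)}{\Omega}$, $(t-t_n)\inp{\partial_n^2 u_h}{\tilde{e}_h - I_h(\tilde{e}_h)}{\Omega}$, and the cross term carried over from the previous paragraph, bounded via Cauchy--Schwarz and Young's inequality together with the explicit time integrals $\int_{t_{n-1}}^{t_n} Q_n^2\,\mathrm{d}t = p_n$, $\int_{t_{n-1}}^{t_n}(t-t_n)^2\,\mathrm{d}t \sim \tau_n^3$, and $\int_{t_{n-1}}^{t_n}((t-t_{n-1})(t-t_n)/2)^2\,\mathrm{d}t = \tau_n^5/120$, producing the remaining time-step terms in the statement; and (iii) the reaction splitting $f(u) - I_1(f(u_h)) = [f(u) - f(\tilde{u}_{h\Delta t})] + [f(\tilde{u}_{h\Delta t}) - I_1(f(u_h))]$, in which the quadrature summand gives $\int \|f(\tilde{u}_{h\Delta t}) - I_1(f(u_h))\|_{0,\Omega}^2\,\mathrm{d}t$ by Cauchy--Schwarz and Young's inequality.

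The principal difficulty is the remaining reaction summand $-\inp{f(u) - f(\tilde{u}_{h\Delta t})}{\tilde{e}_h}{\Omega}$: assumption \ref{cond1} bounds it by $\alpha\norz{\tilde{e}_h}{\Omega}^2$ as in the proof of Proposition \ref{prop-gron}, but this must be absorbed by $\noro{e_h}{\Omega}^2$ on the left, since no pointwise $\norz{e_h}{\Omega}^2$ factor appears in (\ref{bdf2est-full}). A triangle inequality $\norz{\tilde{e}_h}{\Omega} \leq \norz{e_h}{\Omega} + \norz{\tilde{u}_{h\Delta t} - u_{h\Delta t}}{\Omega}$ combined with assumption (\ref{eq:h1-l2}) converts $\alpha\norz{e_h}{\Omega}^2$ into a fraction of $\noro{e_h}{\Omega}^2$, absorbable on the left in the large-$N_T$ regime of Proposition \ref{prop-AN}, while the residual $\alpha\norz{\tilde{u}_{h\Delta t} - u_{h\Delta t}}{\Omega}^2$ is of higher order in $\tau_n$ and folds into the $\tau_n^3$ or $\tau_n^5$ terms on the right. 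Integrating from $t_{n-1}$ to $t_n$ and using the identities $e_h(t_k) = \tilde{e}_h(t_k)$ for $k \in \{n-1, n\}$ returns the stated endpoint norms in $e_h$. Beyond this absorption, the main bookkeeping challenge is to keep the three reconstructions $u_{h\Delta t}$, $\tilde{u}_{h\Delta t}$, and $\partial_n^G u_h$ distinct throughout and to balance Young's-inequality parameters so the coefficient of $\int_{t_{n-1}}^{t_n}\noro{e_h}{\Omega}^2\,\mathrm{d}t$ on the left stays at least $1$.
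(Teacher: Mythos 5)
Your proposal is correct and follows essentially the same route as the paper's proof: the identity of Lemma \ref{lem-res} tested against $I_h(\tilde e_h)$ while the continuous formulation is tested against $\tilde e_h$, elementwise integration by parts to expose $R^S_{K,n}$ and $r^S_{K,n}$, the anisotropic interpolation estimates of \cite{forper01,forper03}, the two-way splitting of the reaction term, the explicit time integrals giving $p_n$ and $\tau_n^5/120$, and absorption of the $\alpha\norz{\tilde e_h}{\Omega}^2$ remainder using (\ref{eq:h1-l2}) in the large-$N_T$ regime. The one localized difference is in handling $\inp{\nabla e_h}{\nabla\tilde e_h}{\Omega}$: the paper uses the polarization identity $ab=\tfrac12a^2+\tfrac12b^2-\tfrac12(a-b)^2$, which places an extra $\tfrac12\noro{\tilde e_h}{\Omega}^2$ on the left-hand side against which all the Young-inequality remainders in $\tilde e_h$ are absorbed directly, the computable deficit $\tfrac14(t-t_n)^2(t-t_{n-1})^2\noro{\partial_n^2u_h}{\Omega}^2$ integrating to the $\tau_n^5/120$ term, whereas your linear split plus triangle inequalities routes every absorption through $\noro{e_h}{\Omega}^2$ at the cost of additional higher-order terms in $\tau_n$ --- both work. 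One detail you gloss over: to obtain the localized $\sum_K\lambda_{2,K}^2\tau_n^3\norz{\partial_n^2u_h}{K}^2$ term from the $(t-t_n)\partial_n^2u_h$ pairing with $\tilde e_h-I_h(\tilde e_h)$, Cauchy--Schwarz and Young alone do not produce the $\lambda_{2,K}^2$ weight; the paper invokes the additional assumption, borrowed from \cite{lozpicpra09}, that $\tilde\omega_K(\tilde e_h)\leq C_{eq}\lambda_{2,K}|\tilde e_h|_{1,\Omega}$, and your write-up should state this explicitly.
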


\begin{proof}

For $v\in V$, using the variational formulation for $u$
\begin{align}\label{eq-gear1}
  &\inp{\derpar{\tilde{e}_{h}}{t}}{v}{\Omega} + \inp{\nabla e_h}{\nabla v}{\Omega}\nonumber\\
  &=-\inp{\derpar{\tilde{u}_{h\Delta t}}{t}}{v}{\Omega} - \inp{\nabla u_{h\Delta
      t}}{\nabla v}{\Omega} - \inp{f(u)}{v}{\Omega}\nonumber\\
  &=-\inp{\derpar{\tilde{u}_{h\Delta t}}{t}+f(\tilde{u}_{h\Delta t})}{v}{\Omega}
  - \inp{\nabla u_{h\Delta t}}{\nabla v}{\Omega}
  - \inp{f(u)-f(\tilde{u}_{h\Delta t})}{v}{\Omega}.
\end{align}
For $v_h\in V_{h},$ we apply Lemma \ref{lem-res} to conclude
\begin{align}\label{eq-integ-full}
  &\inp{\derpar{\tilde{e}_{h}}{t}}{v}{\Omega} + \inp{\nabla e_h}{\nabla
    v}{\Omega}\nonumber\\
  &=-\inp{\derpar{\tilde{u}_{h\Delta t}}{t} + f(\tilde{u}_{h\Delta t})}{v-v_h}{\Omega} - \inp{\nabla u_{h\Delta
    t}}{\nabla
    (v-v_h)}{\Omega}+Q_n(t)\inp{\partial_n^3u_h}{v_h}{\Omega}\nonumber\\
  &-\inp{f(u)-f(\tilde{u}_{h\Delta t})}{v}{\Omega}-\inp{f(\tilde{u}_{h\Delta
    t})-I_1(f(u_h))}{v_h}{\Omega}.
\end{align}
Choose $v=\tilde{e}_h,\,v_h=I_h(\tilde{e}_h).$ Note that
\begin{align}\label{eq-pars}
  \inp{\nabla e_h}{\nabla\tilde{e}_h}{\Omega}
  &=\frac{1}{2}\noro{e_h}{\Omega}^2+\frac{1}{2}\noro{\tilde{e}_h}{\Omega}^2 -
    \frac{1}{2}\noro{e_{h} - \tilde{e}_{h}}{\Omega}^2\nonumber\\
  &=\frac{1}{2}\noro{e_h}{\Omega}^2+\frac{1}{2}\noro{\tilde{e}_h}{\Omega}^2 - \frac{1}{2}\noro{\tilde{u}_{h\Delta t} - u_{h\Delta t}}{\Omega}^2\nonumber\\
  &=\frac{1}{2}\noro{e_h}{\Omega}^2+\frac{1}{2}\noro{\tilde{e}_h}{\Omega}^2 -
    \frac{1}{4}(t-t_n)^2(t-t_{n-1})^2\noro{\partial_n^2u_h}{\Omega}^2,
\end{align}
so substituting (\ref{eq-pars}) into (\ref{eq-integ-full}), applying integration
by parts over each element $K,$ and integrating from $t_{n-1}$ to $t_n$, and
applying Cauchy-Bunyakowsky-Schwartz we get
\begin{align}\label{ineq-var-full}
  &\frac{1}{2}\norz{e_h(t_n)}{\Omega}^2-\frac{1}{2}\norz{e_h(t_{n-1})}{\Omega}^2
    + \int_{t_{n-1}}^{t_n}\frac{1}{2}\left(\noro{e_h(t)}{\Omega}^2+\noro{\tilde{e}_h(t)}{\Omega}^2\right)\,\mathrm{d}t\nonumber\\
  &\leq\underbrace{\sum_K\int_{t_{n-1}}^{t_n}\norz{f(\tilde{u}_{h\Delta
    t})+\partial_n^Gu_h-\Delta u_{h\Delta
    t}}{K}\norz{\tilde{e}_h-I_h(\tilde{e}_h)}{K}\,\mathrm{d}t}_\text{I}\nonumber\\
  &+ \sum_K\int_{t_{n-1}}^{t_n}\underbrace{\frac{1}{2}\norz{[\nabla u_{h\Delta
    t}]}{\partial K}\norz{\tilde{e}_h-I_h(\tilde{e}_h)}{\partial
    K}\,\mathrm{d}t}_\text{II} + \frac{1}{120}\tau_n^5\noro{\partial_n^2u_h}{\Omega}^2\nonumber\\
  &+\underbrace{\sum_K\int_{t_{n-1}}^{t_n}|t-t_n|\norz{\partial_n^2u_h}{K}\norz{\tilde{e}_h-I_h(\tilde{e}_h)}{K}\,\mathrm{d}t}_\text{III}
    \nonumber\\
  &+\underbrace{\int_{t_{n-1}}^{t_n}\norz{Q_n(t)\partial_n^3u_h}{\Omega}\norz{I_h(\tilde{e}_h)}{\Omega}\,\mathrm{d}t}_\text{IV}\nonumber\\
  &-\underbrace{\int_{t_{n-1}}^{t_n}\inp{f(u)-f(\tilde{u}_{h\Delta t})}{\tilde{e}_h}{\Omega}\,\mathrm{d}t}_\text{V}-\underbrace{\int_{t_{n-1}}^{t_n}\inp{f(\tilde{u}_{h\Delta
    t})-I_1(f(u_h))}{I_h(\tilde{e}_h)}{\Omega}\,\mathrm{d}t}_\text{VI}
\end{align}
(Note that $e_h=\tilde{e}_h$ at $t=t_n,\,t_{n-1}$).

We will deal with each term of (\ref{ineq-var-full}). Applying the interpolation
estimates we get
\begin{align*}
  \text{I}+\text{II}
  &\leq \int_{t_{n-1}}^{t_n}C_{\hat{K}}\sum_K\left(\|R^S_{K,n}(u_h)\|_{0,K}
    +\frac{1}{2}\left(\frac{h_K}{\lambda_{1,K}\lambda_{2,K}}\right)^{1/2}\|r^S_{K,n}(u_h)\|_{0,\partial K}\right)\tilde{\omega}_K(\tilde{e}_h)\,\mathrm{d}t.
\end{align*}
As in \cite{lozpicpra09}, we assume there exists $C_{eq}>0$ independent of the
mesh such that
$\tilde{\omega}_K(\tilde{e}_h)\leq C_{eq}\lambda_{2,K}|\tilde{e}_h|_{1,\Omega}$. Then applying the interpolation estimates and Young's
inequality, for any $\gamma>0$
\begin{align*}
  \text{III}
  &\leq \sum_K
    \frac{\gamma}{2}\int_{t_{n-1}}^{t_n}\lambda_{2,K}^2(t-t_n)^2\norz{\partial_n^2u_h}{K}^2\,\mathrm{d}t
    +\frac{C_{\hat{K}}^2C_{eq}^2}{2\gamma}\int_{t_{n-1}}^{t_n}\noro{\tilde{e}_h}{\Omega}^2\,\mathrm{d}t\\
  &\leq \sum_K
    \frac{\gamma}{6}\lambda_{2,K}^2\tau_n^3\norz{\partial_n^2u_h}{K}^2
    +\frac{C_{\hat{K}}^2C_{eq}^2}{2\gamma}\int_{t_{n-1}}^{t_n}\noro{\tilde{e}_h}{\Omega}^2\,\mathrm{d}t.
\end{align*}
Let $C_{I_h}>0$ be a constant such that for all $v\in H^1(\Omega),$
$\|I_h(v)\|_{0,\Omega}\leq C_{I_h}\|v\|_{0,\Omega}$. Then
\begin{align*}
  \text{IV}
  &\leq
    \frac{\gamma}{2}p_n\norz{\partial_n^3u_h}{\Omega}^2
    +\frac{C_{I_h}^2}{2\gamma}\int_{t_{n-1}}^{t_n}\norz{\tilde{e}_h}{\Omega}^2\,\mathrm{d}t.
\end{align*}
Assuming \ref{cond1} we get
\begin{align*}
  \text{V}
  &\leq\alpha\int_{t_{n-1}}^{t_n}\norz{\tilde{e}_h}{\Omega}^2\,\mathrm{d}t.
\end{align*}
Finally,
\begin{align*}
  \text{VI}
  &\leq \frac{\gamma}{2}\int_{t_{n-1}}^{t_n}\|f(\tilde{u}_{h\Delta
    t})-I_1(f(u_h))\|_{0,\Omega}^2\,\mathrm{d}t+\frac{C_{I_h}^2}{2\gamma}\int_{t_{n-1}}^{t_n}\norz{\tilde{e}_h}{\Omega}^2\,\mathrm{d}t\\
\end{align*}
If we apply assumption (\ref{eq:h1-l2}) and collect the coefficients to
$\int_{t_{n-1}}^{t_n}\noro{\tilde{e}_h}{\Omega}^2\,\mathrm{d}t$ on the left
side we have
\begin{align*}
  \frac{1}{2}-\frac{C_{\hat{K}}^2C_{eq}^2}{2\gamma}-\frac{C_{AN}}{N_T}\left(\alpha+\frac{C_{I_h}^2}{2\gamma}\right).
\end{align*}
This can be made non-negative, for instance choosing
$\gamma\geq\max\{2C_{\hat{K}}^2C_{eq}^2,C_{I_h}^2\}$ and provided
$N_T\geq 4C_{AN}(\alpha+\frac{1}{2})$. The proof is then completed by gathering
all terms in (\ref{ineq-var-full}).

\end{proof}


Note that the right hand side of the inequality in Theorem \ref{theorem-bdf2}
still depends on the unknown $\nabla u$. However, as is done for instance in
\cite{pic03a}, we let $\Pi_h:V_h\rightarrow V_h\times V_h$ denote a gradient
recovery operator. In general, we expect $\Pi_h(u_h)$ to converge faster than
$\nabla u_h$. In this paper, we found it sufficed to apply the simplified
Zienkiewicz-Zhu operator studied in \cite{rod94}.  We have the following
estimator for the error in space
\begin{align}
  \eta_{K,n}^{S}
  &=\left(\int_{t_{n-1}}^{t_n}\left(\|R^S_{K,n}(u_h)\|_{0,K}
  +\frac{1}{2}\left(\frac{h_K}{\lambda_{1,K}\lambda_{2,K}}\right)^{1/2}\|r^S_{K,n}(u_h)\|_{0,\partial K}\right)\omega_K(\tilde{u}_{h\Delta
    t})\,\mathrm{d}t\right)^{1/2},\label{bdf-est-space}
\end{align}
where for $v_h\in V_h$
\begin{align}
  \omega_K(v_h)&=\left(\lambda_{1,K}^2r_{1,K}^T
                 G_K(v_h)r_{1,K}+\lambda_{2,K}^2r_{2,K}^T
                 G_K(v_h)r_{2,K}\right)^{1/2},\label{omega-recovered}\\
  G_K(v_h)&=
  \left(\int_{\Delta_K}\left(\frac{\partial v_h}{\partial x_i}-\Pi_h(v_h)_i\right)\left(\frac{\partial v_h}{\partial
    x_j}-\Pi_h(v_h)_j\right)\,\,\mathrm{d}x\right)_{i,j}.
\end{align}
The estimator for the error in time is given by
\begin{align}\label{bdf-est-time}
  \eta_n^{T}
  &=\left((\eta_n^{1,T})^2+(\eta_n^{2,T})^2+(\eta_n^{3,T})^2+(\eta_n^{4,T})^2\right)^{1/2},
\end{align}
where
\begin{align}\label{bdf-est-time-terms}
  \eta_n^{1,T}&=\left(\frac{1}{120}\tau_n^5\noro{\partial_n^2u_h}{\Omega}^2\right)^{1/2},
                \quad\quad\eta_n^{2,T}=\left(\frac{1}{12}\sum_K\lambda_{2,K}^2\tau_n^3\norz{\partial_n^2u_h}{K}^2\right)^{1/2},\nonumber\\
  \eta_n^{3,T}&=\left(p_n\norz{\partial_n^3u_h}{\Omega}^2\right)^{1/2},
                \quad\quad\eta_n^{4,T}=\left(\int_{t_{n-1}}^{t_n}\|f(\tilde{u}_{h\Delta
                t})-I_1(f(u_h))\|_{0,\Omega}^2\,\mathrm{d}t\right)^{1/2}.
\end{align}


\subsubsection{Monodomain problem}
\label{sec:mono-est}

We retain the notation from Section \ref{subsubsec:gear-scal}. For $n\geq2$, the
fully discrete approximations $u_h^n,\,w_h^n$ to (\ref{eqn:mono}) are obtained
by solving the variational problem
\begin{equation}\label{eq:beul-mono-solve}
  \left\{
    \begin{array}{ll}
      \inp{\partial_n^G u_h}{\phi_h}{\Omega}+\inp{\nabla u_h^n}{\nabla \phi_h}{\Omega}
      +\inp{F(u_h^n,w_h^n)}{\phi_h}{\Omega}=0,\\
      \inp{\partial_n^G w_h}{\psi_h}{\Omega}
      +\inp{G(u_h^n,w_h^n)}{\psi_h}{\Omega}=0,
    \end{array}\right.
\end{equation}
for all $\phi_h,\,\psi_h\in V_h$. The solution for $n=1$ is obtained by the
backward Euler method.

Define the element residuals
\begin{align*}
  R_{K,n}^U
  &=\derpar{\tilde{u}_{h\Delta t}}{t} - \Delta u_{h\Delta t} + F(\tilde{u}_h,\tilde{w}_h),\\
  R_{K,n}^W
  &=\derpar{\tilde{w}_{h\Delta t}}{t} + G(\tilde{u}_h,\tilde{w}_h),
\end{align*}
and edge residual
\begin{equation*}
  r_{K,n}^U=[\nabla u_{h\Delta t}].
\end{equation*}
Define $e_u=u-u_{h\Delta t},$ $\tilde{e}_u=u-\tilde{u}_{h\Delta t}$ and
similarly define $e_w$ and $\tilde{e}_w$. Define the local space estimators
\begin{align}
  \tilde{\eta}_{K,n}^{S,U}(e_u)
  &=\left(\int_{t_{n-1}}^{t_n}\left(\|R_{K,n}^U\|_{0,K} +
    \left(\frac{h_K}{\lambda_{1,K}\lambda_{2,K}}\right)^{1/2}\|r_{K,n}^U\|_{0,\partial
    K}\right)\tilde{\omega}_K(\tilde{e}_u)
    \,\mathrm{d}t\right)^{1/2},\nonumber\\
  \tilde{\eta}_{n,K}^{S,W}(e_w)
  &=\left(\int_{t_{n-1}}^{t_n}\|R_{K,n}^W\|_{0,K}\tilde{\omega}_K(\tilde{e}_w)
    \,\mathrm{d}t\right)^{1/2}\label{eta-mono-tilde-K},
\end{align}
and global space estimators
\begin{align}\label{eta-mono-tilde}
  \tilde{\eta}_{n}^{S,U}
  =\left(\sum_K(\tilde{\eta}_{K,n}^{S,U})^2\right)^{1/2},
  \quad\quad
  \tilde{\eta}_{n}^{S,W}
  =\left(\sum_K(\tilde{\eta}_{K,n}^{S,W})^2\right)^{1/2}.
\end{align}
Define the time estimators
\begin{align}\label{eta-mono-terms}
  \eta_n^{T,U}
  &=\left((\eta_n^{1,T,U})^2 + (\eta_n^{3,T,U})^2 + (\eta_n^{4,T,U})^2\right)^{1/2},\nonumber\\
  \eta_n^{T,W}
  &=\left((\eta_n^{3,T,W})^2 + (\eta_n^{4,T,W})^2\right)^{1/2},
\end{align}
where the terms appearing in (\ref{eta-mono-terms}) are defined analogous to
those in (\ref{bdf-est-time-terms}). Note that there is no corresponding
estimator for $\eta^{1,T,U}$ for $w$ since there is no Laplacian operator. Also,
note that in (\ref{eta-mono-terms}) there are no terms corresponding to
$\eta^{2,T}$ from (\ref{bdf-est-time-terms}). Here we use the time derivatives
of the quadratic reconstructions $\derpar{\tilde{u}_h}{t}$ and
$\derpar{\tilde{w}_h}{t}$ in the element residuals, instead of retaining only
the constant part $\partial_n^G u_h$ and $\partial_n^G w_h$ as is done in
Section \ref{subsubsec:gear-scal}. The following lemma is an easy extension of
Lemma \ref{lem-res}.

\begin{lem}\label{lem-bdf-mono}
  For $v_h\in V_h$ and $t\in(t_{n-1},t_n]$ with $n\geq3$
  \begin{align*}
    \inp{\derpar{\tilde{u}_{h\Delta t}}{t}}{v_h}{\Omega}
    +\inp{\nabla u_{h\Delta t}}{\nabla v_h}{\Omega}
    &=-\inp{I_1(F(u_h,w_h))}{v_h}{\Omega}-Q_n(t)\inp{\partial_n^3u_h}{v_h}{\Omega},\\
    \inp{\derpar{\tilde{w}_{h\Delta t}}{t}}{v_h}{\Omega}
    &=-\inp{I_1(G(u_h,w_h))}{v_h}{\Omega}-Q_n(t)\inp{\partial_n^3w_h}{v_h}{\Omega},
  \end{align*}
  where
  $Q_n(t)=\frac{\tau_{n-1}(\tau_n+\tau_{n-1}+\tau_{n-2})}{6\tau_n}(t-t_n).$
\end{lem}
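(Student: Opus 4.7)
The plan is to mimic the proof of Lemma \ref{lem-res} essentially verbatim, exploiting the fact that the key algebraic identity driving the argument is purely a property of the BDF2 time-stepping reconstruction and is independent of the particular spatial operator. Both identities in the statement follow from applying the two-step discrete variational equality \eqref{eq:beul-mono-solve} at indices $n$ and $n-1$ and combining it with the expression $\frac{\partial \tilde{u}_{h\Delta t}}{\partial t} = \partial_n^G u_h + (t-t_n)\partial_n^2 u_h$ (and analogously for $w$).

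For the first identity, I would proceed as follows. Using the $u$-equation of \eqref{eq:beul-mono-solve} at $t_n$ and $t_{n-1}$, I would substitute into $\inp{\nabla u_{h\Delta t}}{\nabla v_h}{\Omega}$, expanding $u_{h\Delta t}$ by its definition so that
\begin{align*}
  \inp{\nabla u_{h\Delta t}}{\nabla v_h}{\Omega}
  &= -\inp{I_1(F(u_h,w_h))}{v_h}{\Omega}\\
  &\quad - \inp{\partial_n^G u_h + \tfrac{t-t_n}{\tau_n}(\partial_n^G u_h - \partial_{n-1}^G u_h)}{v_h}{\Omega}.
\end{align*}
Then I would add $\inp{\partial \tilde{u}_{h\Delta t}/\partial t}{v_h}{\Omega}$ and invoke the identity $\partial_n^G u_h = \partial_n^1 u_h + \tfrac{\tau_n}{2}\partial_n^2 u_h$ together with the corresponding relation at index $n-1$ to collapse the difference $\partial_n^2 u_h - \tau_n^{-1}(\partial_n^G u_h - \partial_{n-1}^G u_h)$ into $-\tfrac{\tau_{n-1}(\tau_n+\tau_{n-1}+\tau_{n-2})}{6\tau_n}\partial_n^3 u_h$, which is precisely $Q_n(t)(t-t_n)^{-1}\partial_n^3 u_h$. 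This yields the first claimed equality.

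For the second identity, the argument is even shorter because the $w$-equation in \eqref{eq:beul-mono-solve} has no diffusion term. I would apply that equation at $t_n$ and $t_{n-1}$ directly to rewrite $\inp{I_1(G(u_h,w_h))}{v_h}{\Omega}$ in terms of $\partial_n^G w_h$ and $\partial_{n-1}^G w_h$, then use $\frac{\partial \tilde{w}_{h\Delta t}}{\partial t} = \partial_n^G w_h + (t-t_n)\partial_n^2 w_h$ and the same purely temporal identity $\partial_n^2 w_h - \tau_n^{-1}(\partial_n^G w_h - \partial_{n-1}^G w_h) = -\tfrac{\tau_{n-1}(\tau_n+\tau_{n-1}+\tau_{n-2})}{6\tau_n}\partial_n^3 w_h$ to obtain the stated formula.

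There is no real obstacle here; the only thing to be careful about is making sure that the collapsing identity relating $\partial_n^2$, $\partial_n^G$, and $\partial_n^3$ is used in the same form as in Lemma \ref{lem-res} (where it was established algebraically and does not reference the PDE), so that the reaction terms $F$ and $G$ simply pass through the manipulations untouched. Consequently the proof can be stated very briefly by observing that the algebra is identical to that of Lemma \ref{lem-res}, performed once with $f(u_h)$ replaced by $F(u_h,w_h)$ and once, without the diffusion contribution, for the $w$-component with $G(u_h,w_h)$.
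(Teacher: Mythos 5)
Your proposal is correct and matches the paper's intent exactly: the paper gives no separate proof, stating only that the lemma is ``an easy extension'' of Lemma~\ref{lem-res}, and your argument is precisely that extension --- the same two-step substitution of the scheme at $t_n$ and $t_{n-1}$ and the same purely temporal identity collapsing $\partial_n^2-\tau_n^{-1}(\partial_n^G-\partial_{n-1}^G)$ into $-\tfrac{\tau_{n-1}(\tau_n+\tau_{n-1}+\tau_{n-2})}{6\tau_n}\partial_n^3$, applied once with $F$ and once, without the gradient term, with $G$.
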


\begin{theo}\label{theorem-bdf2-mono}
  Suppose that \emph{\ref{mono-cond1}} or \emph{\ref{mono-cond2}} is satisfied. There
  exists a constant $C_{\hat{K}}>0$ independent of the mesh and the step size
  and $C(\alpha)>0$ depending linearly on $\alpha$ such that, for $n\geq3$,
  \begin{align}\label{bdf2est-full-mono}
    &\|\tilde{e}_u\|_{L^\infty(t_{n-1},t_n;H)}^2 + \|\tilde{e}_w\|_{L^\infty(t_{n-1},t_n;H)}^2
      + \vertiii{e_u}_n^2\nonumber\\
    &\leq C_{\hat{K}}e^{C(\alpha)\tau_n}\left(\norz{e_u(t_{n-1})}{\Omega}^2 + \norz{e_w(t_{n-1})}{\Omega}^2 +
      (\tilde{\eta}_n^{S,U})^2 + (\tilde{\eta}_n^{S,W})^2 + (\eta_n^{T,U})^2 + (\eta_n^{T,W})^2 \right).
  \end{align}
  If \emph{\ref{mono-cond3}} is satisfied, then there exists $C_{AN}>0$ depending on
  the superconvergence assumption \emph{(\ref{eq:h1-l2})} such that, for
  $n\geq3$,
  \begin{align}\label{bdf2est-full-mono-opt}
    &\|\tilde{e}_u\|_{L^\infty(t_{n-1},t_n;H)}^2 + \|\tilde{e}_w\|_{L^\infty(t_{n-1},t_n;H)}^2
      + \vertiii{e_u}_n^2
      + \frac{\beta}{2}\|\tilde{e}_w\|_{L^2(t_{n-1},t_n;H)}^2\nonumber\\
    &\leq C_{\hat{K}}\left(\norz{e_u(t_{n-1})}{\Omega}^2 + \norz{e_w(t_{n-1})}{\Omega}^2 +
      (\tilde{\eta}_n^{S,U})^2 + (\tilde{\eta}_n^{S,W})^2 + (\eta_n^{T,U})^2 + (\eta_n^{T,W})^2 \right).
  \end{align}
\end{theo}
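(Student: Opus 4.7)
The plan mirrors the strategy of Theorem \ref{theorem-bdf2}, extended to the coupled $(u,w)$ system. On a subinterval $(t_{n-1},t_n]$, I would first subtract the continuous variational formulation of (\ref{eqn:mono}) from the discrete identities produced by Lemma \ref{lem-bdf-mono}, thereby obtaining two error equations for $\tilde{e}_u$ and $\tilde{e}_w$. Their right-hand sides contain: element and edge residuals from integration by parts (edge jumps appear only in the $u$-equation since the $w$-equation has no Laplacian, which is precisely why there is no $r_{K,n}^W$ contribution in (\ref{eta-mono-tilde-K})); the higher-order $Q_n$ remainders from Lemma \ref{lem-bdf-mono}; the reconstruction errors $F(\tilde u_h,\tilde w_h)-I_1(F(u_h,w_h))$ and $G(\tilde u_h,\tilde w_h)-I_1(G(u_h,w_h))$; and the nonlinear defects $F(u,w)-F(\tilde u_h,\tilde w_h)$ and $G(u,w)-G(\tilde u_h,\tilde w_h)$.

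I would then test the $u$-equation with $(v,v_h)=(\tilde{e}_u,I_h(\tilde{e}_u))$ and the $w$-equation with $(v,v_h)=(\tilde{e}_w,I_h(\tilde{e}_w))$, use the identity (\ref{eq-pars}) to split $\inp{\nabla e_u}{\nabla\tilde{e}_u}{\Omega}=\tfrac{1}{2}\noro{e_u}{\Omega}^2+\tfrac{1}{2}\noro{\tilde{e}_u}{\Omega}^2$ plus a remainder contributing $\eta_n^{1,T,U}$, add the two equations, and integrate in time from $t_{n-1}$ to some $s\in(t_{n-1},t_n]$. Bounding residual products by Cauchy--Bunyakowsky--Schwartz and invoking the anisotropic interpolation estimates produces the $\tilde{\eta}_n^{S,U}$ and $\tilde{\eta}_n^{S,W}$ contributions, while the $Q_n$ terms and the reconstruction-error terms are handled by Young's inequality with a parameter $\gamma>0$ fixed at the end, producing $\eta_n^{3,T,\cdot}$ and $\eta_n^{4,T,\cdot}$.

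The principal obstacle is the nonlinear coupling term $\inp{F(u,w)-F(\tilde u_h,\tilde w_h)}{\tilde{e}_u}{\Omega}+\inp{G(u,w)-G(\tilde u_h,\tilde w_h)}{\tilde{e}_w}{\Omega}$. Unlike in Theorem \ref{theorem-bdf2}, the $w$ component has no parabolic dissipation available to absorb its $L^2$ norm. Under \ref{mono-cond1} the pointwise one-sided bound gives a lower bound of $-\alpha(\|\tilde{e}_u\|_{0,\Omega}^2+\|\tilde{e}_w\|_{0,\Omega}^2)$, and under \ref{mono-cond2} the same bound follows from local Lipschitz continuity combined with the assumed $L^\infty$ a priori control. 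After carrying $\|\tilde{e}_u\|_{0,\Omega}^2$ to the right and absorbing it into $\vertiii{e_u}_n^2$ via (\ref{eq:h1-l2}) (for $N_T$ large enough that the coefficient of $\vertiii{e_u}_n^2$ remains positive), the remaining $\|\tilde{e}_w\|_{0,\Omega}^2$ is closed by Gronwall's inequality over $(t_{n-1},s)$; taking the supremum in $s$ and using that $\tilde{e}_u(t_{n-1})=e_u(t_{n-1})$ and $\tilde{e}_w(t_{n-1})=e_w(t_{n-1})$ simultaneously yields the $L^\infty(t_{n-1},t_n;H)$ norms on the left and the prefactor $e^{C(\alpha)\tau_n}$ in (\ref{bdf2est-full-mono}).

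Under the strengthened assumption \ref{mono-cond3}, the same coupling term is bounded below by $-\alpha\|\tilde{e}_u\|_{0,\Omega}^2+\beta\|\tilde{e}_w\|_{0,\Omega}^2$, so that after integration in time there is a genuinely positive dissipation $\beta\|\tilde{e}_w\|_{L^2(t_{n-1},t_n;H)}^2$ appearing on the right side of the error equation. Moving half of this to the left produces the $\tfrac{1}{2}\beta\|\tilde{e}_w\|_{L^2(t_{n-1},t_n;H)}^2$ term of (\ref{bdf2est-full-mono-opt}) and obviates the Gronwall step entirely, so the exponential factor disappears; the $\alpha\|\tilde{e}_u\|^2$ contribution is still absorbed through (\ref{eq:h1-l2}) as in Theorem \ref{theorem-bdf2}, and the $L^\infty$ norms of $\tilde{e}_u$ and $\tilde{e}_w$ come from taking the supremum over $s$ of the already-summable right-hand side. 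Collecting all bounds completes the derivation of the two estimates.
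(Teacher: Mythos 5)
Your proposal follows essentially the same route as the paper: derive the two error equations from Lemma \ref{lem-bdf-mono} and the continuous formulation, test with $(\tilde{e}_u,I_h(\tilde{e}_u))$ and $(\tilde{e}_w,I_h(\tilde{e}_w))$, use (\ref{eq-pars}), bound the residual and reconstruction terms by the interpolation estimates and Young's inequality, handle the nonlinear coupling via \ref{mono-cond1}/\ref{mono-cond2} or \ref{mono-cond3}, and close with Gronwall (first case) or by retaining the $\beta$-dissipation and absorbing the remaining $\alpha\norz{\tilde{e}_u}{\Omega}^2$ through (\ref{eq:h1-l2}) (second case). The treatment of the second estimate, including the observation that the Young parameter must be tuned so that the spurious $\norz{\tilde{e}_w}{\Omega}^2$ coefficient stays below $\beta/2$, matches the paper's argument that $C_4$ can be made arbitrarily small.

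The one point to fix is in your proof of the first estimate (\ref{bdf2est-full-mono}): you absorb the $\alpha\norz{\tilde{e}_u}{\Omega}^2$ contribution using the superconvergence assumption (\ref{eq:h1-l2}), but that hypothesis is \emph{not} assumed in this part of the theorem --- it enters only in (\ref{bdf2est-full-mono-opt}). As written, your argument for the first bound would fail on meshes where (\ref{eq:h1-l2}) is not available. The paper avoids this by not absorbing that term at all: both $\norz{\tilde{e}_u}{\Omega}^2$ and $\norz{\tilde{e}_w}{\Omega}^2$ are kept on the right with the combined constant $C(\alpha)=\max\{C_2,C_4\}+\alpha$ and are fed jointly into Gronwall, which is precisely the origin of the prefactor $e^{C(\alpha)\tau_n}$ in (\ref{bdf2est-full-mono}). (A secondary, cosmetic imprecision: (\ref{eq:h1-l2}) converts $\norz{\tilde{e}_u}{\Omega}^2$ into the $H^1$ seminorm of $\tilde{e}_u$, so the absorption in the second estimate goes into $\int_{t_{n-1}}^{t_n}\noro{\tilde{e}_u}{\Omega}^2\,\mathrm{d}t$, which is present on the left of the summed inequality before being dropped, rather than into $\vertiii{e_u}_n^2$ itself.) With the Gronwall step applied to both $L^2$ terms in the first case, your derivation coincides with the paper's proof.
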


\begin{proof}

  By inspecting the proof of Theorem \ref{theorem-bdf2} up to
  (\ref{eq-integ-full}), we get for the first equation
  \begin{align}\label{eq-integ-full-mono1}
    &\inp{\derpar{\tilde{e}_{u}}{t}}{\tilde{e}_{u}}{\Omega} + \inp{\nabla e_u}{\nabla
      \tilde{e}_{u}}{\Omega}\nonumber\\
    &=-\inp{\derpar{\tilde{u}_{h\Delta t}}{t} + F(\tilde{u}_{h\Delta
      t},\tilde{w}_{h\Delta t})}{\tilde{e}_{u}-I_h(\tilde{e}_{u})}{\Omega}
      - \inp{\nabla u_{h\Delta t}}{\nabla(\tilde{e}_{u}-I_h(\tilde{e}_{u}))}{\Omega}\nonumber\\
    &-Q_n(t)\inp{\partial_n^3u_h}{I_h(\tilde{e}_{u})}{\Omega}
      -\inp{F(\tilde{u}_{h\Delta t},\tilde{w}_{h\Delta t})-I_1(F(u_h,w_h))}{I_h(\tilde{e}_{u})}{\Omega}\nonumber\\
    &-\inp{F(u,w)-F(\tilde{u}_{h\Delta t},\tilde{w}_{h\Delta t})}{\tilde{e}_{u}}{\Omega}.
  \end{align}
  Therefore, applying Young's inequality and (\ref{eq-pars}), there exist
  positive constants $C_1,\,C_2$ depending on the interpolation operator $I_h$
  such that
  \begin{align}
    \frac{d}{dt}\norz{\tilde{e}_u}{\Omega}^2 +
    & \noro{e_u}{\Omega}^2 + \noro{\tilde{e}_u}{\Omega}^2\nonumber\\
    &\leq C_1\left(\sum_K\left(\|R_{K,n}^U\|_{0,K} +
      \left(\frac{h_K}{\lambda_{1,K}\lambda_{2,K}}\right)^{1/2}\|r_{K,n}^U\|_{0,\partial
      K}\right)\tilde{\omega}_K(\tilde{e}_u)\right.\nonumber\\
    & + \frac{1}{2}(t-t_n)^2(t-t_{n-1})^2\noro{\partial_n^2u_h}{\Omega}^2
      + \norz{Q_n(t)\partial_n^3u_h}{\Omega}^2\nonumber\\
    &  + \|F(\tilde{u}_{h\Delta t},\tilde{w}_{h\Delta t})-I_1(F(u_h,w_h))\|_{0,\Omega}^2\Bigg)\nonumber\\
    & + C_2 \|\tilde{e}_u\|_{0,\Omega}^2 - \inp{F(u,w)-F(\tilde{u}_{h\Delta t},\tilde{w}_{h\Delta t})}{\tilde{e}_u}{\Omega}\nonumber\\
    & = S_1(t)+ C_2 \|\tilde{e}_u\|_{0,\Omega}^2 - \inp{F(u,w)-F(\tilde{u}_{h\Delta
      t},\tilde{w}_{h\Delta t})}{\tilde{e}_u}{\Omega},\label{ineq:eq1-deriv}
  \end{align}
  where $S_1(t)$ contains the remainder of the terms. Similarly, for the second
  equation we get
  \begin{align}\label{eq-integ-full-mono2}
    \inp{\derpar{\tilde{e}_{w}}{t}}{\tilde{e}_{w}}{\Omega}
    &=-\inp{\derpar{\tilde{w}_{h\Delta t}}{t} + G(\tilde{u}_{h\Delta t},\tilde{w}_{h\Delta t})}{\tilde{e}_{w}-I_h(\tilde{e}_{w})}{\Omega}\nonumber\\
    &-Q_n(t)\inp{\partial_n^3w_h}{I_h(\tilde{e}_{w})}{\Omega}
      -\inp{G(\tilde{u}_{h\Delta t},\tilde{w}_{h\Delta t})-I_1(G(u_h,w_h))}{I_h(\tilde{e}_{w})}{\Omega}\nonumber\\
    &-\inp{G(u,w)-G(\tilde{u}_{h\Delta t},\tilde{w}_{h\Delta t})}{\tilde{e}_{w}}{\Omega}.
  \end{align}
  so there exist positive constants $C_3,\,C_4$, also depending on $I_h$, such that
  \begin{align}
    \frac{d}{dt}\norz{\tilde{e}_w}{\Omega}^2
    &\leq C_3\left(\sum_K\|R_{K,n}^W\|_{0,K}\omega_K(\tilde{e}_w)
      + \norz{Q_n(t)\partial_n^3w_h}{\Omega}^2\right.\nonumber\\
    & + \|G(\tilde{u}_{h\Delta t},\tilde{w}_{h\Delta t})-I_1(G(u_h,w_h))\|_{0,\Omega}^2\Bigg)\nonumber\\
    & + C_4 \|\tilde{e}_w\|_{0,\Omega}^2 
      - \inp{G(u,w)-G(\tilde{u}_{h\Delta t},\tilde{w}_{h\Delta t})}{\tilde{e}_w}{\Omega}.\nonumber\\
    & = S_2(t) + C_4 \|\tilde{e}_w\|_{0,\Omega}^2 
      - \inp{G(u,w)-G(\tilde{u}_{h\Delta t},\tilde{w}_{h\Delta t})}{\tilde{e}_w}{\Omega}.\label{ineq:eq2-deriv}
  \end{align}
  If we assume either
  \ref{mono-cond1} or \ref{mono-cond2}, then
  \begin{align}
    &C_2\|\tilde{e}_u\|_{0,\Omega}^2 + C_4\|\tilde{e}_w\|_{0,\Omega}^2\nonumber\\
    &- \inp{F(u,w)-F(\tilde{u}_{h\Delta t},\tilde{w}_{h\Delta t})}{\tilde{e}_u}{\Omega}
      - \inp{G(u,w)-G(\tilde{u}_{h\Delta t},\tilde{w}_{h\Delta t})}{\tilde{e}_w}{\Omega}\nonumber\\
    &\leq (C(\alpha))(\|\tilde{e}_u\|_{0,\Omega}^2 + \|\tilde{e}_w\|_{0,\Omega}^2),\label{ineq:applyconds}
  \end{align}
  where $C(\alpha)=\max\{C_2,C_4\}+\alpha.$ Taking the sum of
  (\ref{ineq:eq1-deriv}) and (\ref{ineq:eq2-deriv}), and applying Gronwall's
  inequality we obtain for a.e. $t\in (t_{n-1},t_n]$
  \begin{align*}
    &\norz{\tilde{e}_u(t)}{\Omega}^2 + \norz{\tilde{e}_w(t)}{\Omega}^2 + \int_{t_{n-1}}^te^{C(\alpha)(t-s)}\noro{e_u(s)}{\Omega}^2\,\mathrm{d}s\\
    &\leq e^{C(\alpha)(t-t_{n-1})}\left(\norz{\tilde{e}_u(t_{n-1})}{\Omega}^2 +
      \norz{\tilde{e}_w(t_{n-1})}{\Omega}^2\right)
      + \intT{t_{n-1}}{t}{e^{C(\alpha)(t-s)}(S_1(s)+S_2(s))}{s}\\
    &\leq e^{C(\alpha)\tau_n}\left(\norz{\tilde{e}_u(t_{n-1})}{\Omega}^2 +
      \norz{\tilde{e}_w(t_{n-1})}{\Omega}^2\right)
      + \intT{t_{n-1}}{t_n}{e^{C(\alpha)(t_{n}-t)}(S_1(t)+S_2(t))}{t}.
  \end{align*}
  Since $t$ is arbitrary, we have
  \begin{align*}
    &\|\tilde{e}_u\|_{L^\infty(t_{n-1},t_n;H)}^2 + \|\tilde{e}_w\|_{L^\infty(t_{n-1},t_n;H)}^2
      + \intT{t_{n-1}}{t_n}{e^{C(\alpha)(t_n-t)}\noro{e_h(t)}{\Omega}^2}{t}\\
    &\leq 2e^{C(\alpha)\tau_n}\left(\norz{\tilde{e}_u(t_{n-1})}{\Omega}^2 +
      \norz{\tilde{e}_w(t_{n-1})}{\Omega}^2\right)
      + 2\intT{t_{n-1}}{t_n}{e^{C(\alpha)(t_n-t)}(S_1(t)+S_2(t))}{t},
  \end{align*}
  and conclude (\ref{bdf2est-full-mono}) using the fact that $1\leq
  e^{C(\alpha)(t_n-t)}\leq e^{C(\alpha)\tau_n}$. On the other hand, suppose
  \ref{mono-cond3} holds. Then instead of (\ref{ineq:applyconds}) we have
  \begin{align}
    &C_2\|\tilde{e}_u\|_{0,\Omega}^2 + C_4\|\tilde{e}_w\|_{0,\Omega}^2\nonumber\\
    &- \inp{F(u,w)-F(\tilde{u}_{h\Delta t},\tilde{w}_{h\Delta t})}{\tilde{e}_u}{\Omega}
      - \inp{G(u,w)-G(\tilde{u}_{h\Delta t},\tilde{w}_{h\Delta t})}{\tilde{e}_w}{\Omega}\nonumber\\
    &\leq (C_2+\alpha)\|\tilde{e}_u\|_{0,\Omega}^2 +
      (C_4-\beta)\|\tilde{e}_w\|_{0,\Omega}^2.
      \label{eq-integ-opt}
  \end{align}
  The first term on the right of (\ref{eq-integ-opt}) can be dealt with the same
  way as in the proof of Theorem \ref{theorem-bdf2}. Moreover, from the proof we
  note that the constant $C_4$ can be made arbitrarily small at the cost of
  making the constant $C_3$ larger. In particular, we can take
  $C_4<\frac{\beta}{2}$. Then (\ref{bdf2est-full-mono-opt}) follows after
  applying (\ref{eq:h1-l2}) and integrating over $t$.
\end{proof}

As was done for the scalar problem, we remove the dependency on the exact
solution by replacing $\nabla u,\nabla w$ in the estimators
$\tilde{\eta}^{S,U},\tilde{\eta}^{S,W}$ with the recovered gradients
$\Pi_h(u_h),\Pi_h(w_h)$ respectively, and denote the resulting estimators by
$\eta^{S,U},\eta^{S,W}$.

In what follows, we look at specific ionic models. For convenience, we have the
following lemma.
\begin{lem}\label{lem:eigs}
  Suppose there exist positive constants $\mu_1,\mu_2,\,\mu_3$ such the following
  inequalities hold uniformly on the domain $\mathcal{D}$:
  \begin{align}
    \derpar{F}{x_1}\geq-\mu_1,\quad
    \left|\derpar{F}{x_2}\right|+\left|\derpar{G}{x_1}\right|\leq\mu_2,\quad
    \derpar{G}{x_2}\geq\mu_3.\label{eqn:der-bounds}
  \end{align}
  Then \emph{\ref{mono-cond3}} holds with $\alpha=\mu_1+\frac{\mu_2^2}{2\mu_3}$, and
  $\beta=\frac{\mu_3}{2}$.
\end{lem}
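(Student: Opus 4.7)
The plan is to use the fundamental theorem of calculus along the segment joining $y$ to $x$ to write each of $F(x)-F(y)$ and $G(x)-G(y)$ as a line integral of partial derivatives, converting the left-hand side of the inequality in \ref{mono-cond3} into an integral of a quadratic form in $(x_1-y_1,x_2-y_2)$ whose coefficients are controlled by \eqref{eqn:der-bounds}. Setting $\xi(t)=y+t(x-y)$ for $t\in[0,1]$ (with $\xi(t)\in\mathcal{D}$, which is automatic when $\mathcal{D}$ is convex, as in the invariant-rectangle setting of Section \ref{sec:mono-est}), one has
\[
F(x)-F(y)=\int_0^1 \bigl[\partial_1 F(\xi(t))(x_1-y_1)+\partial_2 F(\xi(t))(x_2-y_2)\bigr]\,\mathrm{d}t,
\]
together with the analogous identity for $G(x)-G(y)$.

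Multiplying the first identity by $(x_1-y_1)$ and the second by $(x_2-y_2)$ and adding produces the integrand
\[
\partial_1 F(\xi(t))\,(x_1-y_1)^2 + \bigl(\partial_2 F(\xi(t))+\partial_1 G(\xi(t))\bigr)(x_1-y_1)(x_2-y_2) + \partial_2 G(\xi(t))\,(x_2-y_2)^2.
\]
Applying the pointwise bounds in \eqref{eqn:der-bounds}, combined with the triangle inequality $|\partial_2 F+\partial_1 G|\leq\mu_2$, gives the lower bound
\[
-\mu_1(x_1-y_1)^2 - \mu_2\,|x_1-y_1|\,|x_2-y_2| + \mu_3(x_2-y_2)^2.
\]
The indefinite cross term is then absorbed by Young's inequality with weight $\epsilon=\mu_3/\mu_2$, yielding $\mu_2|x_1-y_1||x_2-y_2|\leq \mu_2^2/(2\mu_3)\,(x_1-y_1)^2 + (\mu_3/2)\,(x_2-y_2)^2$. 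Substituting this back leaves the $t$-independent lower bound $-\bigl(\mu_1+\mu_2^2/(2\mu_3)\bigr)(x_1-y_1)^2 + (\mu_3/2)(x_2-y_2)^2$, and integrating over $[0,1]$ produces \ref{mono-cond3} with exactly the stated $\alpha$ and $\beta$.

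There is no real obstacle here: the lemma amounts to a lower bound for an indefinite $2\times 2$ quadratic form, and the only design choice is the Young weight. Picking $\epsilon=\mu_3/\mu_2$ is precisely what is needed so that half of the coercive contribution $\mu_3(x_2-y_2)^2$ survives, pinning $\beta=\mu_3/2$ and in turn $\alpha=\mu_1+\mu_2^2/(2\mu_3)$. Other weights trade the two constants against each other but cannot strictly improve both, given the sign structure imposed by \eqref{eqn:der-bounds}.
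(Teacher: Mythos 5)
Your proof is correct and follows essentially the same route as the paper's: reduce to a quadratic form in $(x_1-y_1,x_2-y_2)$ along the segment joining $x$ and $y$, bound its coefficients by \eqref{eqn:der-bounds}, and absorb the cross term with Young's inequality weighted so that $\beta=\mu_3/2$ survives. The only cosmetic difference is that you use the integral form of the mean value theorem where the paper uses the pointwise form; the constants come out identically.
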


\begin{proof}

  For any $x,\,y\in\mathcal{D}$, by the mean value theorem there exists
  $\xi_1,\,\xi_2$ on the line segment between $x$ and $y$ such that
  \begin{align*}
    &(F(x)-F(y))(x_1-y_1)+(G(x)-G(y))(x_2-y_2)\\
    &=\derpar{F}{x_1}(\xi_1)(x_1-y_1)^2
      + \left(\derpar{F}{x_2}(\xi_1)+\derpar{G}{x_1}(\xi_2)\right)(x_1-y_1)(x_2-y_2)
      + \derpar{G}{x_2}(\xi_2)(x_2-y_2)^2.
  \end{align*}
  Applying (\ref{eqn:der-bounds}) and Young's inequality, for arbitrary $\gamma>0$
  \begin{align*}
    &(F(x)-F(y))(x_1-y_1)+(G(x)-G(y))(x_2-y_2)\\
    &\geq-\mu_1(x_1-y_1)^2
      - \mu_2|x_1-y_1||x_2-y_2|
      + \mu_3(x_2-y_2)^2\\
    &\geq-\left(\mu_1+\frac{\mu_2^2}{2\gamma}\right)(x_1-y_1)^2
      + \left(\mu_3-\frac{\gamma}{2}\right)(x_2-y_2)^2.
  \end{align*}
  The result follows choosing $\gamma=\mu_3$.
\end{proof}


\paragraph{FitzHugh-Nagumo model}
\label{subsec:FHN}

\begin{align}
  F(u,w)&=f_1(u)+w=u(u-a)(u-1)+w,\\
  G(u,w)&=-\epsilon(\kappa u-w),\label{FHN-recov-eq}
\end{align}
and $0<a<1,\,\epsilon,\,\kappa>0.$ It is clear that $F,\,G$ are locally
Lipschitz continuous. Invariant rectangles of arbitrary size exist for the
model, see \cite{Smoller-shock}, \cite{CP06}, so the solution remains bounded if
$u_0,\,w_0\in L^\infty(\Omega)$. Furthermore, letting $\mu>0$ be such that
$f_1'(x)\geq-\mu,$ then applying Lemma \ref{lem:eigs} we obtain
\ref{mono-cond3} with $\alpha=\mu+\frac{(1+\epsilon\kappa)^2}{2\epsilon}$ and
$\beta=\frac{\epsilon}{2}.$


\paragraph{Mitchell-Schaeffer model}
\label{subsec:mit-scha}

\begin{align}
  F(u,w) &= \frac{1}{\tau_{in}}wu^2(u-1)+\frac{1}{\tau_{out}}u\\
  G(u,w) &=
           \left\{
           \begin{array}{ll}
             \frac{1}{\tau_{open}}(w-1),&u<u_{gate},\\
             \frac{1}{\tau_{close}}w,&u\geq u_{gate},
           \end{array}\right.
\end{align}
where $\tau_{in},\,\tau_{out},\,\tau_{open},\,\tau_{close},\,u_{gate}$ are
positive constants, with $0<u_{gate}<1.$ The reaction term $G$ is discontinuous
on the line $u=u_{gate},$ and $F,\,G$ do not satisfy \ref{mono-cond1} or
\ref{mono-cond2} for a domain $\mathcal{D}$ crossing this line. We will use
the following regularized version:
\begin{align*}
  G(u,w)
  &=\frac{1}{\tau_u}((1-s)(w-1)+sw),\\
  \tau_u
  &=\tau_{open}+(\tau_{close}-\tau_{open})s,\\
  s(u,\kappa,u_{gate})
  &=\frac{1}{2}(1+\tanh(\kappa(u-u_{gate}))),
\end{align*}
where $\kappa>0.$ Both $F,\,G$ are now $C^1$ so that \ref{mono-cond2} holds.
The solutions remain bounded provided the initial conditions are
bounded. Moreover, applying the maximum principle from \cite{CP06}, for
arbitrary $\epsilon>0$, the region
$\{(u,\,w):-\epsilon\leq u\leq 1,\,0\leq w\leq 1\}$ is invariant, and we
conclude that the region $\{(u,\,w):0\leq u\leq 1,\,0\leq w\leq 1\}$ is
invariant as well.

We now show that \ref{mono-cond3} also holds. For the first reaction term,
\begin{align*}
  \derpar{F}{x_1}
  &=\frac{1}{\tau_{in}}x_2(3x_1^2-2x_1)+\frac{1}{\tau_{out}}.
\end{align*}
For a typical heartbeat, $(u,w)$ is in $[0,1]^2$ so we may take
$\mu_1=\frac{1}{3\tau_{in}} - \frac{1}{\tau_{out}}$ from the bound on $w.$ From
the bound on $u$ we get
\begin{align*}
  \left|\derpar{F}{x_2}\right|
  &\leq\max_{x_1\in[0,1]}\frac{1}{\tau_{in}}|x_1^2(x_1-1)|=\frac{1}{\tau_{in}}\frac{4}{27}.
\end{align*}
For the second term,
\begin{align*}
  \derpar{G}{x_2}
  &=\frac{1}{\tau_u}\geq\min\left\{\frac{1}{\tau_{open}},\frac{1}{\tau_{close}}\right\}.
\end{align*}
Finally,
\begin{align*}
  \derpar{G}{x_1}
  &=\kappa\,\sech^2(\kappa(x_1-u_{gate}))\frac{1}{\tau_u^2}\left(\tau_u-(x_2+s-1)(\tau_{close}-\tau_{open})\right),
\end{align*}
so assuming that $x_2$ remains in $[0,1]$,
\begin{align*}
  \left|\derpar{G}{x_1}\right|
  &\leq \kappa\frac{\tau_u+|\tau_{close}-\tau_{open}|}{\tau_u^2}\\
  &\leq \kappa\frac{\tau_{max}+|\tau_{close}-\tau_{open}|}{\tau_{min}^2},
\end{align*}
where $\tau_{min,max}=\min,\max\{\tau_{open},\tau_{close}\}$. To summarize, from
Lemma \ref{lem:eigs} we obtain \ref{mono-cond3} with
$\alpha=\frac{1}{3}\tau_{in}^{-1} - \tau_{out}^{-1} +
\frac{1}{2}\left(\frac{4}{27}\tau_{in}^{-1} +
  \kappa\frac{\tau_{max}+|\tau_{close}-\tau_{open}|}{\tau_{min}^2}\right)^2\tau_{max}$
and $\beta=\frac{1}{2\tau_{max}}.$ Under the assumption that
$\tau_{in}\ll\tau_{out}\ll\tau_{open},\tau_{close},$ we have
$\alpha=O\left(\tau_{in}^{-2}+\kappa^2\frac{\tau_{max}^3}{\tau_{min}^2}\right)$.
Note that when taking the limit $\kappa\rightarrow\infty$ to approach the
original discontinuous model, the constant $\alpha$ blows up. The proof above
relies on a uniform bound for $\derpar{G}{x_1}$, which clearly does not exist
for the discontinuous model, and therefore a different approach would have to be
taken.


\subsubsection{A modified estimator for the recovery variable}
\label{subsec:recov-mod}

We discuss some technical considerations for the use of the estimators
introduced in Theorem \ref{theorem-bdf2-mono} for mesh adaptation. A shortcoming
of the estimator is that it does not take into account the interpolation error
that occurs when the mesh is adapted, but implicitly assumes that the mesh is
left unchanged throughout. In what follows, we offer some heuristic arguments
for dealing with the interpolation error, in particular as it applies to the
second variable $w$. To simplify the discussion, we consider a scalar ODE model:
\begin{equation}\label{ode-lin}
  \left\{
    \begin{array}{ll}
      \dfrac{\partial w}{\partial t}+\mu w
      =f,\\
      w(0)=w_0.
    \end{array}
  \right.
\end{equation}
where $\mu\geq 0$ is a constant and $f\in L^2(0,T;H)$. Applying the
same arguments as in Theorem \ref{theorem-bdf2-mono} it can be shown that the
error satisfies, for $n\geq 3$,
\begin{align}\label{estim-ode}
  &\norz{e_w(t_n)}{\Omega}^2 + \mu\int_{t_{n-1}}^{t_n}\norz{e_w(t)}{\Omega}^2\,\mathrm{d}t\nonumber\\
  &\leq \norz{e_w(t_{n-1})}{\Omega}^2 + C_{\hat{K}}\left((\eta_n^{S,W})^2 + (\eta_n^{T,W})^2 \right),
\end{align}
where the residual defined on $K$ is
$R_{K,n}^W=\derpar{\tilde{w}_h}{t} + \mu\tilde{w}_h-f$. If we are in the special
situation that $f$ is in $L^2(0,T;V_h)$, then $R_{K,n}^W$ is as well. Therefore,
by Galerkin orthogonality, $R_{K,n}^W(t_n)=0$ and from this it is easy to
conclude that $R_{K,n}^W(t)\rightarrow 0$ uniformly in $t$ as
$\tau_n\rightarrow0$. By contrast, note that for parabolic problems, the
differential operator $A$ rarely satisfies $A(V_h)\subseteq V_h$. Moreover,
since the time estimator $\eta_n^{T,W}$ is $O(\tau_n^{5/2})$, then for $\tau_n$
small enough, the dominant term on the right side of (\ref{estim-ode}) is the
initial error $\norz{e_w(t_{n-1})}{\Omega}^2.$ Inspecting this term, let
$\pi_h^n$ be the interpolation operator for the new mesh obtained at time $t_n$,
and $w_h^{n-1,old}$ be the solution computed on the previous mesh so that
$w_h^{n-1}=\pi_h^n(w_h^{n-1,old})$. From the relation
$e_w(t_{n-1})=w(t_{n-1})-w_h^{n-1,old}+w_h^{n-1,old}-\pi_h^n(w_h^{n-1,old})$ we
get
\begin{align}\label{ode-interp}
  \|e_w(t_{n-1})\|_{0,\Omega}
  &\leq \|e_w^{old}(t_{n-1})\|_{0,\Omega}+\|w_h^{n-1,old}-\pi_h(w_h^{n-1,old})\|_{0,\Omega}.
\end{align}
If the interpolation error is not taken into account when adapting the mesh, the
last term on the right of (\ref{ode-interp}) will spoil the control of the
error. For instance, in regions of the domain where the solution varies rapidly,
the interpolation error remains large independent of the time step.

To control the interpolation error, we consider the estimator
\begin{align}\label{ode-recov-est-glob}
  \omega^{S,W}(t)
  =\left(\sum_K\omega_{K}(w_h)^2\right)^{1/2},
\end{align}
where $\omega_{K}(w_h)$ is from (\ref{omega-recovered}). Taking $\Pi(w_h)-\nabla u_h$
as a representation of the error $\nabla e_w$, then $\omega^{S,W}(t)$ estimates the
interpolation error of $e_w$ in the $L^2(\Omega)$-norm at time $t$. Moreover, by
Young's inequality we have
\begin{align*}
\eta^{S,W}(t)
&\leq\omega^{S,W}(t)+\left(\sum_K\|R^W_{K}(t)\|_{0,K}^2\right)^{1/2},
\end{align*}
so that $\omega^{S,W}$ dominates the estimator $\eta^{S,W}$ whenever the element
residual is small.

We illustrate with a numerical example. Let $\Omega=(0,100)\times(0,100)$,
$T=100$, and let $w$ solve (\ref{ode-lin}) with $f=-0.016875\tanh(x-0.25t-50)$,
$\mu=0.01$ and initial condition $w(0)=0$. The solution of this equation mimics
the behaviour of the recovery variable for the FitzHugh-Nagumo model. In
particular, the unknown $u$ in (\ref{FHN-recov-eq}) is replaced by
$\tanh(x-0.25t-50)$ to represent a traveling wave moving to the right with fixed
speed. The approximate solution is computed on a uniform mesh with $h=2.5$
($6400$ elements) and constant time step $\tau=1$. The exact error is estimated
by computing a reference solution on a much finer mesh with $h=0.15625$
($1638400$ elements). The approximate solution at time $t=100$ is plotted over
the line $y=50$ in Figure \ref{FHN-trav-errplots} (left), including the
superposition of the source term (right $y$-axis).

Figure \ref{FHN-trav-errplots} (right) plots the local distribution of the error
and estimators over the elements of the mesh, projected on the line $y=50$ at
time $t=100$. Note that the estimator $\eta^{S,W}_K(t)$ detects error primarily
in regions where the source term varies rapidly near $x=75$. This region
essentially acts as an ``activation'' region. Since there is no diffusion
involved in this equation, outside this activation region one does not expect a
large new contribution to the error as $t$ increases. On the other hand, we note
that the estimator $\omega^{S,W}(t)$ gives a local in time representation of the
exact error, and in this instance remains within an order of magnitude
throughout the domain. In particular, the relative size of the estimator
$\omega^{S,W}(t)$ in different regions of the domain reflects the local features
of the solution, such as at $x=45$ and $x=75$ where the solution transitions
from constant values.

\begin{figure}[ht]
  \centering
  \subfloat{
    \includegraphics[width=0.47\textwidth]{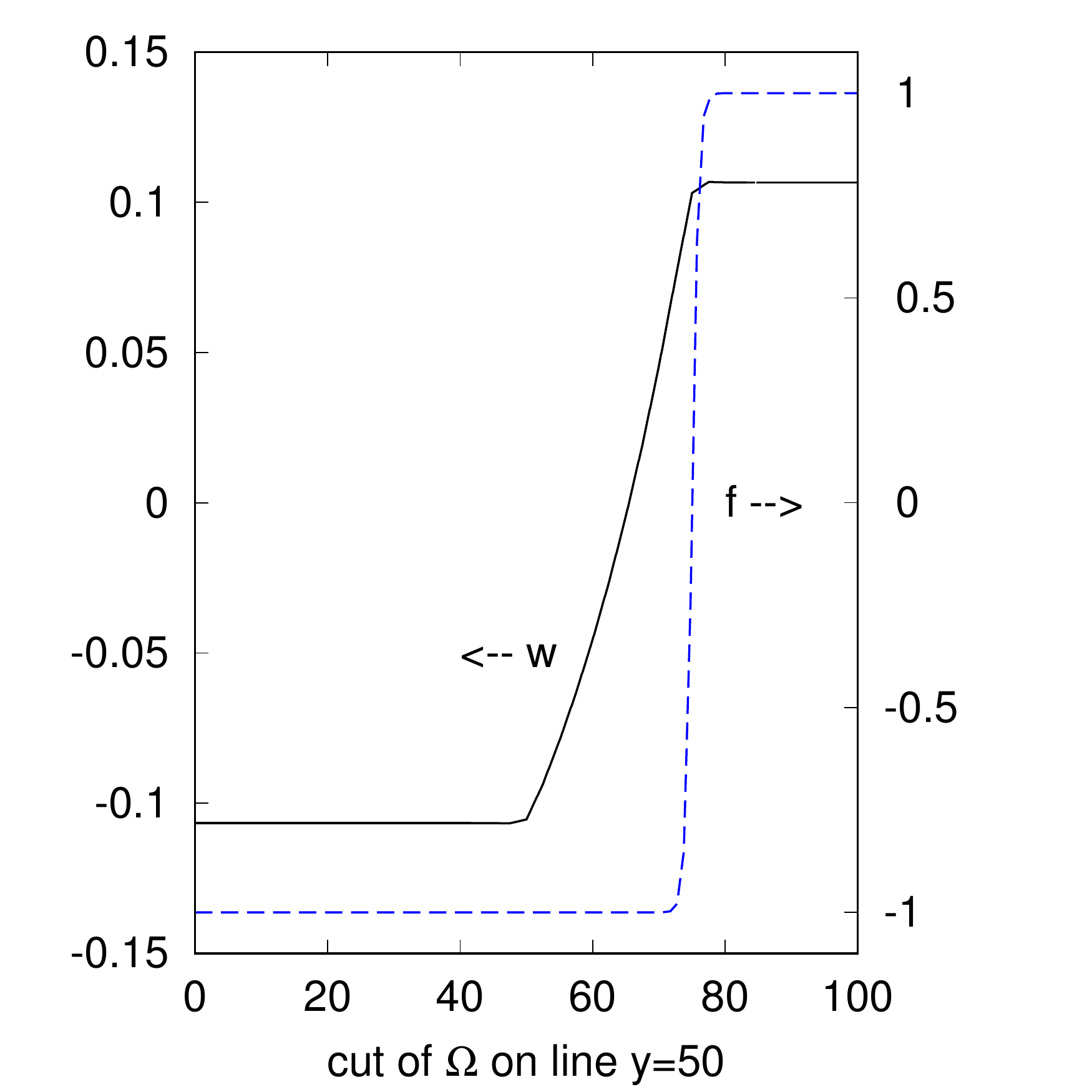}
  }
  \subfloat{
    \includegraphics[width=0.47\textwidth]{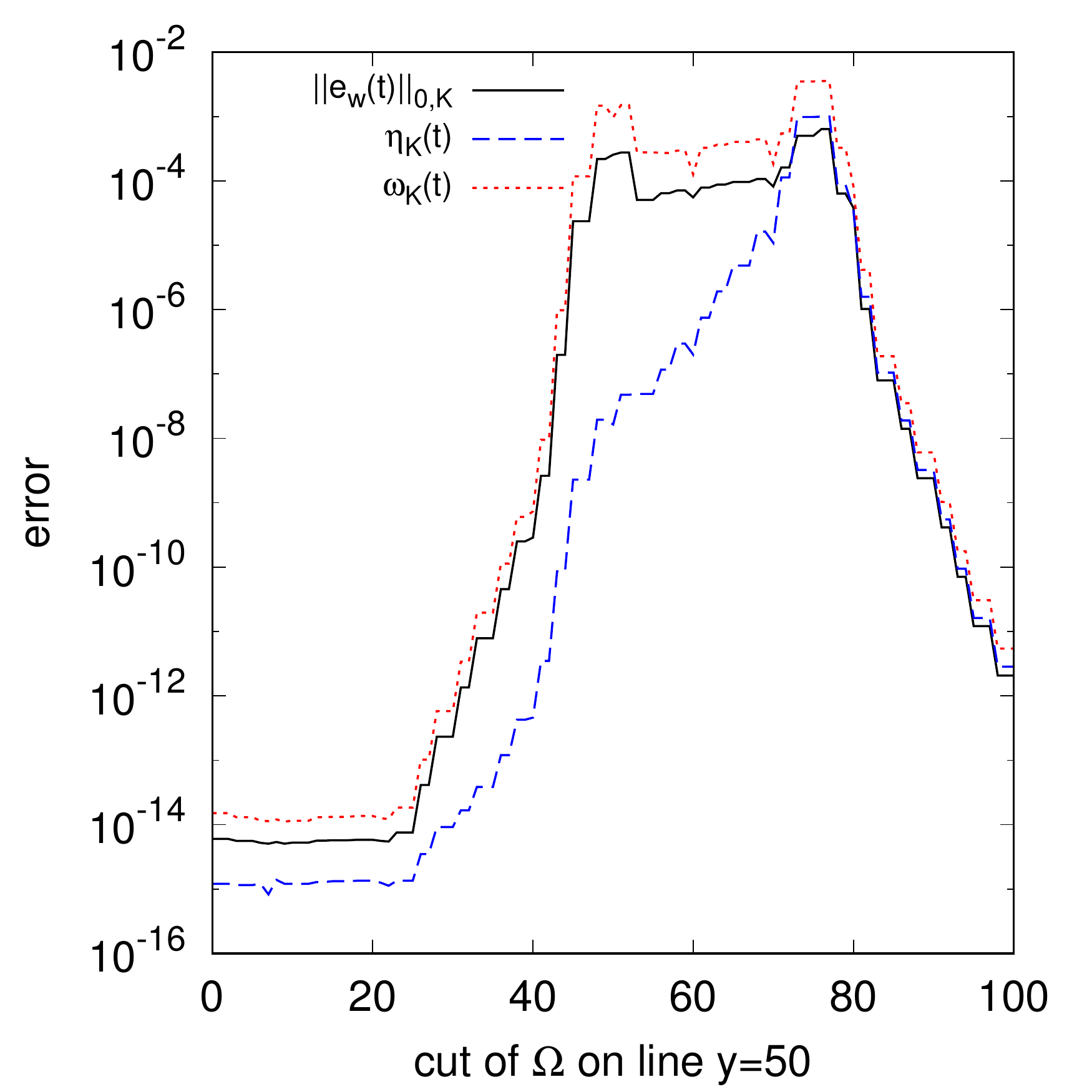}
  }
  \caption{Left: plot of approximate solution (left $y$-axis) and source term
    (right $y$-axis) over the line $y=50$ at time $t=100$. Right: plot of the
    error and estimators over the line $y=50$ at time $t=100$.}
  \label{FHN-trav-errplots}
\end{figure}

Based on the above observations, when applying mesh adaptation in Section
\ref{sec:numer-react} we will makes use of the estimator
(\ref{ode-recov-est-glob}) to control the interpolation error for the variable
$w$. Additionally, since the estimator $\eta^{S,W}$ only reports significant
error when the source term varies rapidly, and hence when $u$ varies rapidly, it
will be dropped from the computation. Note, however, that this assumption is
dependent on the ionic model used, including the use of external source terms.


\subsection{Adaptive algorithm}
\label{subsec:bdf2-alg}

\subsubsection{Scalar problem}
\label{subsubsec:algor-scal}

Let $TOL_S,\,TOL_T$ be positive constants, denoting the tolerance for space and
time error respectively. As in \cite{lozpicpra09}, the goal will be to adapt
both the mesh and time step in order to satisfy the following inequalities on
each interval
\begin{align}
  0.1875\,\,TOL_S\leq \frac{\eta_n^S}{\vertiii{u_h}_n}\leq
  0.75\,\,TOL_S,\label{eqn-TOL-S}\\
  0.5\,\,TOL_T\leq
  \frac{\eta_n^T}{\vertiii{u_h}_n}\leq 1.5\,\,TOL_T,\label{eqn-TOL-T}
\end{align}
where
$\vertiii{v}_n=\left(\int_{t_{n-1}}^{t_n}\noro{v(t)}{\Omega}^2\,\mathrm{d}t\right)^{1/2}$.
The choice of constants appearing in the upper and lower bounds in
(\ref{eqn-TOL-S}) may seem unusual, particularly the upper bound $0.75$ which is
less than $1$. However, we found that with the adaptation algorithm we describe
below, the error after adaptation was generally significantly lower than the
target error.

The mesh is adapted using a non-Euclidean metric, derived using the techniques
from \cite{micper06} that we briefly describe. The metric consists of a positive
definite matrix $\mathcal{M}_K$ corresponding to each element. The Jacobian
$J_K$ relates to the metric by the relation
$\mathcal{M}_K=\mathcal{R}_K^T\Lambda_K^{-2}\mathcal{R}_K$. Therefore, given the
prescribed global error tolerance $\widetilde{TOL},$ the idea is to determine a
new optimally defined element $\widetilde{K}$ by choosing new directions
$\tilde{r}_{1,K},\,\tilde{r}_{2,K}$ and aspect ratio
$\tilde{s}_K=\frac{\tilde{\lambda}_{1,K}}{\tilde{\lambda}_{2,K}}$ which minimize
$\eta_{\widetilde{K}}$ and scaling the area
$\tilde{\lambda}_{1,K}\tilde{\lambda}_{2,K}$ so that the error satisfies
$\eta_{\widetilde{K}}=\frac{\widetilde{TOL}}{\sqrt{N_T}}.$ Since the mesh
adaptation software we use in this paper, MEF++, requires the metric to be
defined on vertices, the metric needs to be averaged. We use the simple
averaging for each vertex $p$:
\[
\mathcal{M}_p=\frac{1}{N_p}\sum_{K\in \Delta_p}\mathcal{M}_K,
\]
where $\Delta_p$ is the patch of elements containing $p$ as a vertex, and $N_p$
is the number of elements of $\Delta_p$. Since the BDF2 method involves the
variables $u_h^{n-2},\,u_h^{n-1},\,u_h^n$, the mesh should be adapted to the
solution on the entire interval $[t_{n-2},t_n].$ Therefore, following an idea
from \cite{rio12}, for each sub-interval $[t_{s-1},t_s]$ we construct the metric
$\mathcal{M}_s$ under the condition $\frac{\eta^S}{\vertiii{u_h}_n}=TOL_S.$ Then
we take the average $\mathcal{M}^n=\frac{1}{3}\sum_{s=0}^2\mathcal{M}_{n-s}$.
We have made the common assumption that the edge residual $r_K$ dominates the
space residual $R_K$, and so have dropped the latter from the calculation, see
\cite{carver99},\,\cite{boupicalalos09},\,\cite{burpic03}.

The time-step adaptation is implemented with a standard method involving
multiplicative factors. After computing the solution $u_h^n$ the estimator
$\eta^T$ is computed. If (\ref{eqn-TOL-T}) is not satisfied, a multiplicative
factor $m>0$ determines a new step $\tau_{new}=m\tau.$ For our purposes, we
chose $m=0.67$ if the error was too large, and $m=1.5$ if the error was too
small. Note that the estimator is only defined for $n=3$ onwards. We therefore
cannot completely control the accumulation of error in the first two time
steps. We will assume that if the error does not satisfy the upper bound
(\ref{eqn-TOL-T}) for the third step, then it also does not satisfy the bound
for the first two steps. We therefore terminate the algorithm and restart using
a smaller time step.

In order to avoid asking for too much control of either the space or time error,
we fix the ratio $\frac{TOL_T}{TOL_S}=\mu.$ In \cite{lozpicpra09}, this ratio is
fixed at $\mu=1$. The choice of $\mu$ used in this paper will be discussed
further in Section \ref{sec:numer-react}, as it is observed that a suitable choice
depends on the particular problem being solved.

\begin{algorithm}[h]
  \caption{Space-time solution-adaptation loop.}
  \label{loop-rdiff}
  \begin{enumerate}
    \item \textit{Initialization: } Find a suitable starting mesh.
      \begin{enumerate}
      \item Repeat the following for a fixed number of iterations (5 or 10):
        \begin{enumerate}
        \item \label{itm:solve1}
          Interpolate $u_h^0=\pi_h(u_0)$ on the current mesh and
          solve for $u_h^1,\,u_h^2,\,u_h^3$.
        \item Adapt the mesh using $u_h^0,\,u_h^1,\,u_h^2,\,u_h^3$.
        \end{enumerate}
        \item Interpolate $u_h^0$ on the current mesh and solve for $u_h^1,\,u_h^2,\,u_h^3,\,u_h^4$.
      \end{enumerate}
    \item \textit{Space Loop: }\label{itm:space}
      \begin{enumerate}
        \item Compute $\eta_n^S.$
        \item If (\ref{eqn-TOL-S}) is satisfied, move on to step \ref{itm:time}.
        \item If (\ref{eqn-TOL-S}) is not satisfied, adapt the mesh using
          $u_h^{n-2},\,u_h^{n-1},\,u_h^n$.
        \item Interpolate $u_h^{n-2},\,u_h^{n-1}$ on the new mesh, and solve for
          $u_h^n,\,u_h^{n+1}.$  Go to step \ref{itm:space}.
      \end{enumerate}
    \item \textit{Time Loop: }\label{itm:time}
      \begin{enumerate}
        \item Compute $\eta_n^T.$
        \item If (\ref{eqn-TOL-T}) is satisfied, or if the ratio
          $\frac{\tau_n}{\tau_{n-1}}$ is too large or too small, compute
          $u_h^{n+1}$ and go to step
          \ref{itm:space}.
        \item If (\ref{eqn-TOL-T}) is not satisfied, adjust the time step,
          recompute $u_h^n$ and go to step (\ref{itm:time}).
      \end{enumerate}
  \end{enumerate}
\end{algorithm}


\subsubsection{Monodomain problem}
\label{subsubsec:algor-mono}

We outline the extension of the adaptive algorithm from the scalar problem to
the full monodomain problem. For mesh adaptation, choose positive constants
$TOL_S^U,\,TOL_S^W$. The goal is then to control the relative error:
\begin{align}
  0.25\,\,TOL_S^U\leq \frac{\eta_n^{S,U}}{\vertiii{u_h}_n}\leq
  TOL_S^U,\label{eqn-TOL-SU}\\
  0.25\,\,TOL_S^W\leq \frac{\omega_n^{S,W}}{\vertiii{w_h}_n}\leq
  0.65\,\,TOL_S^W.\label{eqn-TOL-SW}
\end{align}
As for the scalar case, we remark the constants appearing the upper and lower
bounds are highly implementation and problem dependent, and often chosen for
performance reasons. The mesh is adapted if either of the upper bounds in
(\ref{eqn-TOL-SU}) or (\ref{eqn-TOL-SW}) are violated, or if both of the lower
bounds are violated.  The mesh is adapted using a similar method as for the
scalar problem. However, instead of constructing separate metrics for each time
step $t_{n-2},\,t_{n-1},\,t_n$ as was done previously, here we construct for
each variable a metric $\mathcal{M}(u)$ and $\mathcal{M}(w)$ using the averages
$\frac{1}{3}(u_h^{n-2}+u_h^{n-1}+u_h^{n})$ and
$\frac{1}{3}(w_h^{n-2}+w_h^{n-1}+w_h^{n})$ respectively. Then we adapt the mesh
using the metric intersection $\mathcal{M}=\mathcal{M}(u)\cap\mathcal{M}(w)$.

The method for the time-step adaptation requires a small modification for the
monodomain system. Choose positive constants $TOL_T^U,TOL_T^W$. The time step is
decreased by a factor of $2/3$ if
\begin{align}
  \frac{\eta_n^{T,U}}{\vertiii{u_h}_n}>
  1.5 TOL_T^U,\quad\text{ or }\quad\quad
  \frac{\eta_n^{T,W}}{\vertiii{w_h}_n}>
  1.5 TOL_T^W,\label{eqn-TOL-T-up}
\end{align}
and the time step is increased by a factor of $3/2$ if
\begin{align}
  \frac{\eta_n^{T,U}}{\vertiii{u_h}_n}<
  0.5 TOL_T^U,\quad\text{ and }\quad\quad
  \frac{\eta_n^{T,W}}{\vertiii{w_h}_n}<
  0.5 TOL_T^W.\label{eqn-TOL-T-low}
\end{align}


\section{Numerical results}
\label{sec:numer-react}

All numerical computations in this section are performed with the MEF++ software
developed by GIREF \cite{gir}. The first two test cases are for a scalar
reaction-diffusion problem. The first verifies the equivalence of the error and
estimator and establishes the proper order of convergence of the estimators when
performing the adaptive algorithm. The second considers the issue of efficiency
of the adaptive method. Test cases 3 and 4 are for the monodomain problem. Test
case 3 verifies the reliability of the estimators for the error in space, and
assess the mesh adaptation algorithm. This includes a comparison of efficiency
when solving with mesh adaptation vs. solving on a uniform mesh. The last test
case illustrates the space-time adaptation method applied to a problem with
realistic time scales for a heart beat.


\subsection{Test case 1}
\label{subsec:dep5}


Let $\Omega=(0,1)\times(0,1),$ $T=0.04,$ and take $u$ to be the solution to
\begin{equation}
  \label{eqn:dep5}
  \left\{
    \begin{array}{ll}
      \frac{\partial u}{\partial t}-\Delta u+10^4u(u-1)(u-0.25)=0,&\quad\text{in }(0,T)\times\Omega, \\
      \nabla u\cdot n=0,&\quad \text{in }(0,T)\times\partial\Omega,\\
      u(0)=e^{-100(x^2+y^2)},&\quad \text{in }\Omega.
    \end{array}
  \right.
\end{equation}
The solution is a traveling wave with a circular profile, moving upwards and
right from the bottom left corner until $u\approx1$ in the entire domain (see
Figure \ref{fig-de5-contour-mesh}). We plot the space and time error estimators
as a function of time (Figure \ref{fig-dep5-err-uniform}) on a relatively fine
uniform mesh with $12800$ elements ($h=0.0125$), and with constant time step
$\tau=0.0004.$ The error estimator in space follows more or less the area of the
wave-front, increasing until the front hits the top and right boundaries, and
decreasing as the wave exits the domain. The time estimator behaves similarly,
but has two peaks, corresponding to the wave hitting the boundary at $t=0.027$
and just as it exits at $t=0.038$.

\begin{figure}[ht]
  \centering
    \includegraphics[width=0.45\textwidth]{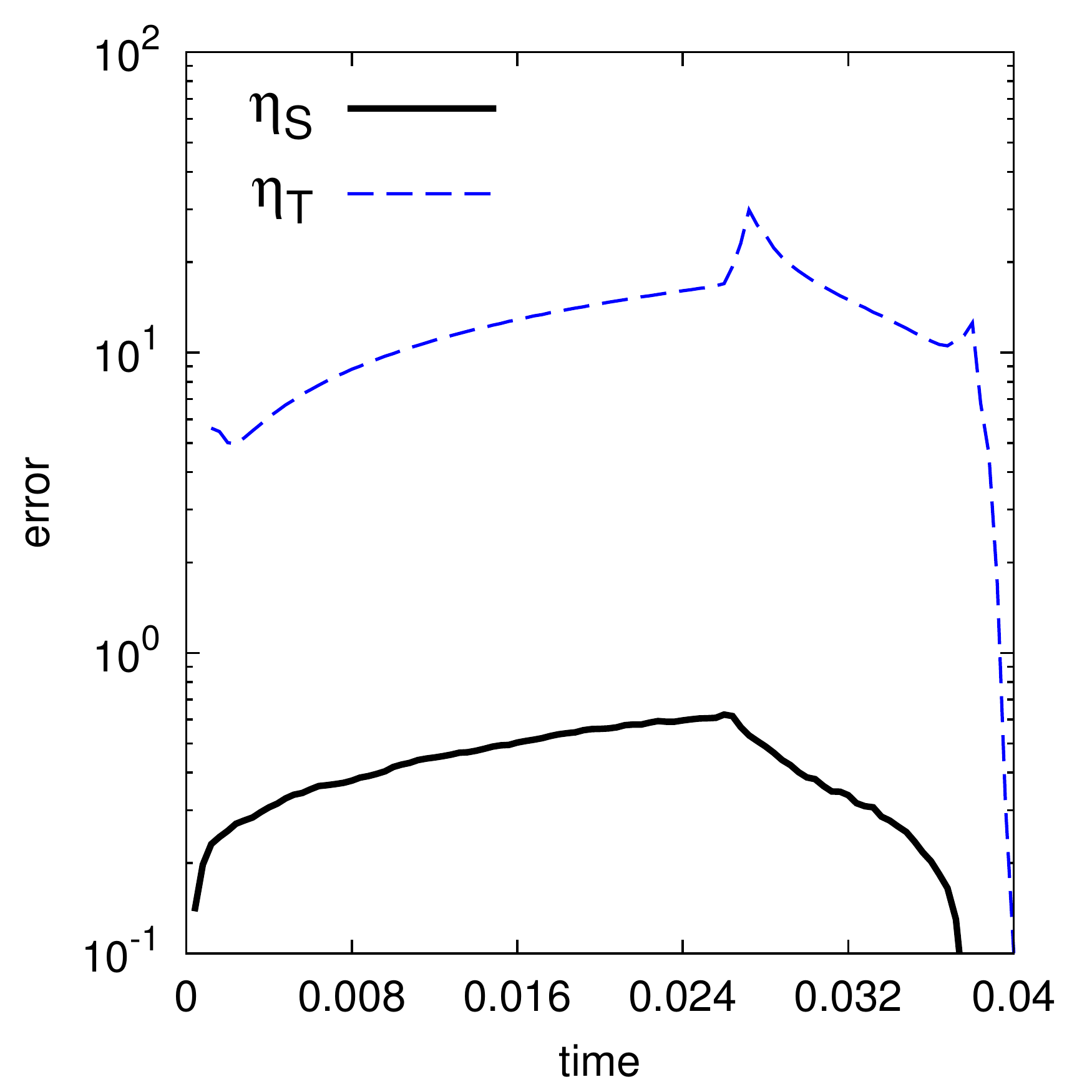}
  \caption{Test case 1: plot of the error estimators in time for the solution on a
    uniform mesh.}
  \label{fig-dep5-err-uniform}
\end{figure}


\subsubsection{Effectivity index}
\label{subsubsec:eff-ind}

As in \cite{lozpicpra09}, we define the effectivity indices
\begin{align*}
  ei
  =\frac{\left((\eta^S)^2+(\eta^T)^2\right)^{1/2}}{\vertiii{e_h}},
  \quad\quad
  ei^S=\frac{\eta^S}{\vertiii{e_h}},
  \quad\quad
  ei^T=\frac{\eta^T}{\vertiii{e_h}},
\end{align*}
where
\[
  \vertiii{v}=\left(\int_{0}^{T}\noro{v(t)}{\Omega}^2\,\mathrm{d}t\right)^{1/2},\quad
  \eta^S=\left(\sum_{n\geq 1}\left(\eta_n^S\right)^2\right)^{1/2},\quad
  \eta^T=\left(\sum_{n\geq 3}\left(\eta_n^T\right)^2\right)^{1/2}.
\]
In the absence of an exact solution, we approximate the effectivity index with
the use of a reference solution. Normally, a reference solution is computed on a
sufficiently fine uniform mesh with a very small time step. However, we found
that for this example, computing a reference solution with a uniform mesh is
impractical, as we illustrate below. For fixed time step $\tau=0.0002,$ denote
by $u_{\Delta t}$ the semidiscrete in time solution to (\ref{eqn:dep5}), and
$u_{H\Delta t}$ the fully discrete solution computed on a uniform mesh with
$H=0.025.$ Table \ref{tab-refcalc-unif} illustrates the approximation of the
error $|(u_{H\Delta t}-u_{\Delta t})(0.01)|_{1,\Omega}$ by the value
$e_H^{h,ref}=|(u_{H\Delta t}-u_{h\Delta t})(0.01)|_{1,\Omega}$, where
$u_{h\Delta t}$ is the fully discrete solution computed on various uniform
meshes, and solutions using space adaptation. If we assume that the error at
time $t=0.01$ is $O(H)$ and $O(h)$, respectively, then at worst
\[
  \frac{\left|e_H^{h,ref}-|(u_{H\Delta t}-u_{\Delta
        t})(0.01)|_{1,\Omega}\right|}{|(u_{H\Delta t}-u_{\Delta
      t})(0.01)|_{1,\Omega}}
  \leq\frac{|(u_{h\Delta t}-u_{\Delta
      t})(0.01)|_{1,\Omega}}{|(u_{H\Delta t}-u_{\Delta t})(0.01)|_{1,\Omega}}
\]
is $O\left(\frac{h}{H}\right).$ Unless $h<<H,$ one cannot trust the accuracy of
the reference solution. With $H=0.025,$ in this case a mesh with $6400$
elements, this restriction on $h$ calls for a uniform mesh with millions of
elements, and unrealistic demands for memory and CPU usage. On the other hand,
with adapted meshes the approximate error approaches the value $0.953$ in
reasonable CPU time. In what follows, the reference solution will be computed
using space adaptation with $TOL_S=0.015625$ and with a finer time step
$\tau_{ref}=2.5\times 10^{-5}$. The resulting meshes range from $43000$ elements
at time $t=0$ to $180000$ elements at $t=0.01$.

\begin{table}[ht]
  \footnotesize
  \centering
  \begin{tabular}{c|*{3}{c}|*{6}{c}}
    \multicolumn{1}{c|}{}&\multicolumn{3}{c|}{h (uniform meshes)}&\multicolumn{5}{c}{$TOL_S$
                                                        (adapted meshes)}\\
    \multicolumn{1}{c|}{}&0.0125 &0.00625 &0.003125 &0.25  &0.125 &0.0625 &0.03125 &0.015625\\
    \hline 
    $e_H^{h,ref}$       &0.7380  &0.8992   &0.9396    &0.9487 &0.9541 &0.9556 &0.9530 &0.9531\\
    max. elements  &25600  &102400  &409600   &1113  &3431  &10450 &42172 &160563\\
    CPU time (sec)  &168    &871     &5229     &26    &65    &202   &878   &4089\\
  \end{tabular}
  \caption{Approximation of error in space at $t=0.01$ with $H=0.025,\,\tau=0.0002$.}
  \label{tab-refcalc-unif}
\end{table}

We compute the error and effectivity indices for a few values of $h$ and $\tau$
in Table \ref{tab-eff-unif}. The effectivity index $ei$ remains at a good value
($1\leq ei\leq 10$) provided the space and time estimators remain close in
magnitude. Moreover, the index $ei^S$ remains close to $1$. The effectivity
index increases as we decrease $h$ and $\tau$. For the coarse mesh $h=0.025$ the
solution has not yet reached the asymptotic convergence, noting that the error
decreases by a factor of $6$ (first and last rows) instead of the theoretic
factor of $4$ predicted by the general theory. The evolution of the error in
time is shown in Figure \ref{fig-errtime-unif2}. Note that the space and time
error estimators grow at the same rate for each value of $h,\,\tau$, while the
exact error grows slower for the lower value of $h,\,\tau.$ As a result, the
growth of the exact error is more closely captured by the estimators for finer
mesh and time step.

\begin{table}[ht]
  \footnotesize
  \centering
  \begin{tabular}{c
    S[table-format=1.0(0)e1,round-precision = 2]
    *{8}{c}}
    $h$     &$\tau$ &$\vertiii{e_h}$ &$\eta^S$ &$\eta^T$  &$ei$ &$ei^S$ &$ei^T$\\
    \hline
    0.025   &0.0002   &0.0582          &0.0494   &0.199    &3.52 &0.849 &3.42\\
    0.025   &0.0001   &0.0560          &0.0497   &0.0528   &1.29 &0.888 &0.943\\
    0.0125  &0.0002   &0.0240          &0.0247   &0.199    &8.34 &1.03  &8.29\\
    0.0125  &0.0001   &0.0216          &0.0250   &0.0531   &2.72 &1.16  &2.46\\
    0.00625 &0.0001   &0.00966         &0.0124   &0.0533   &5.66 &1.28  &5.52\\
  \end{tabular}
  \caption{Error and effectivity indices for uniform mesh and constant time step.}
  \label{tab-eff-unif}
\end{table}


\begin{figure}[ht]
  \centering
  \subfloat[${h=0.025,\,\tau=0.0002.}$]{
    \label{fig-errtime-unif5}
    \includegraphics[width=0.45\textwidth]{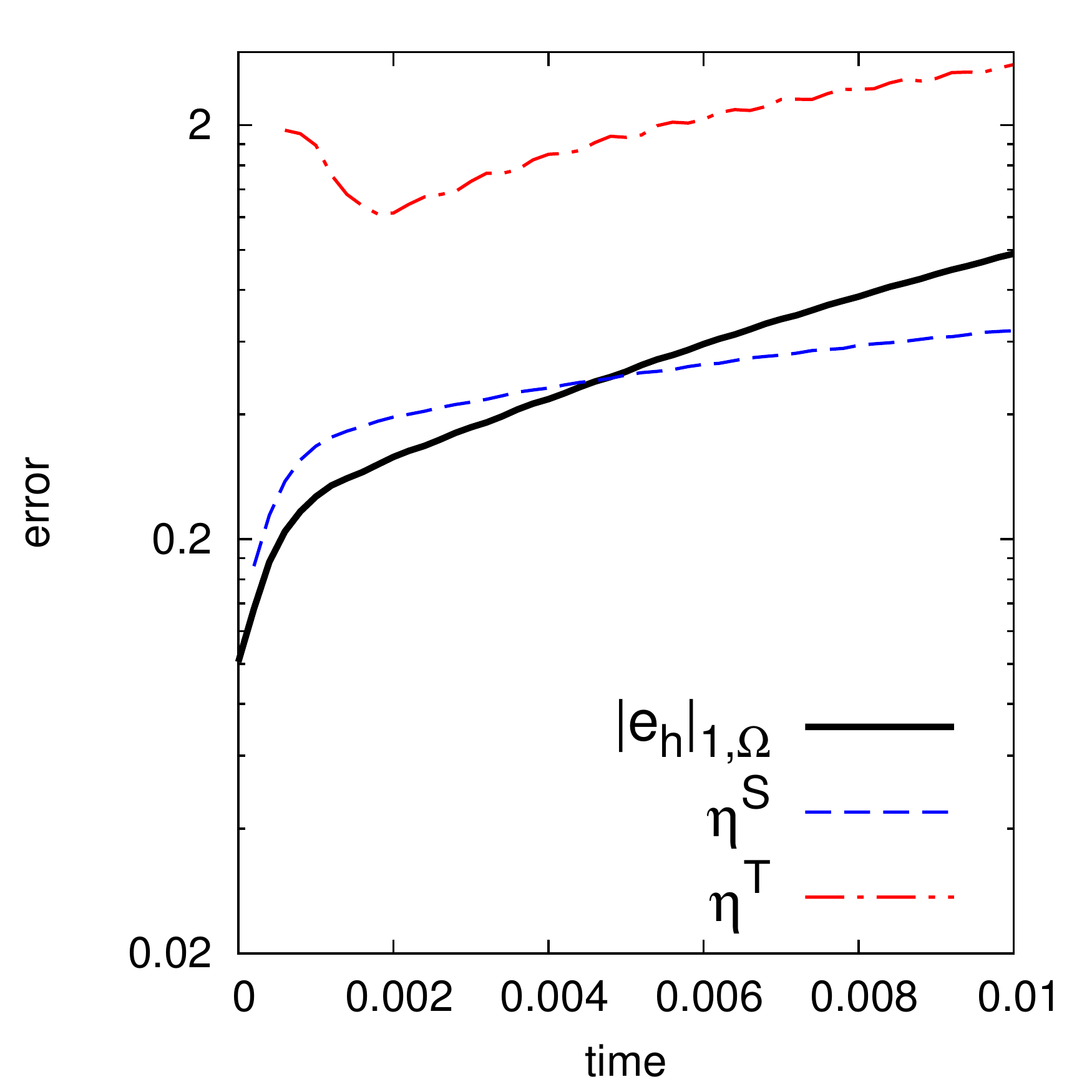}
  }
  \subfloat[$h=0.00625,\,\tau=0.0001.$]{
    \label{fig-errtime-unif14}
    \includegraphics[width=0.45\textwidth]{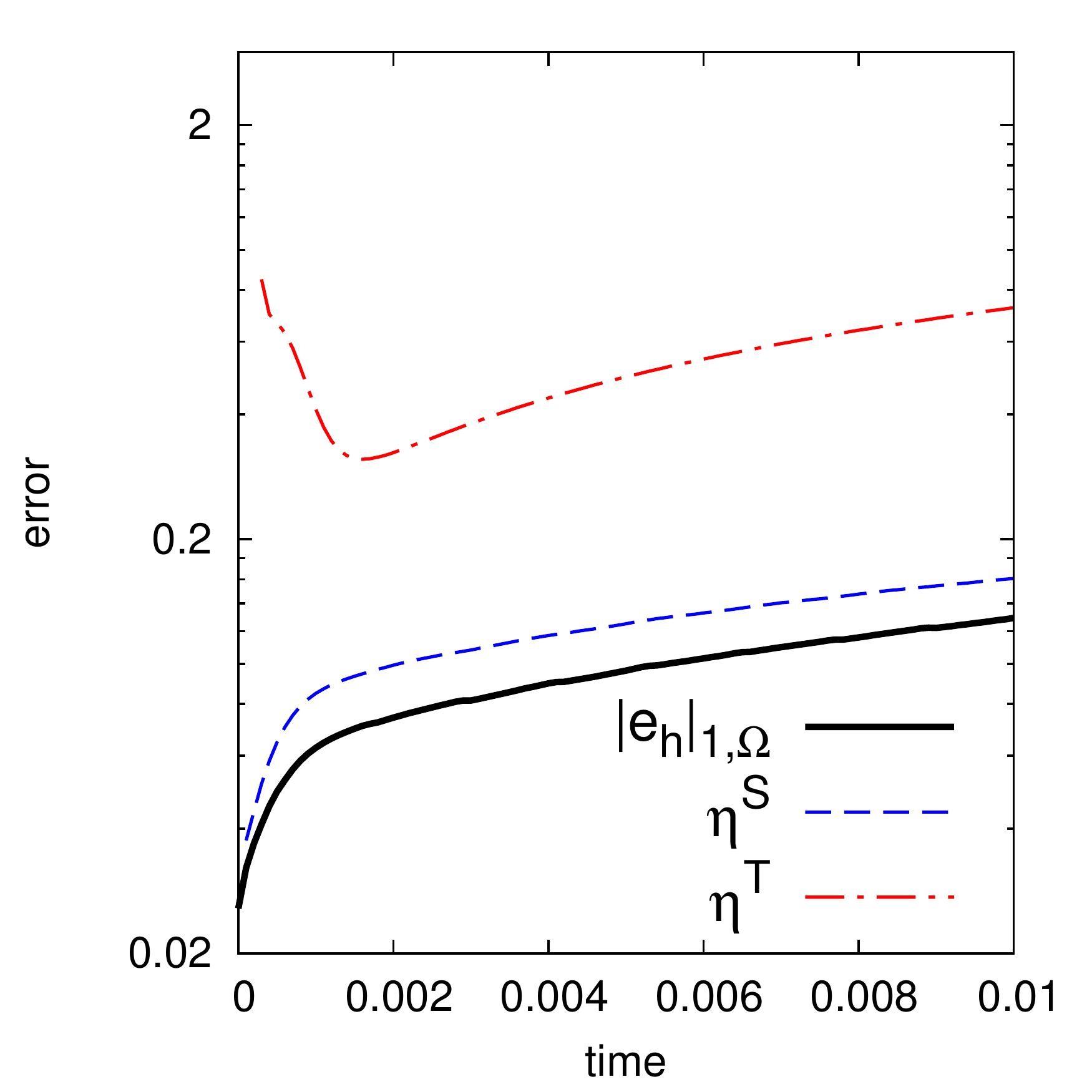}
  }

  \caption{Plot of the exact and estimated error in time for uniform mesh and constant time step.}
  \label{fig-errtime-unif2}
\end{figure}

\begin{remark} \normalfont
  If the reference solution is computed on an adapted mesh $\mathcal{T}_h$, the
  integral is approximated by interpolating the gradient of $u_{H\Delta t}$ at
  the Gauss points of $\mathcal{T}_h$. To verify the accuracy of the integral,
  we apply successive subdivisions of the quadrature formula. That is, the
  quadrature rule on each element is obtained by splitting the element into four
  copies by dividing each edge in half. If the initial quadrature rule is
  accurate order of $h^\ell$, the resulting subdivided rule is accurate of order
  $\left(\frac{h}{2}\right)^\ell$. Generally, on a fine adapted mesh, the
  difference is less than $1$\% after $3$ subdivisions.
\end{remark}


We now address the optimality of the time estimator. In Table
\ref{tab-dep5-opterr} we compute the estimators on uniform meshes and time
steps, and refine with a fixed ratio $\frac{\tau^2}{h}.$ We conclude that
asymptotically, the space error $\eta^S$ converges $O(h)$, the time estimators
$\eta^{i,T}$ for $i=1,\,3,\,4$ converge $O(\tau^2),$ while the estimator
$\eta^{2,T}$ converges $O(h\tau).$ It was observed in \cite{akrcha10} that the
estimator may be of sub-optimal order for a given ODE system when the first step
is computed with backward Euler. However, in our case we are not interested in
the best approximation of the semidiscretization in space, but the approximation
in both space and time of the PDE. We expect that the behaviour of the
estimators in these two situations may be different.

\begin{table}[ht]
  \footnotesize
  \centering
  \begin{tabular}{c
    S[table-format=1.0(1)e2,round-precision = 2]
    *6{c}}
    $h$ &$\tau$ &$\eta^S$ &$\eta^{1,T}$ &$\eta^{2,T}$ &$\eta^{3,T}$
    &$\eta^{4,T}$ &$\eta^T$\\
    \hline
    0.1     &8.00e-4 &0.168  &0.0126   &0.0881  &2.14   &3.49   &4.09\\
    0.05    &5.66e-4 &0.139  &0.00836  &0.0253  &0.875  &1.34   &1.60\\
    0.025   &4.00e-4 &0.0902 &0.00483  &0.00861 &0.436  &0.603  &0.744\\
    0.0125  &2.83e-4 &0.0492 &0.00259  &0.00308 &0.246  &0.289  &0.380\\
    0.0625  &2.00e-4 &0.0251 &0.00138  &0.00113 &0.137  &0.146  &0.201\\
    0.03125 &1.41e-4 &0.0126 &$7.11\times 10^{-4}$ &$4.06\times 10^{-4}$ &0.0732 &0.0731 &0.103\\
    \hline
    0.1     &2.00e-4 &0.162  &0.00118  &0.0309   &0.373   &0.289   &0.473\\
    0.05    &1.41e-4 &0.141  &$6.85\times 10^{-4}$ &0.00781  &0.107   &0.0938  &0.142\\
    0.025   &1.00e-4 &0.0917 &$3.58\times 10^{-4}$ &0.00239  &0.0354  &0.0396  &0.0532\\
    0.0125  &7.07e-5 &0.0497 &$1.82\times 10^{-4}$ &$8.33\times 10^{-4}$ &0.0190  &0.0189  &0.0268\\
    0.0625  &5.00e-5 &0.0252 &$9.28\times 10^{-5}$ &$2.96\times 10^{-4}$ &0.0101  &0.00941 &0.0138\\
    0.03125 &3.53e-5 &0.0126 &$4.65\times 10^{-5}$ &$1.04\times 10^{-4}$ &0.00523 &0.00468 &0.00702\\
  \end{tabular}
  \caption{Error estimators for uniform mesh and constant time step.}
  \label{tab-dep5-opterr}
\end{table}


\subsubsection{Dealing with overshoot of the time estimator}
\label{subsubsec:oscil}

We address a technical difficulty in implementing the space-time adaptation
algorithm. The time estimator is observed to strongly spike at times when the
mesh is adapted. To assess what is happening, we apply mesh adaptation with a
constant time step for various levels of $TOL_S$ and various time steps. We
found that for fixed $TOL_S,$ as the time step is decreased, the magnitude of
the overshoot increases (see top row of Figure \ref{fig-dep5-oscil}). Moreover,
from the bottom row of Figure \ref{fig-dep5-oscil} we see that the overshoot of
the estimator does not reflect the nature of the true error, and therefore,
decreasing the time step in order to attempt to control the time estimator would
not be worthwhile. The most likely explanation for the overshoot is that
interpolating the solution on the new adapted mesh introduces high frequency
transients, which are quickly damped. While the finite element solution itself
remains good, the estimator $\eta^T$ is built using finite difference schemes
for second and third-order derivatives, so the transients are magnified by small
time steps. In Figure \ref{fig-dep5-oscil}, note that the overshoot is largest
for $\eta^{3,T}$, which requires the solution at four different time steps. As
$TOL_S$ is decreased, the interpolation error decreases, so smaller time steps
may be taken. To apply the space-time adaptation algorithm, this implies that
the ratio $\frac{TOL_T}{TOL_S}$ cannot be taken too small. In practice, for this
test case, we found that the algorithm could be used with $\frac{TOL_T}{TOL_S}$
as low as $8,$ however, this is still unnecessarily restrictive. In Figure
\ref{fig-dep5-oscil}, we observe that the dominant terms are $\eta^{3,T}$ and
$\eta^{4,T}$, and that after the initial overshoot, $\eta^{3,T}$ quickly settles
back to the level of $\eta^{4,T}$. Moreover, note that in Table
\ref{tab-dep5-opterr} for uniform meshes, the column for $\eta^{3,T}$ is closely
matched by that for $\eta^{4,T}.$ In what follows, we replace $\eta^{T}$ with the
modified estimator:
\begin{align}\label{eq-etaT-mod}
  \tilde{\eta}^T
  &=\left((\eta_n^{1,T})^2+(\eta_n^{2,T})^2+(\eta_n^{4,T})^2\right)^{1/2},
\end{align}
While oscillations are also observed in $\eta^{1,T},$ they are small relative to
$\eta^{4,T}$. Therefore, including $\eta^{1,T}$ in the estimator did not result
in oscillations in (\ref{eq-etaT-mod}). Another possible solution, which was
not explored here, would be to use a more accurate interpolation operator as was
done in \cite{boupic13}.

\begin{figure}
  \centering
  \subfloat{
    \includegraphics[width=0.45\textwidth]{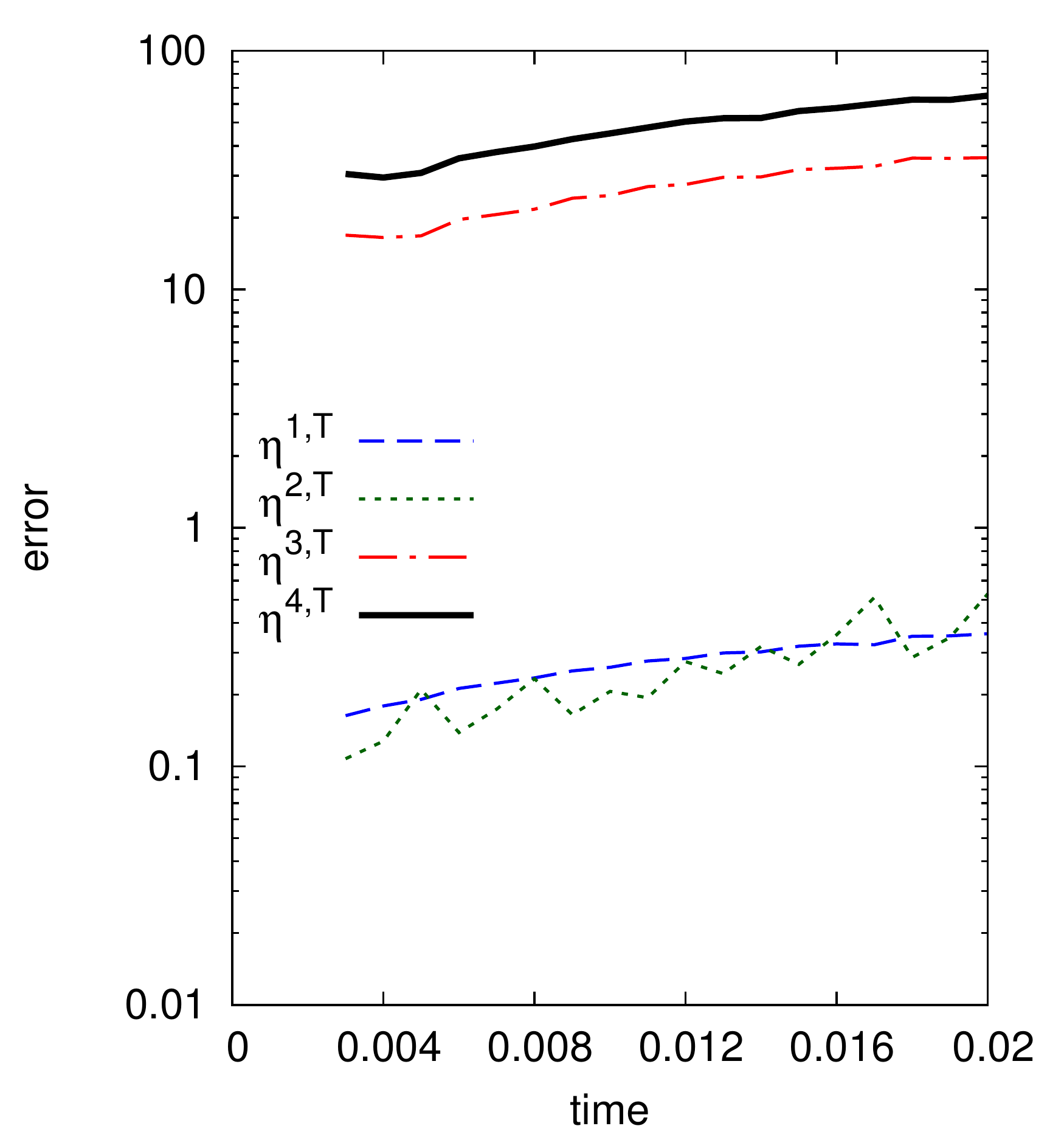}
  }
  \subfloat{
    \includegraphics[width=0.45\textwidth]{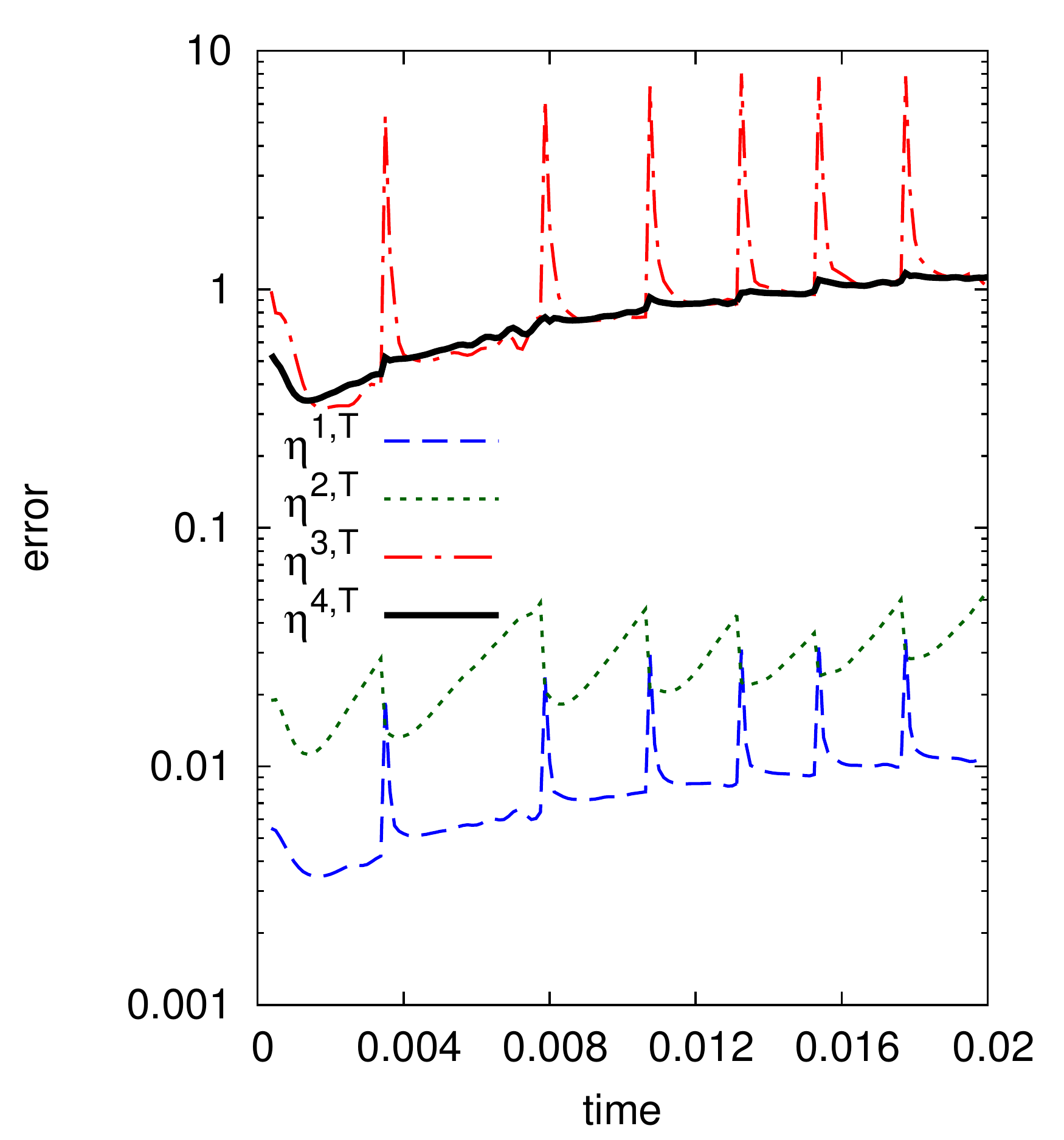}
  }
  \\
  \subfloat{
    \includegraphics[width=0.45\textwidth]{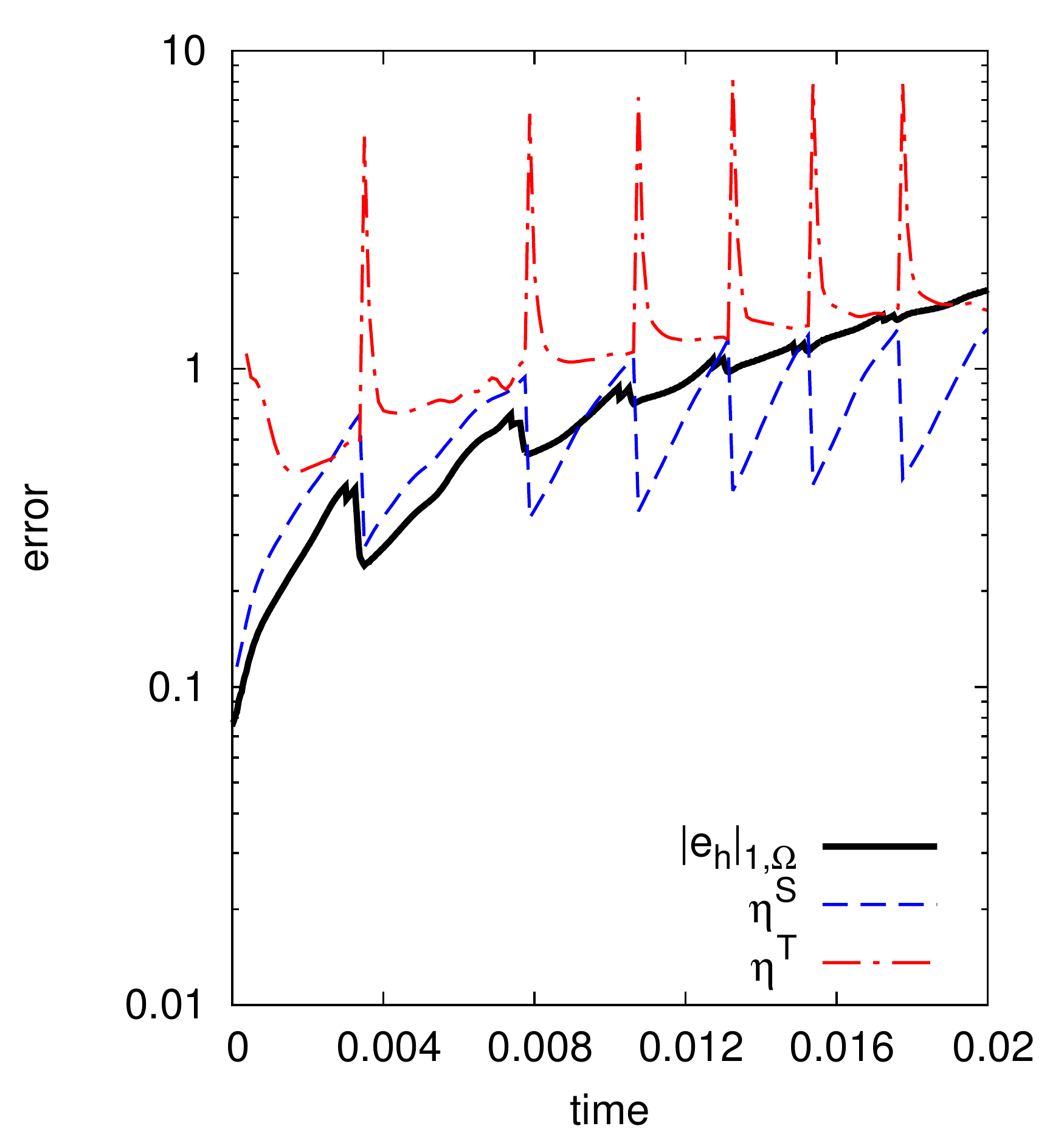}
  }
  \subfloat{
    \includegraphics[width=0.45\textwidth]{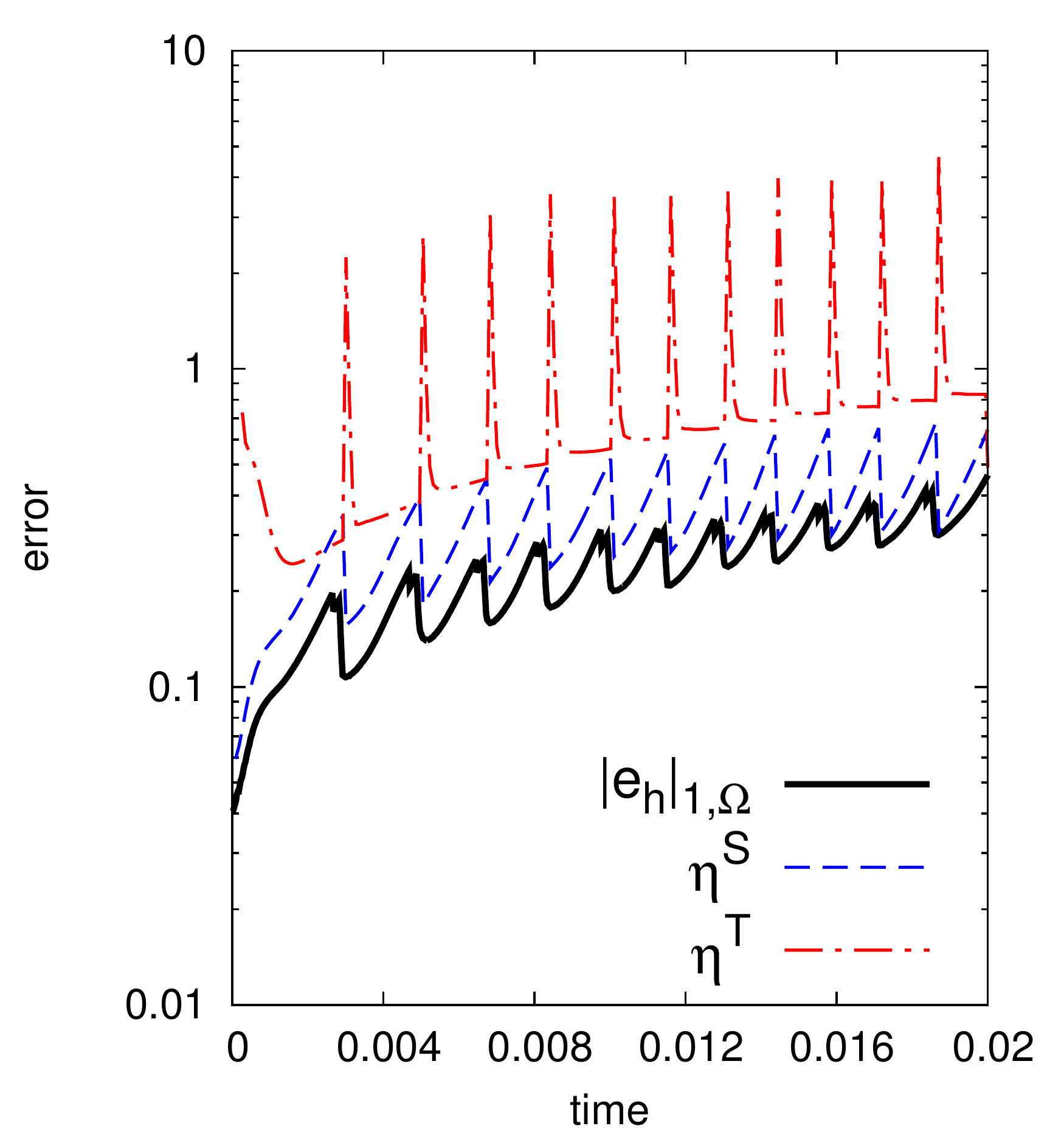}
  }
  \caption{Test case 1. Top: evolution of time estimator applying mesh
    adaptation with $TOL_S=0.5$ with constant time step $\tau=0.001$ (left) and
    $\tau=0.000125$ (right). Bottom: evolution of the estimators and exact
    error for $TOL_S=0.5,\,\tau=0.000125$ (left) and
    $TOL_S=0.52,\,\tau=8.85625\times 10^{-5}$ (right).}
  \label{fig-dep5-oscil}
\end{figure}


\subsubsection{Space-time adaptation}
\label{subsubsec:dep5-adapt}

We fix the ratio $\frac{TOL_T}{TOL_S}=0.75$ and apply space-time adaptation
(Algorithm \ref{loop-rdiff}). Note that as the wave exits the domain, the
normalizing factor $|u_h|_{1,\Omega}$ appearing in (\ref{eqn-TOL-S}) quickly
decreases to zero. To avoid pathological behaviour, we use the modified form
$\max(|u_h|_{1,\Omega},1)$. Examples of adapted meshes and the solution are
shown in Fig. \ref{fig-de5-contour-mesh}. The majority of the elements are
located near the wave-front and elongated parallel to the wave.  Note that the
mesh is somewhat coarser near the center of the wave, where the wave is nearly
linear in the direction orthogonal to the wave. Figure
\ref{fig-dep5-adapt-step-err} shows the evolution of the error estimators, the
time step, and the number of elements for a complete computation using
$TOL_S=0.125,\,TOL_T=0.09375$. The adaptation maintains the relative space and
time error near a constant value until the normalizing factor $|u_h|_{1,\Omega}$
quickly drops off as the wave exits the domain. Recall from Figure
\ref{fig-dep5-err-uniform} for a uniform mesh, the space and time estimators
slowly increase and decrease as the surface of the wave-front increases and
eventually exits. This behaviour is reflected in the adaptive algorithm with a
slow growth and decrease in the number of elements, and in the fact that the
time step is mostly constant. Moreover, the decreases in the time step and
corresponding spikes in the time estimator coincide with the wave hitting the
boundary. The oscillation in $\eta^S$ is a reflection of the mesh adaptation,
with the error dropping off suddenly after adaptation, and growing quickly as
the wave-front moves beyond the refined region. In Table
\ref{tab-est-adapt-dep5} it is shown that the estimated error terms converge
proportionally to $TOL_S,\,TOL_T$ when applying the space-time adaptation
algorithm. In Table \ref{tab-stats-dep5} we collect some additional statistics.
Columns three, four and six address the efficiency of the time-step adaptation
algorithm. From column three, we note that the number of time-step modifications
is essentially independent of the error tolerance. Recall that each time the
time step is modified, the current time step needs to be recomputed after which
condition (\ref{eqn-TOL-T}) is checked once again. The current time step may
possibly need to be recomputed several times before the error condition is
satisfied. Column four collects the total number of times that a time step needs
to be recomputed, and we conclude that in this case, no additional
recomputations are required to satisfy the error condition. Then from column 6,
it is shown that the total percentage of CPU time spent computing the time
estimator is low (no more than $0.3$\%). From column five, we observe that the
total number of time steps has a moderate growth as a function of
$TOL_T^{-1}$. More precisely, as we decrease $TOL_T$ by $2$, the number of time
steps increases by approximately a factor of $\sqrt{2}$ (as was observed in
\cite{lozpicpra09} for the Crank-Nicolson method). This is consistent with the
fact that the BDF2 method is second-order accurate. Lastly, note that the number
of mesh adaptations grows sublinearly as $TOL_S$ is decreased.

\begin{figure}
  \centering
  \subfloat{
    \label{fig-dep5-cont-start}
    \includegraphics[width=0.32\textwidth]{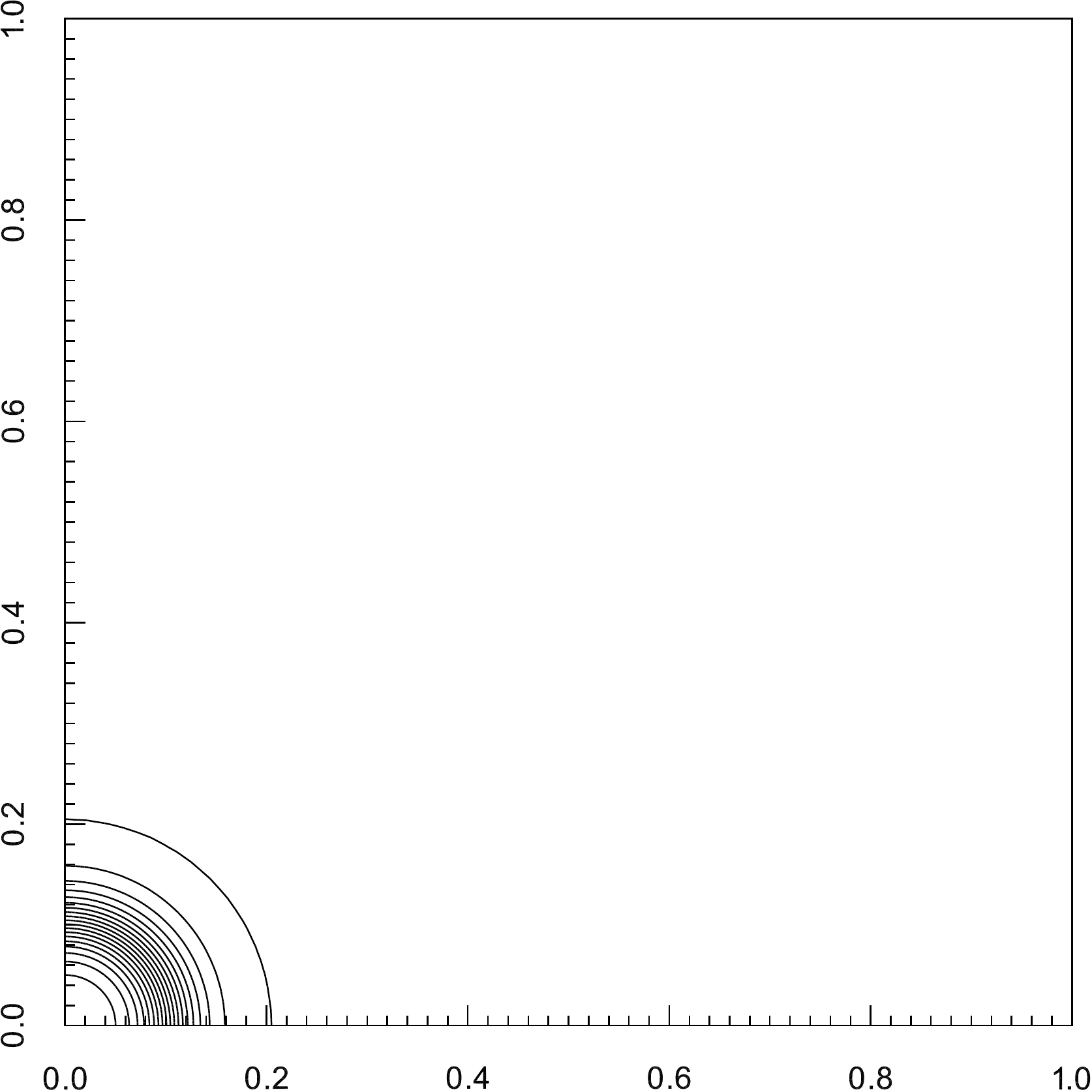}
  }
  \subfloat{
    \label{fig-dep5-mesh-start}
    \includegraphics[width=0.32\textwidth]{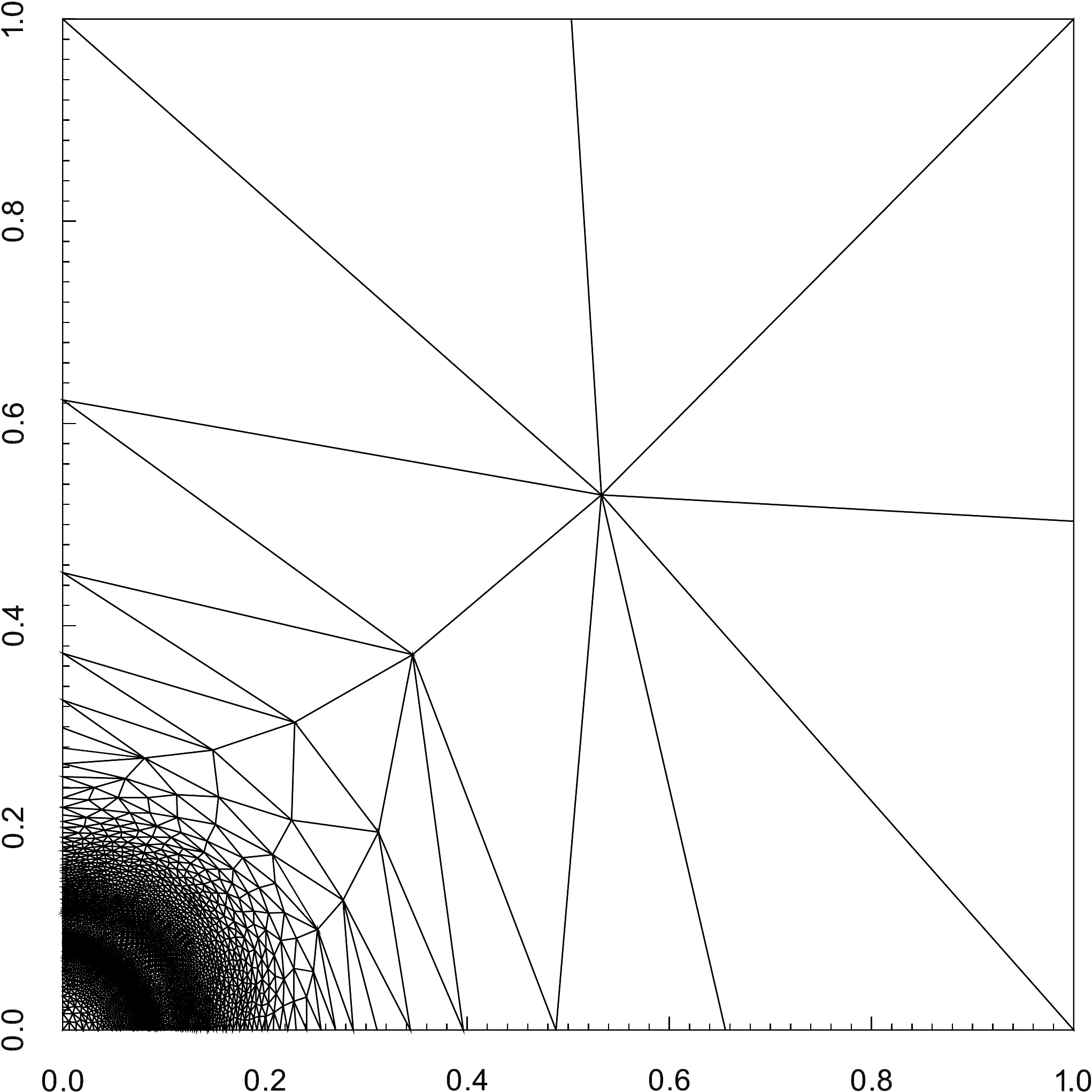}
  }
  \\
  \subfloat{
    \label{fig-dep5-cont-bound}
    \includegraphics[width=0.32\textwidth]{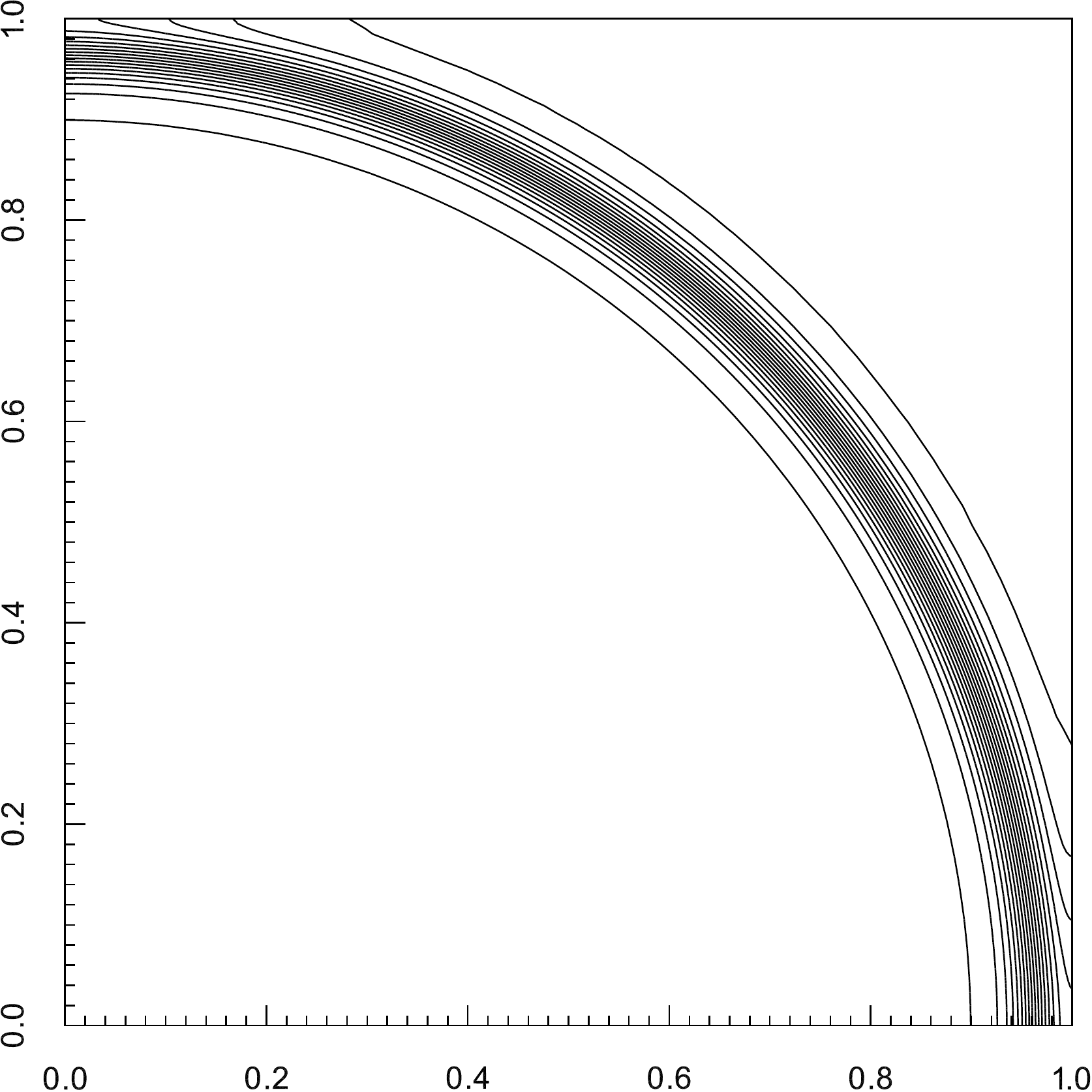}
  }
  \subfloat{
    \label{fig-dep5-mesh-bound}
    \includegraphics[width=0.32\textwidth]{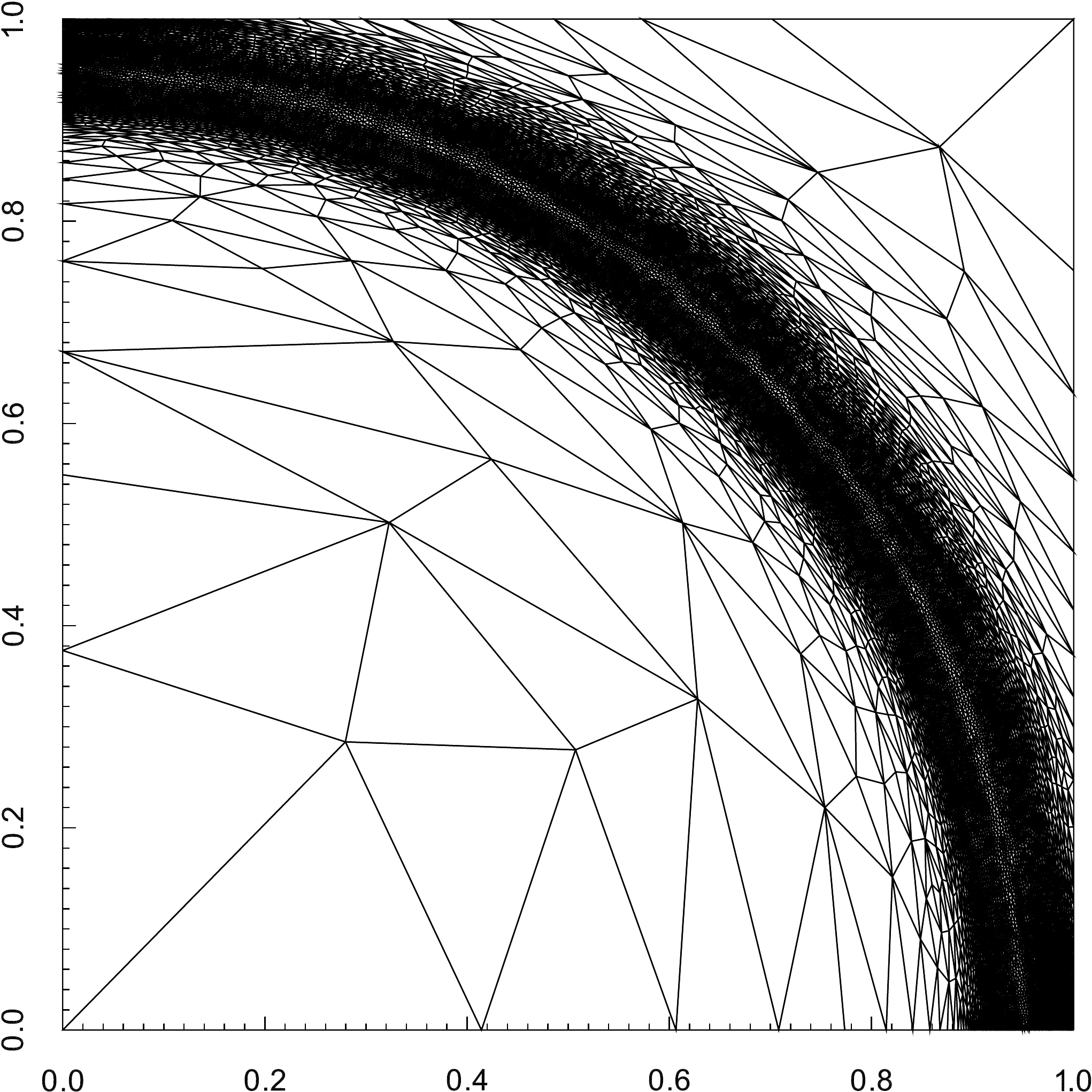}
  }
  \subfloat{
    \label{fig-dep5-mesh-bound-zoom}
    \includegraphics[width=0.32\textwidth]{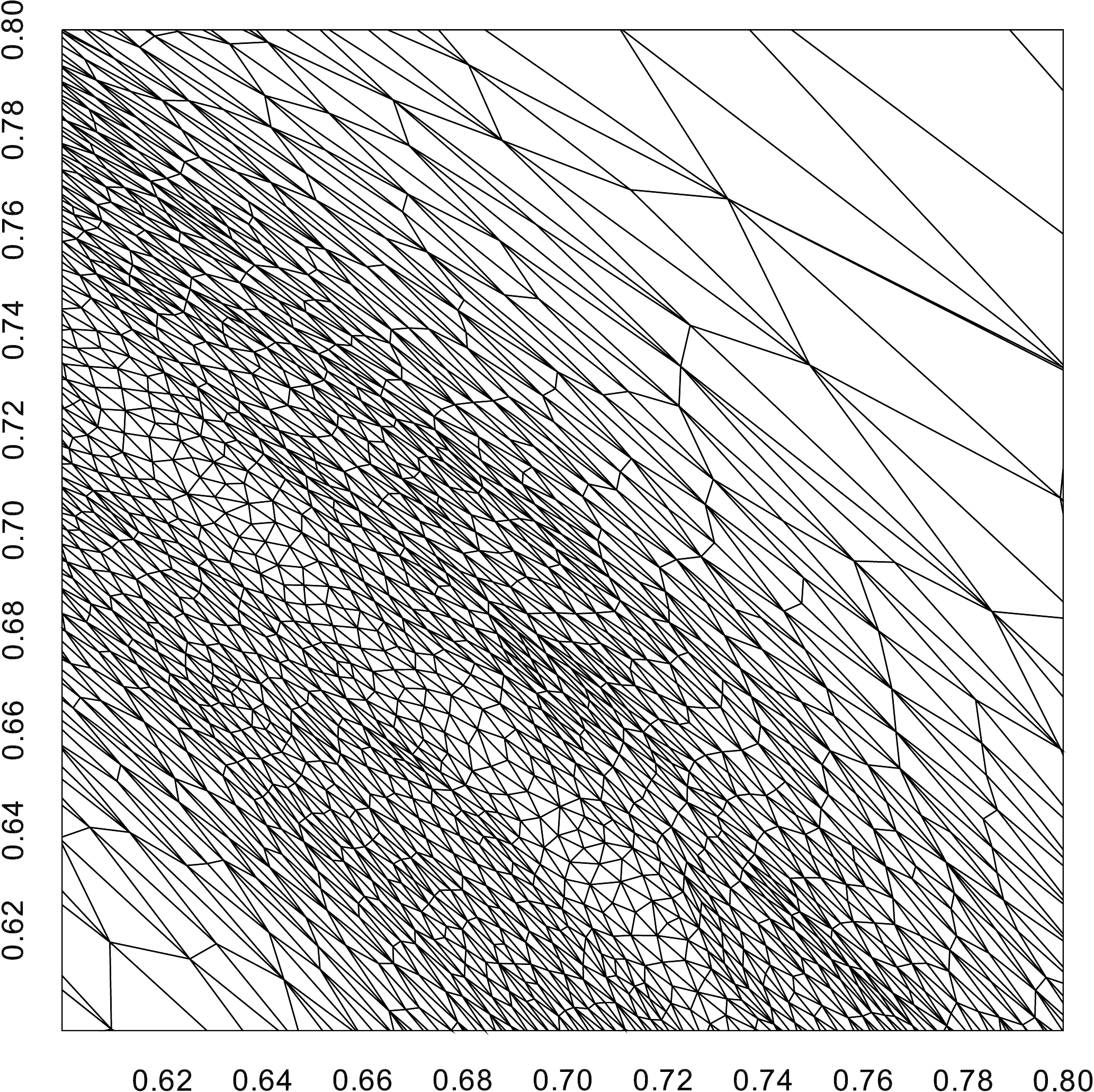}
  }
  \\
  \subfloat{
    \label{fig-dep5-cont-exit}
    \includegraphics[width=0.32\textwidth]{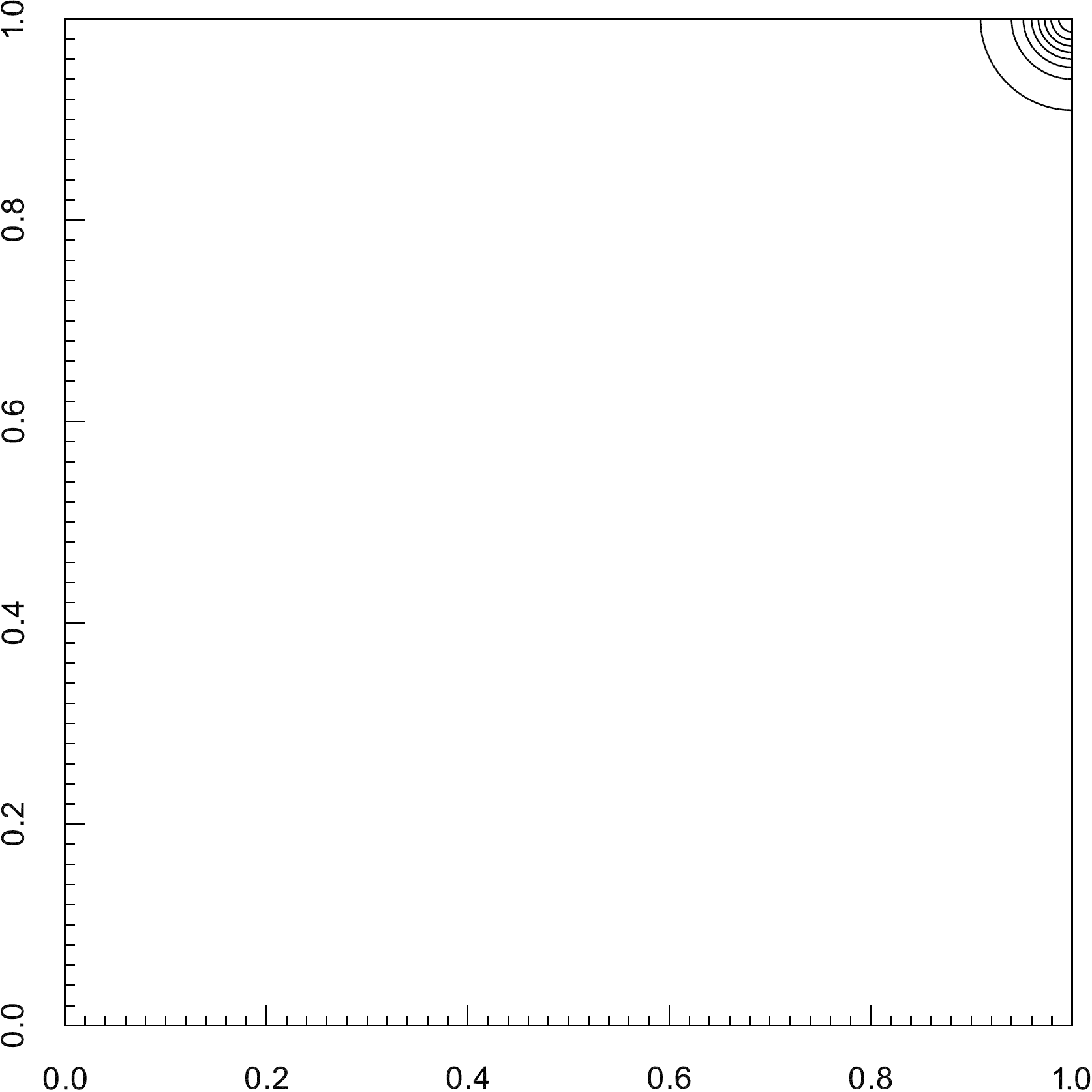}
  }
  \subfloat{
    \label{fig-dep5-mesh-exit}
    \includegraphics[width=0.32\textwidth]{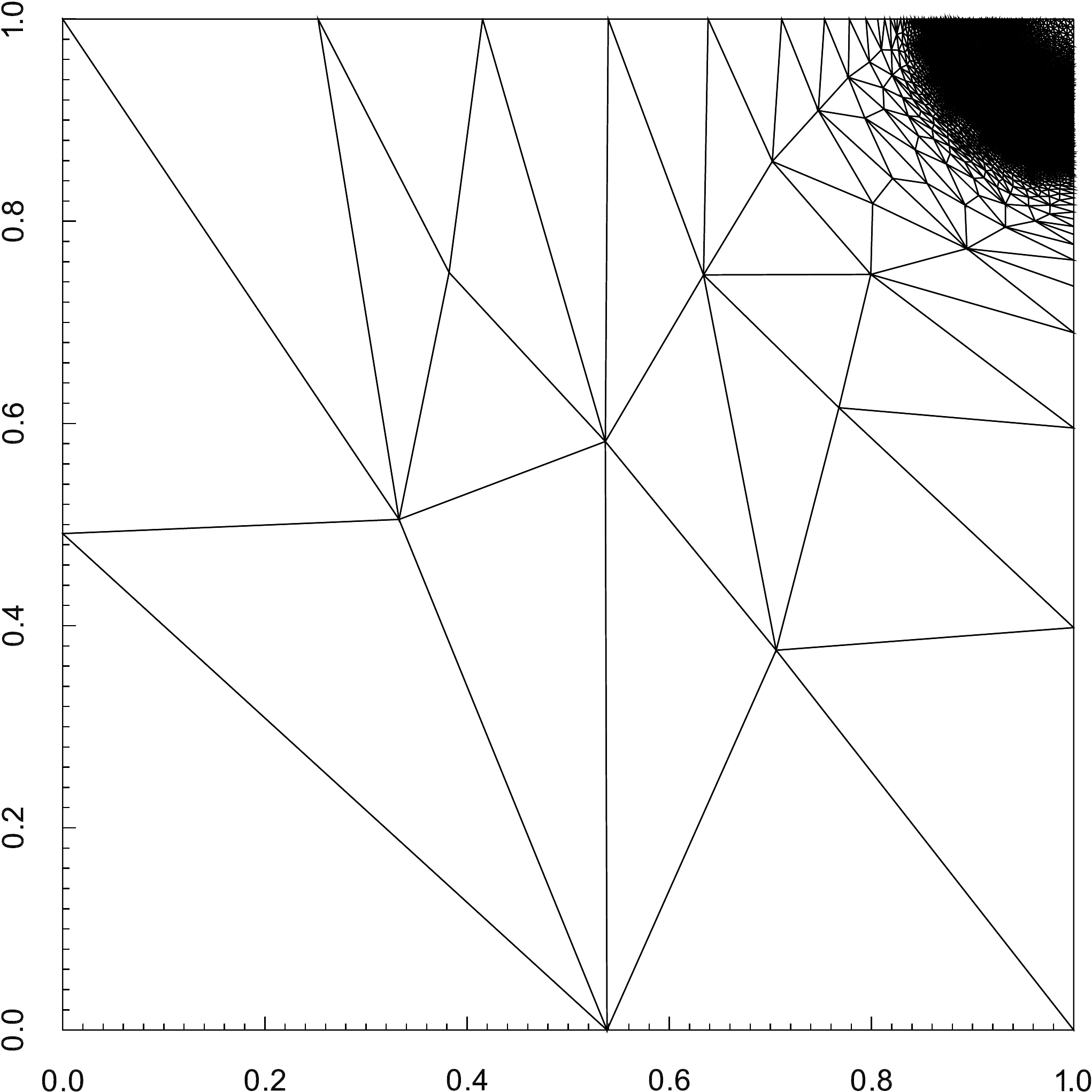}
  }
  \caption{Examples of contours and adapted meshes with
    $TOL_S=0.125,\,TOL_T=0.09375$ at $t=0.00141933$ (top), $t=0.0265008$
    (center) with zoom to the wave-front (center right), and at $t=0.0382107$
    (bottom).}
  \label{fig-de5-contour-mesh}
\end{figure}

\begin{figure}
  \centering
  \subfloat{
    \label{fig-dep5-err-adapted}
    \includegraphics[width=0.45\textwidth]{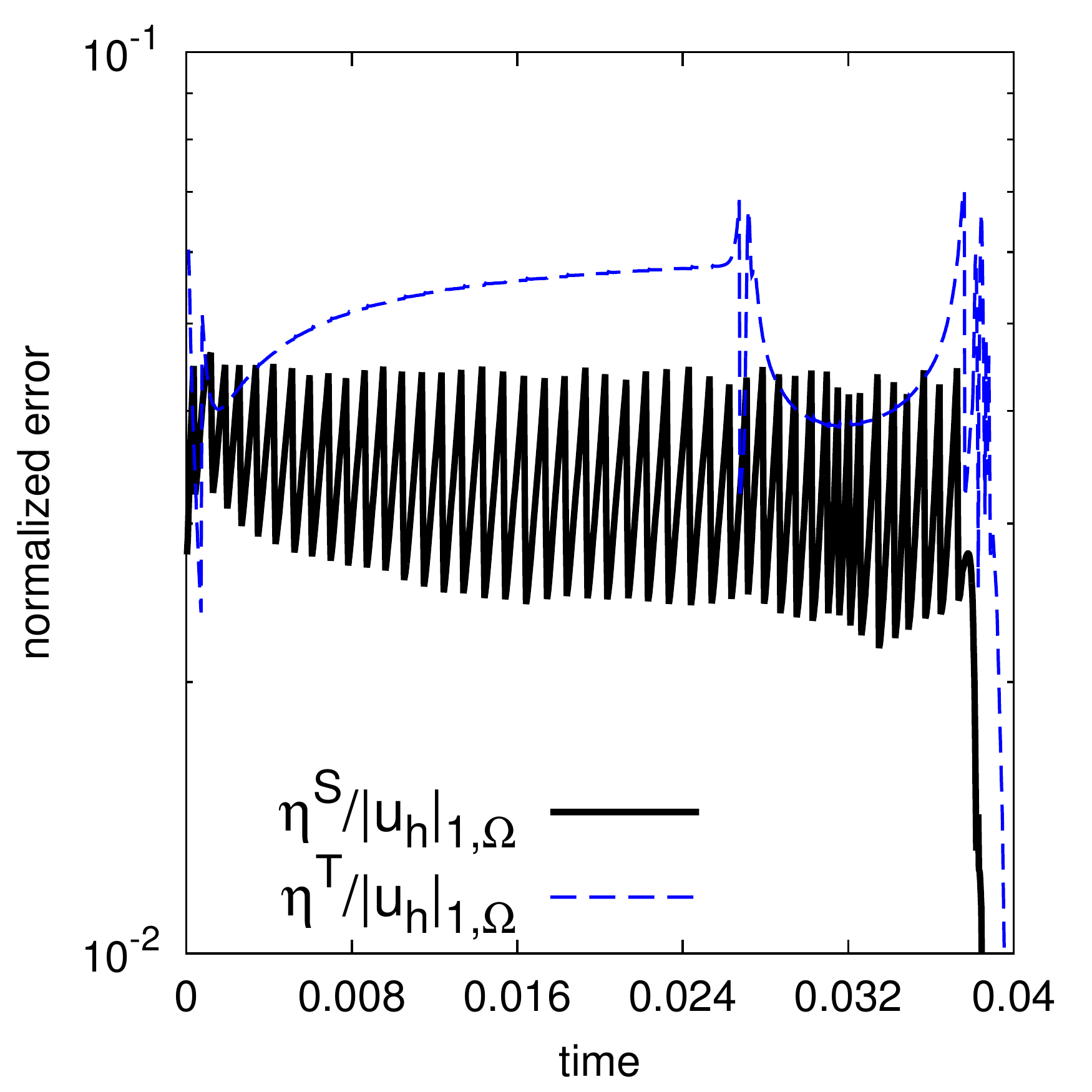}
  }
  \subfloat{
    \label{fig-dep5-tstep}
    \includegraphics[width=0.45\textwidth]{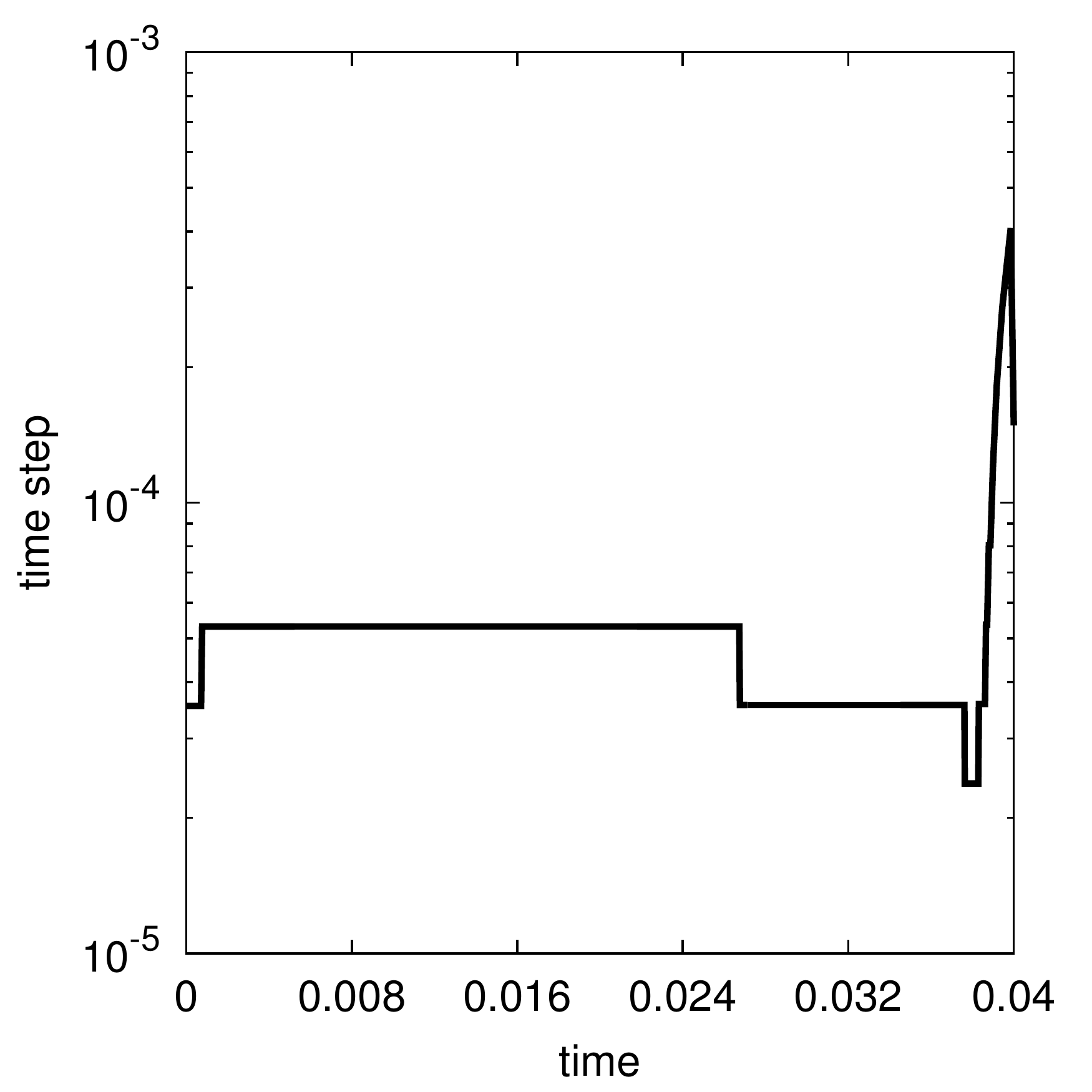}
  }
  \\
  \subfloat{
    \label{fig-dep5-elements}
    \includegraphics[width=0.45\textwidth]{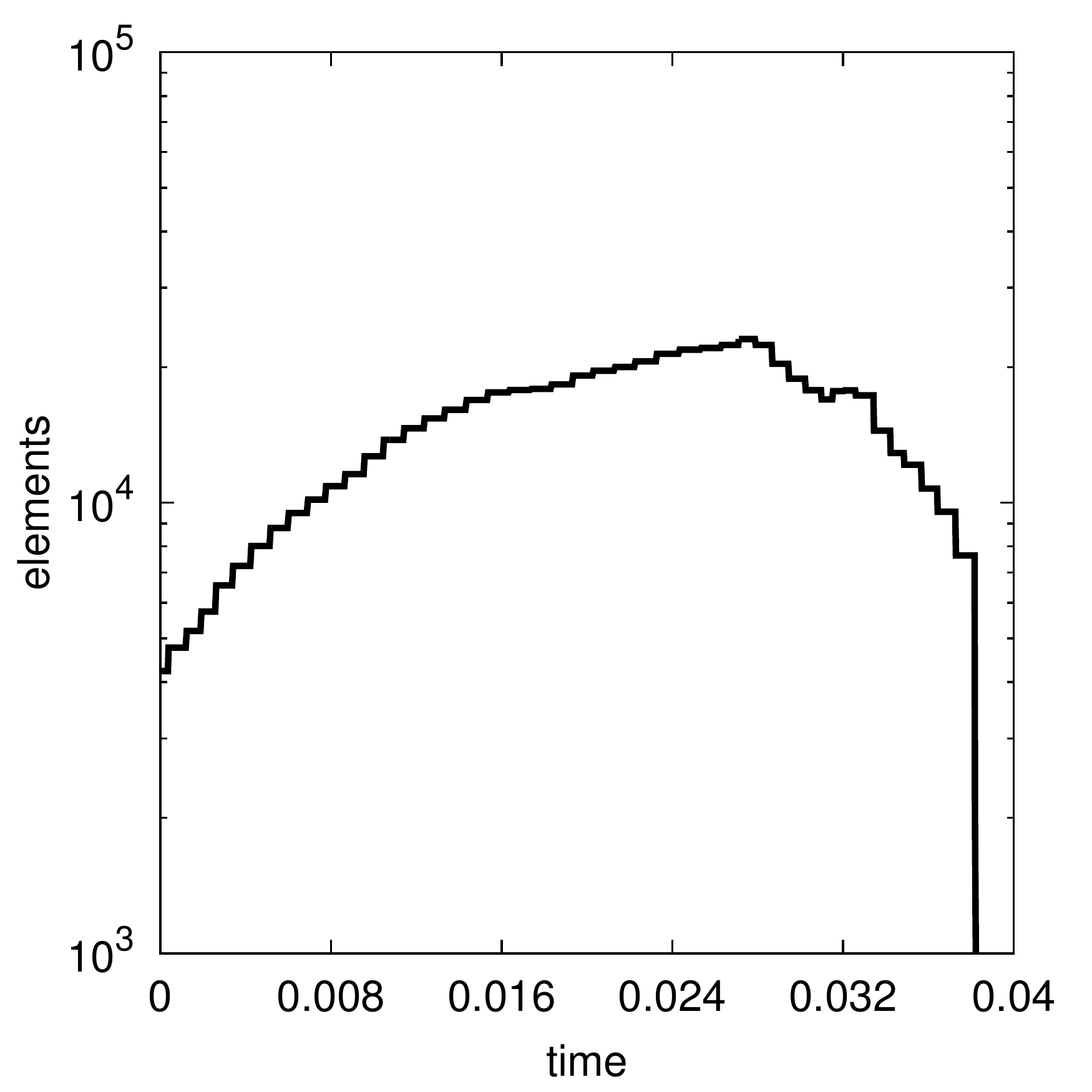}
  }
  \caption{Test case 1: space-time adaptive algorithm with
    $TOL_S=0.125,\,TOL_T=0.09375$: the relative space and time error estimators (top left), the
    evolution of the time step (top right), and number of elements (bottom).}
  \label{fig-dep5-adapt-step-err}
\end{figure}

\begin{table}[ht]
  \footnotesize
  \centering
  \begin{tabular}{cc*7{S[table-format=1.0(1)e2,round-precision = 2]}}
    $TOL_S$ &$TOL_T$ &$\eta^S$ &{$\tilde{\eta}^T$} &$\eta^{1,T}$ &$\eta^{2,T}$ &$\eta^{4,T}$\\
    \hline
    0.5     &0.375    &0.137597632233182   &0.236009005671297  &0.00252858662614283   &0.00883502157841513  &0.235830087809592 \\ 
    0.25    &0.1875   &0.0697810424661404  &0.120835828871556  &0.0014580128436934    &0.00271879468296343  &0.120796442845007 \\ 
    0.125   &0.09375  &0.0364521117192907  &0.0606894376243882 &0.000752260513321911  &0.00107121210547335  &0.0606753235685232 \\
    0.0625  &0.046875 &0.0192086236144145  &0.0304959648785695 &0.000366749702842699  &0.000443369787470237 &0.0304905363767379 \\
    0.03125 &0.0234375&0.00977412972805375 &0.0152282603025142 &0.000175911383826608  &0.00016483673467284  &0.0152263501005458 \\
  \end{tabular}
  \caption{Estimated error after space-time adaptation for Test case 1 with
    $TOL_T=0.75\cdot TOL_S.$}
  \label{tab-est-adapt-dep5}
\end{table}

\begin{table}[ht]
  \footnotesize
  \centering
  \begin{tabular}{*{8}{c}}
    $TOL_S$ &remeshings &step adapts &step restarts &total steps &CPU time est (\%)\\
    \hline
    0.5     &21 &11 &11 &305  &0.22\\
    0.25    &38 &11 &11 &432  &0.18 \\
    0.125   &48 &9  &9  &611  &0.19\\
    0.0625  &62 &10 &10 &862  &0.24\\
    0.03125 &73 &11 &11 &1219 &0.28\\
  \end{tabular}
  \caption{Mesh and time-step statistics for space-time adaptation for Test case
    1 with $TOL_T=0.75\cdot TOL_S$.}
  \label{tab-stats-dep5}
\end{table}


\subsection{Test case 2}
\label{subsec:block4}

\begin{sloppypar}

The goal of the following test case is to determine the gain in efficiency using
space-time adaptation. In particular, we are interested in situations where the
time error varies more rapidly and in magnitude than in the previous test case.
We choose the same domain, initial data and reaction term as in (\ref{eqn:dep5})
and take $T=0.01$. We replace the diffusion term with $-\text{div}(A\nabla u)$
where
\begin{align*}
  A(x,y)
  &=1 + 19\left(1-\tanh^2\left(20\left(\left(x^2+y^2\right)^{1/2}-0.3\right)\right)\right).
\end{align*}
The wave slows down and spreads outwards as it enters the region with higher
diffusion where $\left(x^2+y^2\right)^{1/2}\approx 0.3$, and then picks up speed
as the wave exits this region. The theory in this case follows as in Section
\ref{sec:apost} with appropriate changes to the definition of the edge and
element residuals. To simplify the computation of the estimator, on each element
$K$ we replace $A$ by its value $A_K$ at the barycenter of $K$. Then we define
the new edge residual $r_K^A(u_h)=A_K[\nabla u_h].$ This definition is somewhat
different from what appears in the literature, for instance in \cite{pic03a} and
\cite{micper10}, but was easier to implement.
\end{sloppypar}

As for test case 1, we compute a reference solution using mesh adaptation with
$TOL_S=0.015625$, with meshes ranging from $55000$ to $250000$ elements, and a
relatively small constant time step $\tau_{ref}=1.25\times 10^{-5}$. A solution
is then computed on a uniform mesh with $h=0.025$, with $6400$ elements, and
time step $\tau=2\times 10^{-4}$ and the estimated error and exact error are
approximated from the reference solution, plotted in Figure
\ref{fig-block4-const-err} (left). We see that the estimated error $\eta^S$ is
generally close to the exact error. The time estimator $\tilde{\eta}^T$
decreases as the wave enters the area of higher diffusion and slows down. The
estimator then increases as the wave exits this region and speeds up
again. Additionally, we compute the estimated and exact error when applying mesh
adaptation with $TOL_S=0.25$, with meshes of about $1000$ elements, and constant
time step $2\times 10^{-4}$, plotted in Figure \ref{fig-block4-const-err}
(right). The observed trend for $\tilde{\eta}^T$ is similar to that for uniform
meshes. The estimated error $\eta^S$ is again close to the exact error, both
dropping whenever the mesh is adapted. As noted for test case 1, the space error
grows more quickly for adapted meshes compared to uniform meshes since the wave
moves beyond the refined region of the mesh. Finally, Figure
\ref{fig-block4-adapt-err} illustrates the evolution of the error and time step
applying the space-time adaptation algorithm with $TOL_S=0.25,\,TOL_T=0.375,$
also resulting in meshes of about $1000$ elements.

\begin{figure}
  \centering
  \subfloat{
    \label{fig:block4-err-uniform}
    \includegraphics[width=0.45\textwidth]{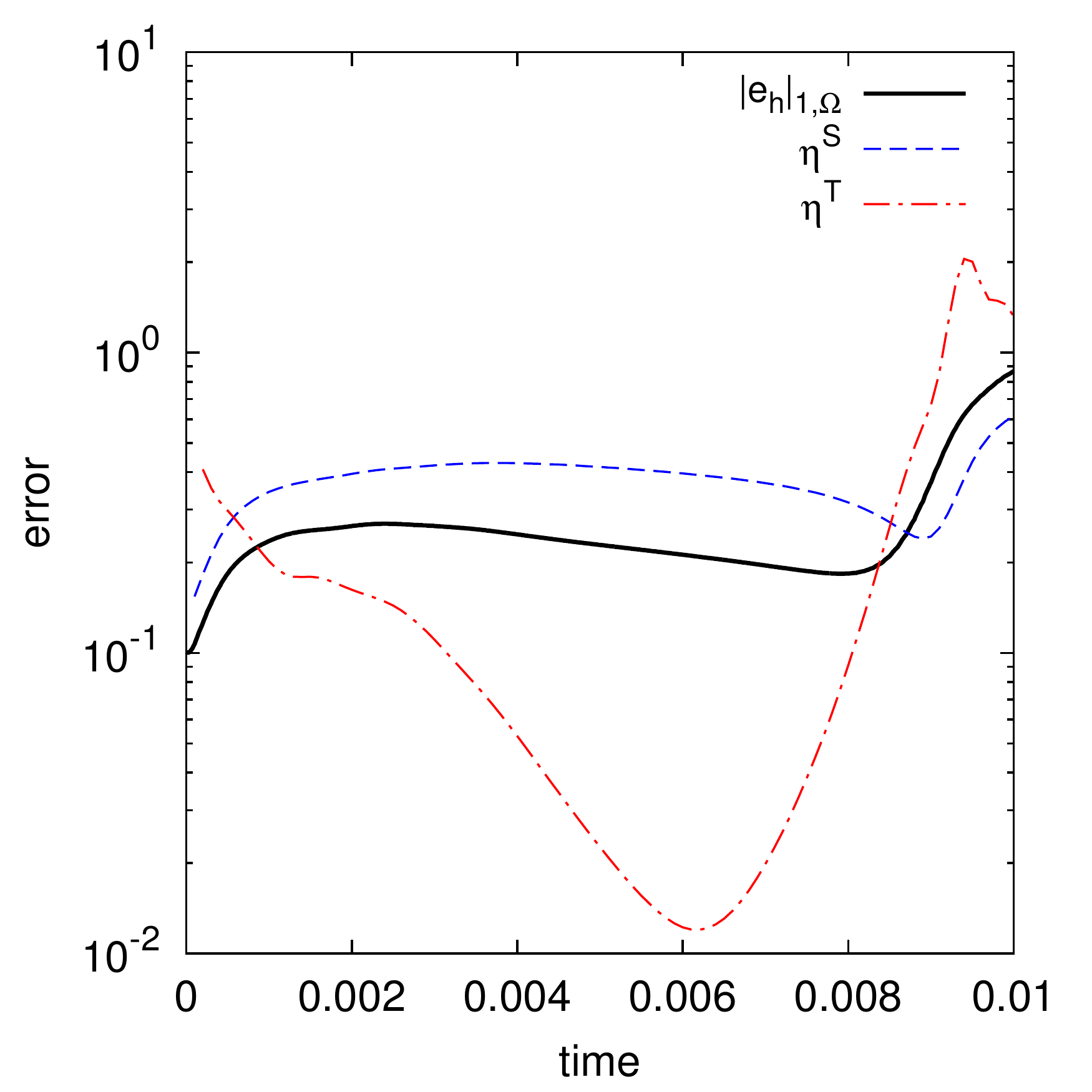}
  }
  \subfloat{
    \label{fig-block4-err-adspace}
    \includegraphics[width=0.45\textwidth]{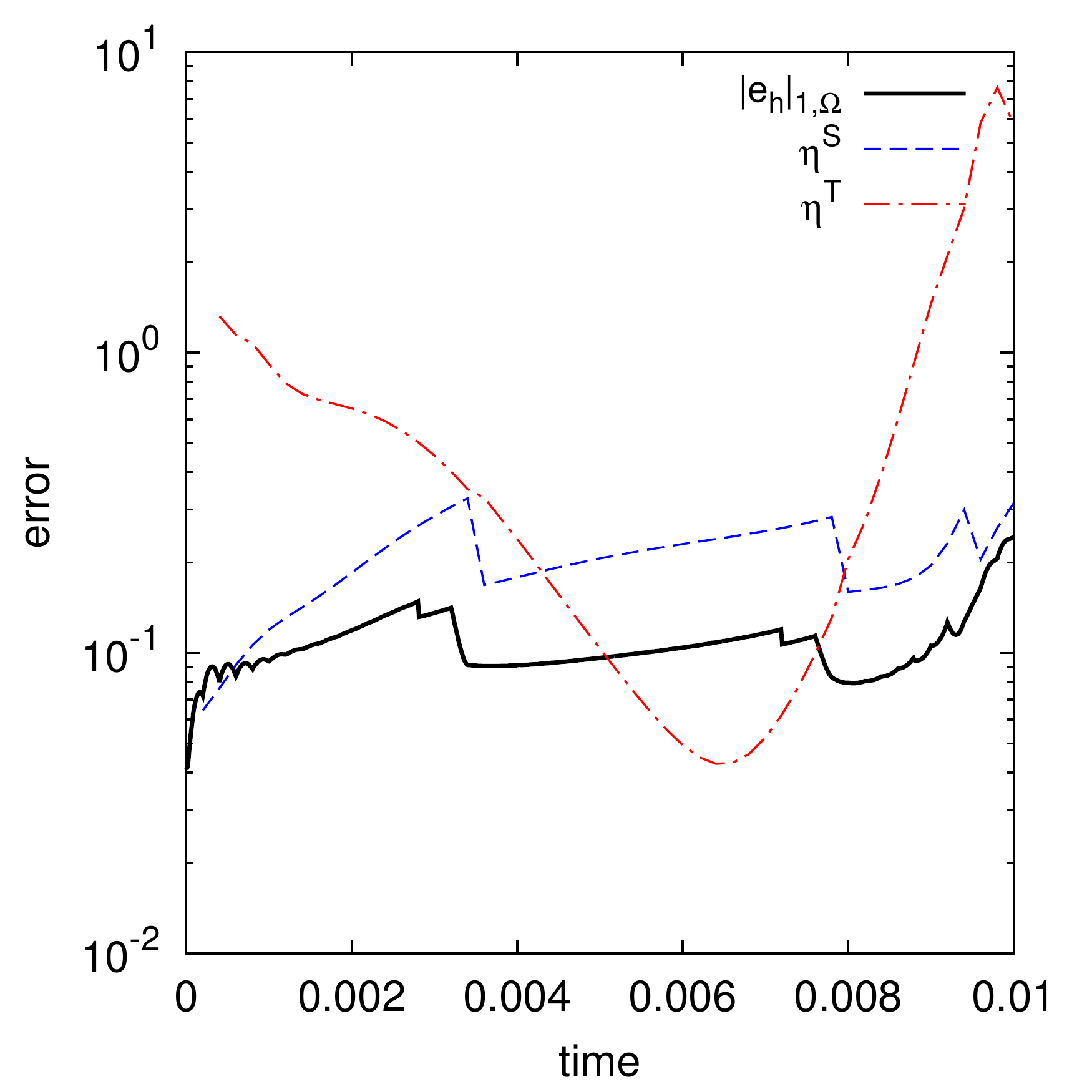}
  }
  \caption{Plot of error for Test case 2. Plot of error in time on a uniform
    mesh with $h=0.025, \tau=2\times 10^{-4}$ (left) and using space-only adaptation with
    $TOL_S=0.25, \tau=2\times 10^{-4}$ (right).}
  \label{fig-block4-const-err}
\end{figure}

\begin{figure}
  \centering
  \subfloat{
    \label{fig-block4-err-adspacetime}
    \includegraphics[width=0.45\textwidth]{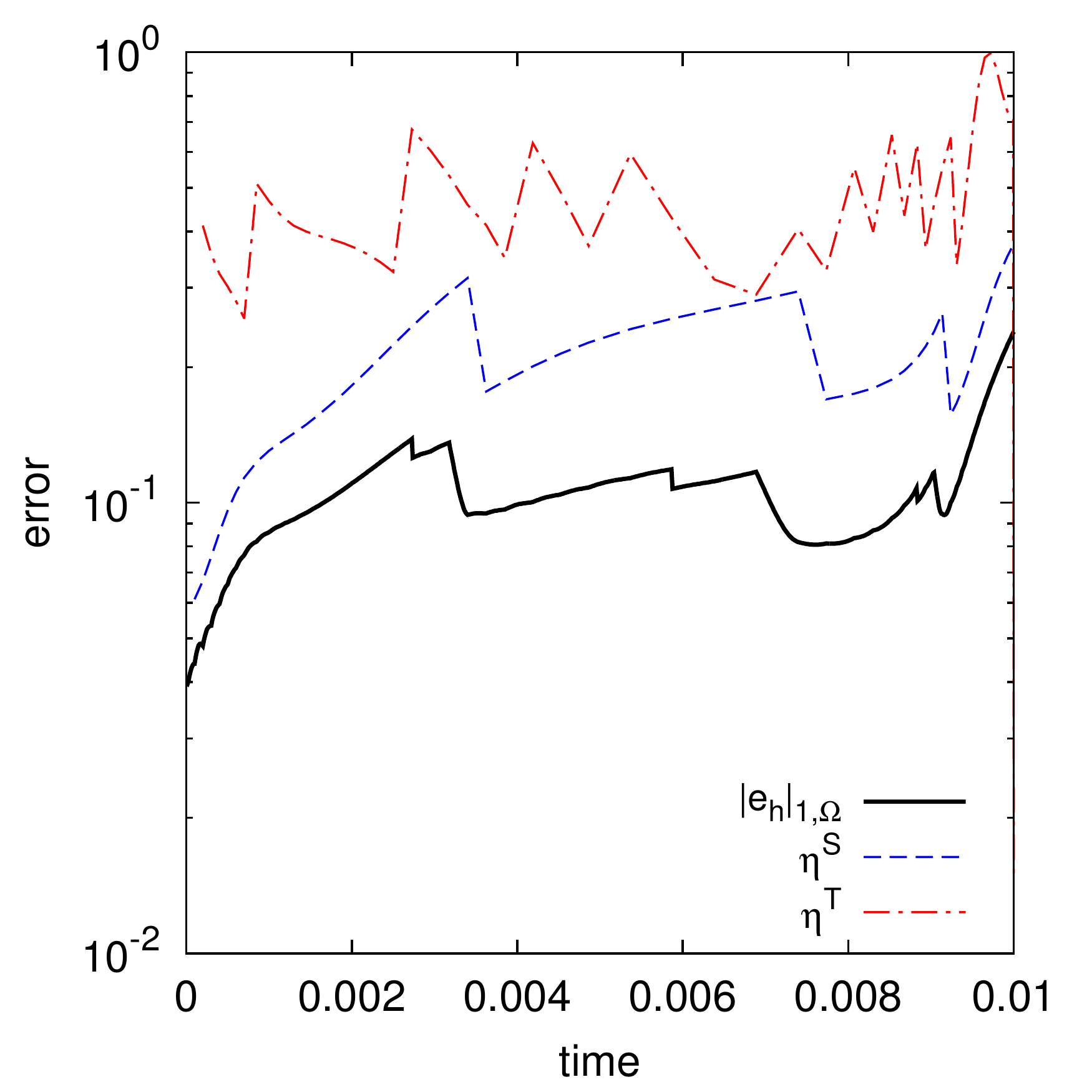}
  }
  \subfloat{
    \label{fig-block4-tstep}
    \includegraphics[width=0.45\textwidth]{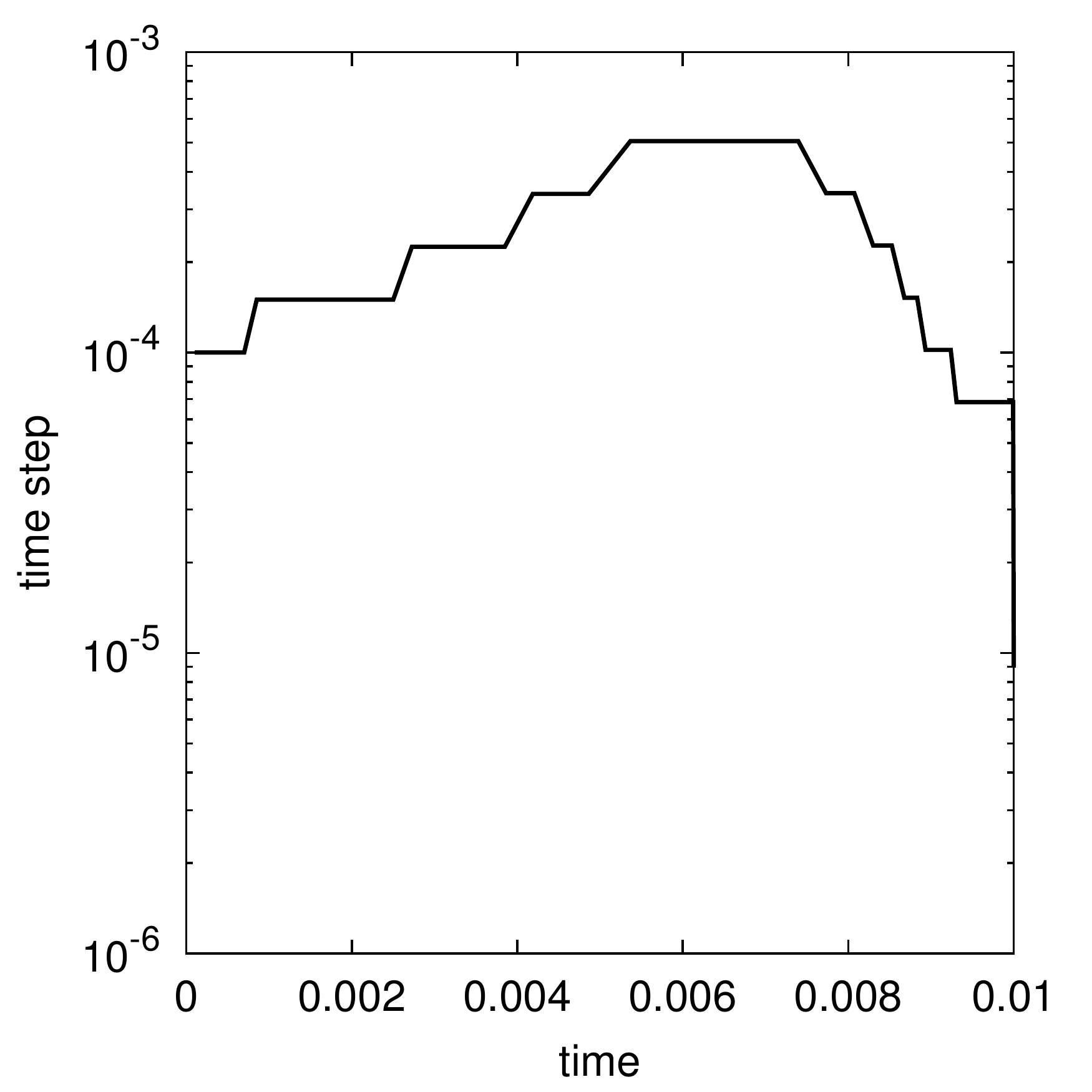}
  }
  \caption{Test case 2: plot of the error (left) and time step (right) using
    space-time adaptation with $TOL_S=0.25$ and $TOL_T=0.375$.}
  \label{fig-block4-adapt-err}
\end{figure}

The efficiency of the adaptive method is assessed by determining the level of
error that can be obtained in a given total CPU time. We compare computations
with uniform meshes with constant time step, adapted meshes with constant time
step, and applying the full space-time adaptation algorithm. For a given spatial
resolution (i.e. a fixed uniform mesh, or fixed error tolerance $TOL_S$), we
compute the solution with enough time steps in order to assess the general CPU
time vs. error trend, which is recorded in Figure
\ref{fig:block4-errvscpu_v2}. For instance, the curve for the uniform meshes
with $h=0.0125$ represents computations using six different time steps, ranging
from $4\times 10^{-4}$ on the left to $1.25\times 10^{-5}$ on the right. As the
time step is decreased, the CPU time increases as expected, while the error
level decreases and eventually stabilizes when the time step reaches about
$1\times 10^{-4}$. We infer that for $h=0.0125$ a time step between
$1\times 10^{-4}$ and $2\times 10^{-4}$ provides a good compromise between
achieving the lowest error and minimizing the CPU time. We have applied this
criteria to record some best-case results for each level of $h$ in Table
\ref{tab:block-errcpu}. The curves in Figure \ref{fig:block4-errvscpu_v2} for
the adapted meshes with constant time step are obtained in the same way for each
tolerance $TOL_S$. The curves for the space-time adapted solutions for a given
$TOL_S$ are obtained by varying the tolerance $TOL_T$. For instance, the curve
for $TOL_S=0.125$ corresponds to six computations with $TOL_T$ ranging from
$0.375$ on the left to $0.01171875$ on the right. We have similarly recorded
some best-case results for the adapted methods in Table \ref{tab:block-errcpu}.

We observe from both Figure \ref{fig:block4-errvscpu_v2} and Table
\ref{tab:block-errcpu} that applying either adaptive method is generally more
efficient than using uniform meshes. For instance, from the table, if we compare
the adaptive results with $TOL_S=0.25$ to the computation with a uniform mesh
with $h=0.0125$, $\tau=1\times 10^{-4}$, we see that the using a uniform mesh
takes about $8$ times the total CPU time to achieve a comparable level of error
to the adaptive methods. It should be noted that when solving with the adaptive
methods, between $30-50$\% of the total CPU time is spent in the adaptive phase
of the solution-adaptation loop. This includes computing the error estimator,
constructing the metric and adapting the mesh. It is likely that the CPU time
could further be reduced by optimizing the adaptive algorithm.

There does not seem to be a significant difference in efficiency when employing
mesh adaptation with either an adapted or a constant time step. However, it
should be noted that one of the advantages of using the space-time adaptive
method is that there is no need to guess the appropriate time step to be
used. For the test cases considered in this paper, we found that choosing
$TOL_T$ close to $TOL_S$ consistently gives a good result.

\begin{figure}[ht]
  \centering
  \subfloat{
    \includegraphics[width=0.6\textwidth]{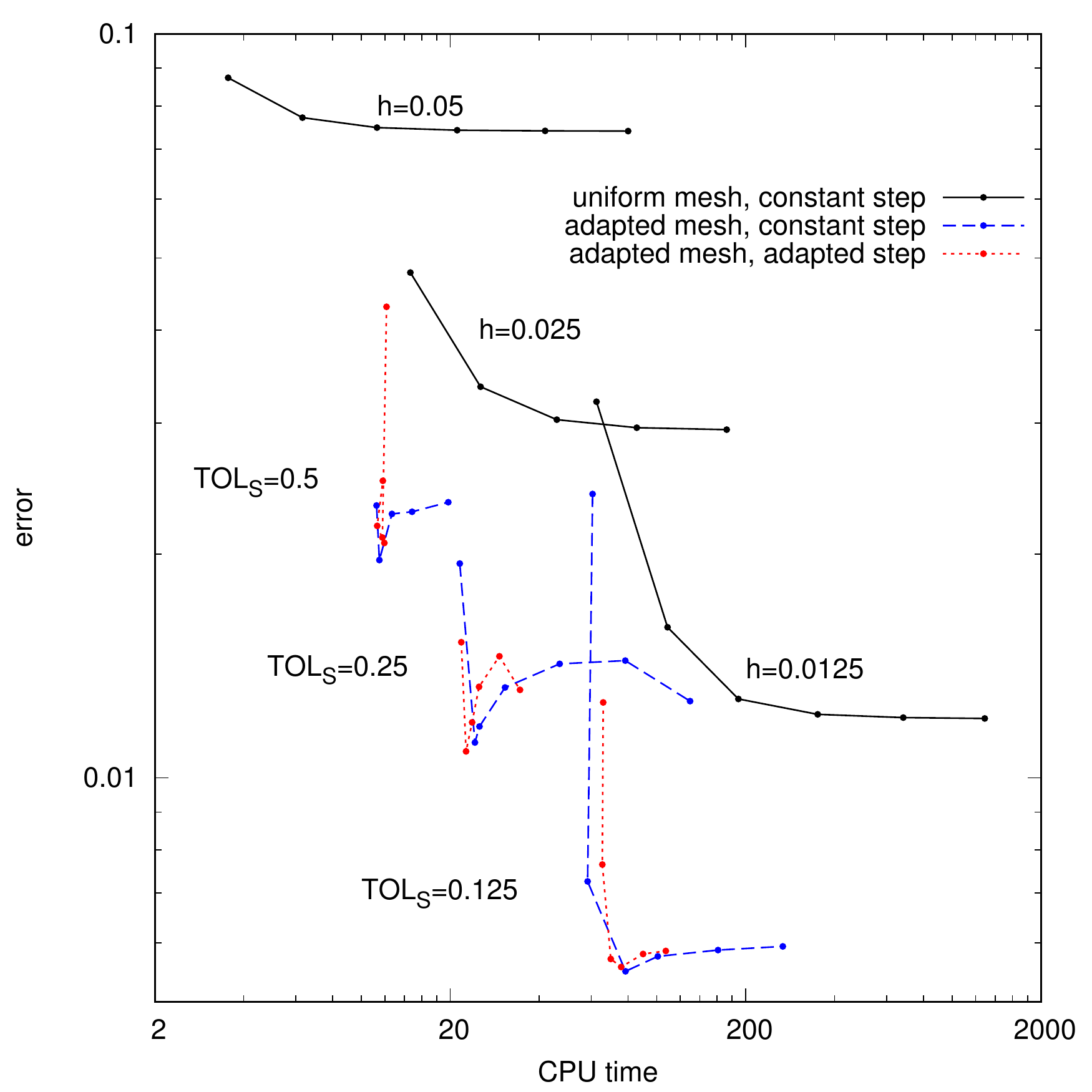}
  }
  \caption{Test case 2: CPU time vs. error.}
  \label{fig:block4-errvscpu_v2}
\end{figure}

\begin{table}
  \footnotesize
  \centering
  
  \caption*{Uniform meshes and constant time step.}
  \vspace{-7pt}
  \begin{tabular}{c
    S[table-format=1.0(1)e2,round-precision = 2]
    S[scientific-notation=false,round-precision=4]
    S[scientific-notation=false,round-precision=2]
    S[scientific-notation=false,round-precision=1]}
    $h$    &{$\tau$} &{$|e_h|_{1,\Omega}$} &{$ei^{S}$} &{CPU(sec)} \\
    \hline
    0.05   &4.00e-4  &0.0873513969141388   &0.847212   &3.541      \\
    0.025  &2.00e-4  &0.0335717357015931   &1.13001    &25.3278    \\
    0.0125 &2.00e-4  &0.0159385569219136   &1.17915    &108.773    \\
    0.0125 &1.00e-4  &0.0127684461004017   &1.47236    &189.002    \\
  \end{tabular}

  \vspace{7pt}

  \caption*{Adapted meshes and constant time step.}
  \vspace{-7pt}
  \begin{tabular}{c
    S[table-format=1.0(1)e2,round-precision = 2]
    S[scientific-notation=false,round-precision=4]
    S[scientific-notation=false,round-precision=2]
    S[scientific-notation=false,round-precision=1]}
    $TOL_S$ &{$\tau$} &{$|e_h|_{1,\Omega}$}  &{$ei^{S}$} &{CPU(sec)} \\
    \hline
    0.5     &0.000200 &0.019627991815999     &1.96293    &11.5074    \\
    0.25    &0.000200 &0.0111595204800256    &1.90974    &24.2181    \\
    0.125   &0.000100 &0.00549700541232919   &1.98303    &78.3333    \\
  \end{tabular}

  \vspace{7pt}

  \caption*{Adapted meshes and adapted time step.}
  \vspace{-7pt}
  \begin{tabular}{cc
    S[scientific-notation=false,round-precision=4]
    S[scientific-notation=false,round-precision=2]
    S[scientific-notation=false,round-precision=1]}
    $TOL_S$ &{$TOL_T$}  &{$|e_h|_{1,\Omega}$} &{$ei^{S}$} &{CPU(sec)} \\
    \hline
    0.5     &0.375      &0.0218271382653028   &1.89914    &11.3081  \\
    0.25    &0.375      &0.0108539190820204   &2.05535    &22.6158  \\
    0.125   &0.046875   &0.00556897584502264  &2.00226    &75.8097  \\
  \end{tabular}

  \vspace{3pt}

  \caption{Test case 2: error, effectivity index and total CPU time.}
  \label{tab:block-errcpu}

\end{table}


\subsection{Test case 3}
\label{subsec:FHN1}

Take the domain $\Omega=(0,100)\times(0,100),\,T=350$. Solve the FitzHugh-Nagumo
model with $\epsilon=0.01$, $\kappa=0.16875$, $a=0.25$, and initial condition
\begin{align*}
  u_0
  =0.5-\frac{1}{\pi}\arctan\left(\sqrt{x^2+y^2}-200\right),
  \quad\quad
  w_0=0.
\end{align*}
At time $t=0$ the wave is activated in the lower left corner of the domain, and
a circular wave action potential results moving away from the origin.


\subsubsection{Performance of mesh adaptation}
\label{subsubsec:mono-adalg}

In Figure \ref{fig:FHN1-sol-mesh} we show an example solution and mesh using the
adaptive method. The mesh elements are heavily concentrated in the
depolarization and repolarization regions of the transmembrane potential, while
there are few elements in the region ahead of the wave-front, where both
variables are nearly constant. Additionally, the regions corresponding to the
plateau and recovery are reasonably well refined, capturing the slow variation
of the recovery variable. Figure \ref{fig:FHN1-zoom-cont} shows a zoom on the
wave front, illustrating that the mesh fits the variation of both variables.

\begin{figure}
  \centering
  \subfloat{
    \includegraphics[width=0.42\textwidth]{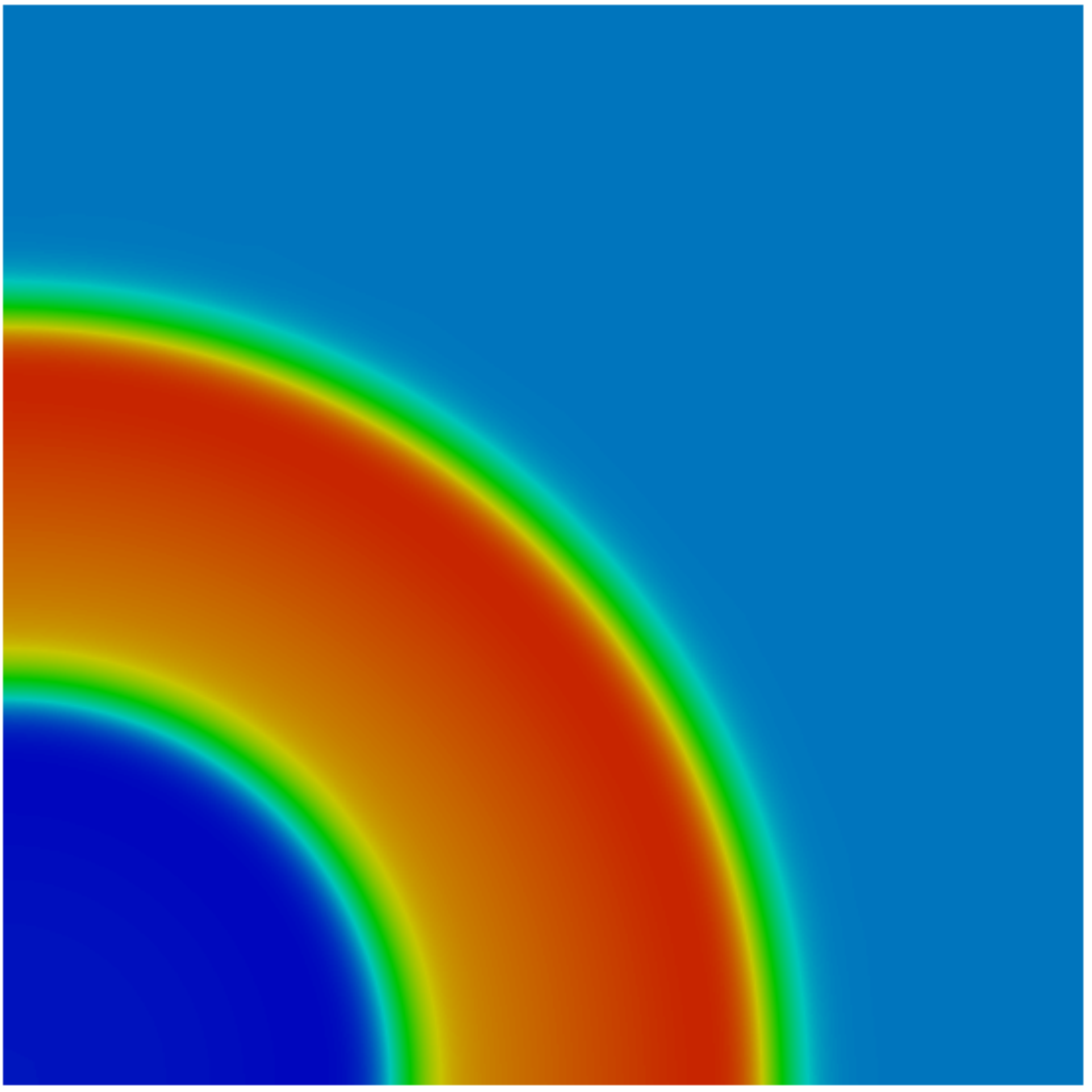}
  }
  \subfloat{
    \includegraphics[width=0.1\textwidth]{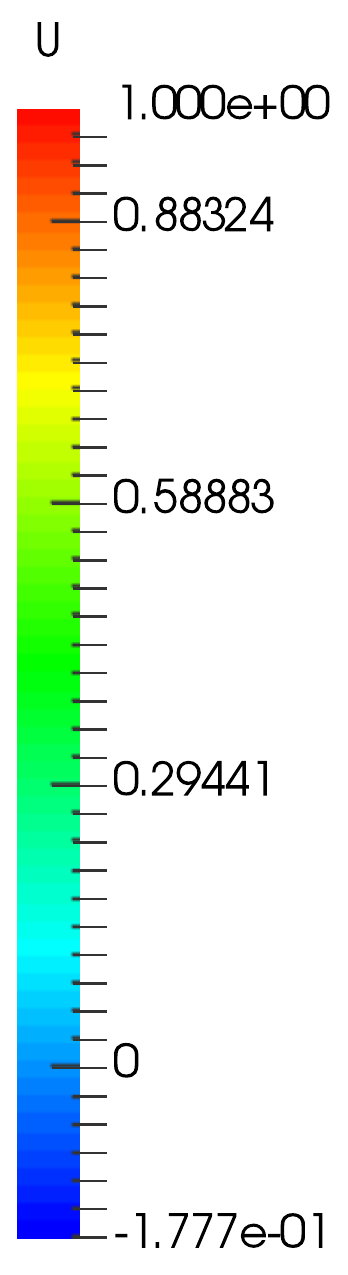}
  }
  \subfloat{
    \includegraphics[width=0.42\textwidth]{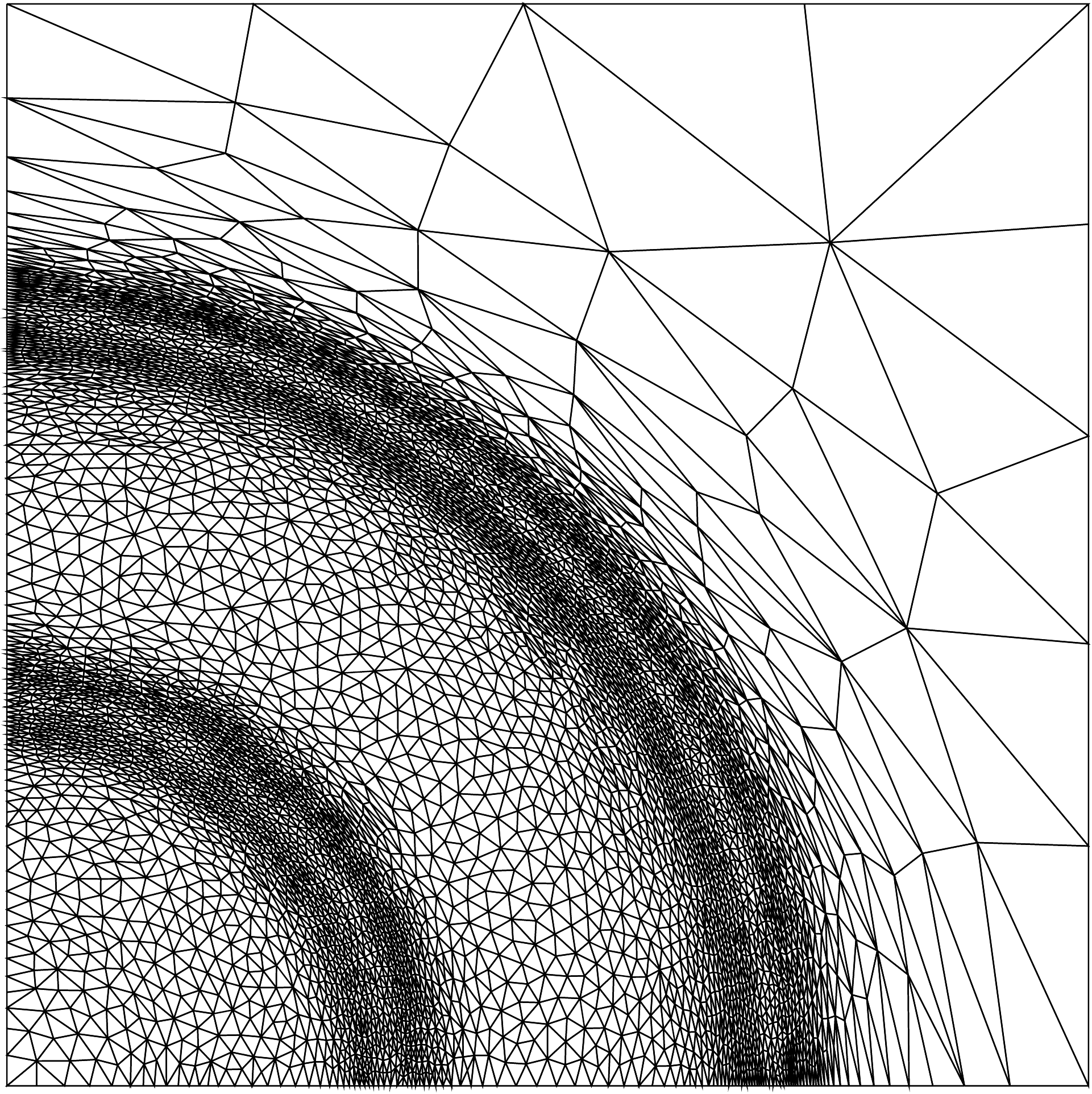}
  }
  \caption{Test case 3: transmembrane potential $u$ (left) and corresponding
    adapted mesh (right) when applying the adaptive method with
    $TOL_S^U=0.125,\,TOL_S^W=0.0125$ and $\tau=0.5$ at $t=175$.}
  \label{fig:FHN1-sol-mesh}
\end{figure}

\begin{figure}
  \centering
  \subfloat{
    \includegraphics[width=0.32\textwidth]{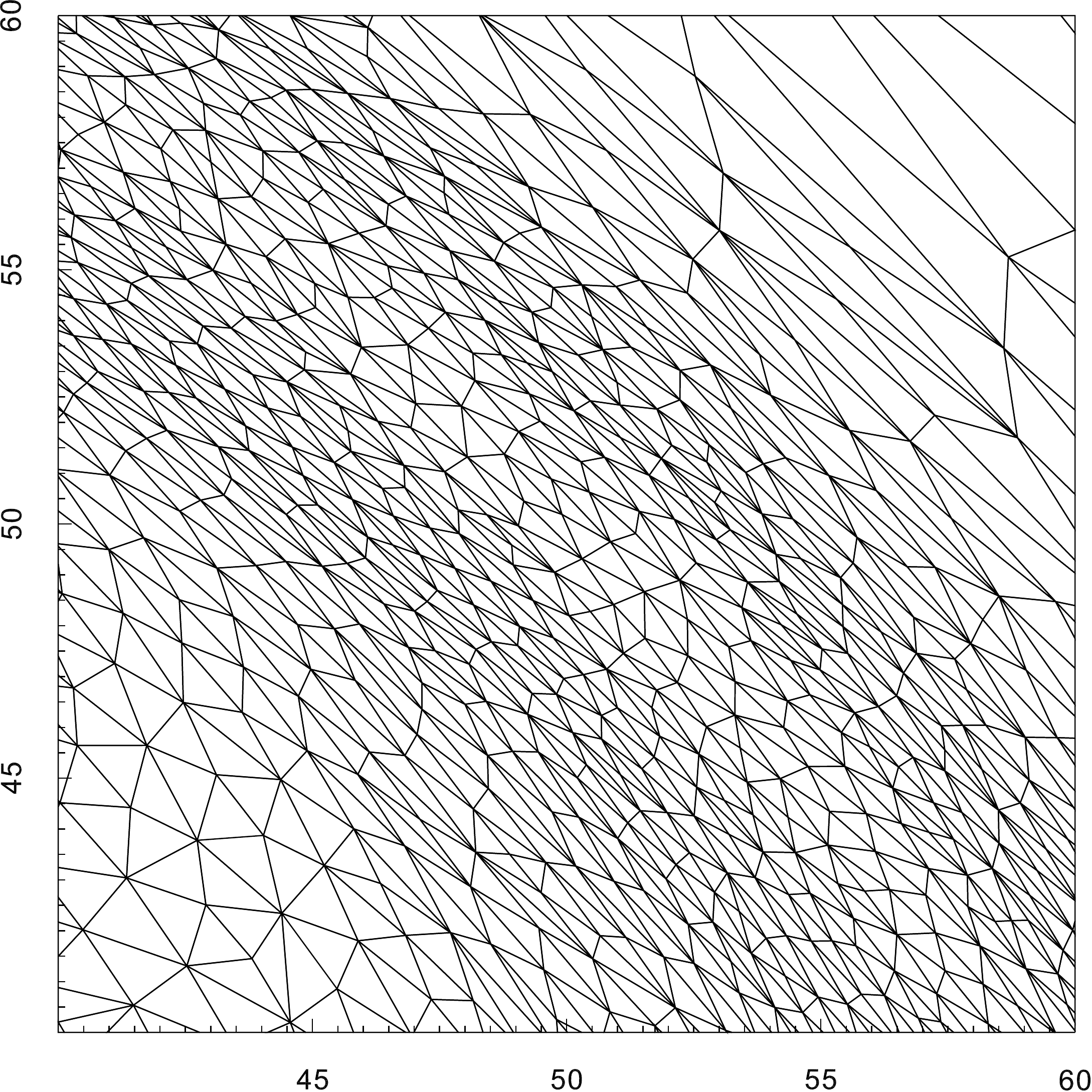}
  }
  \subfloat{
    \includegraphics[width=0.32\textwidth]{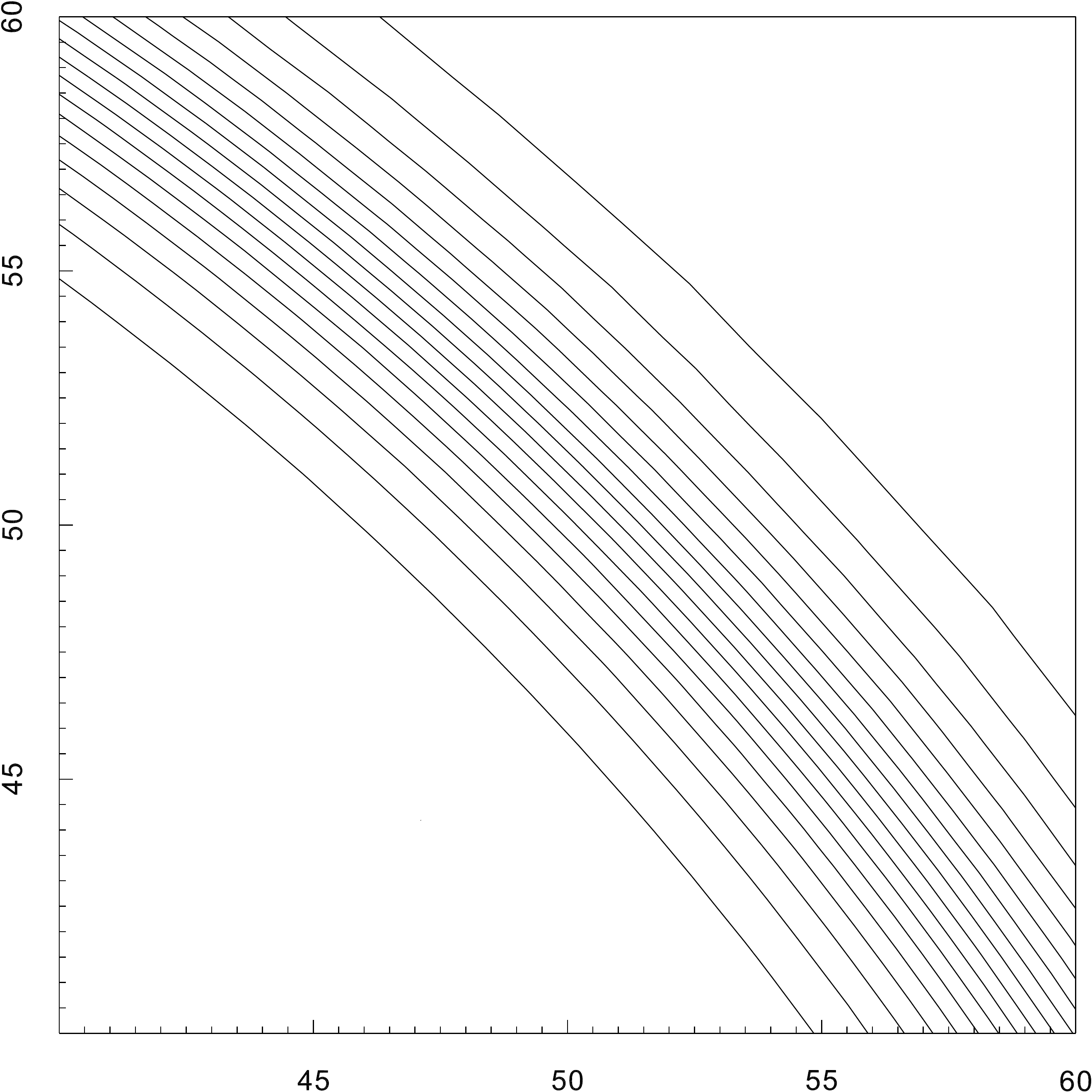}
  }
  \subfloat{
    \includegraphics[width=0.32\textwidth]{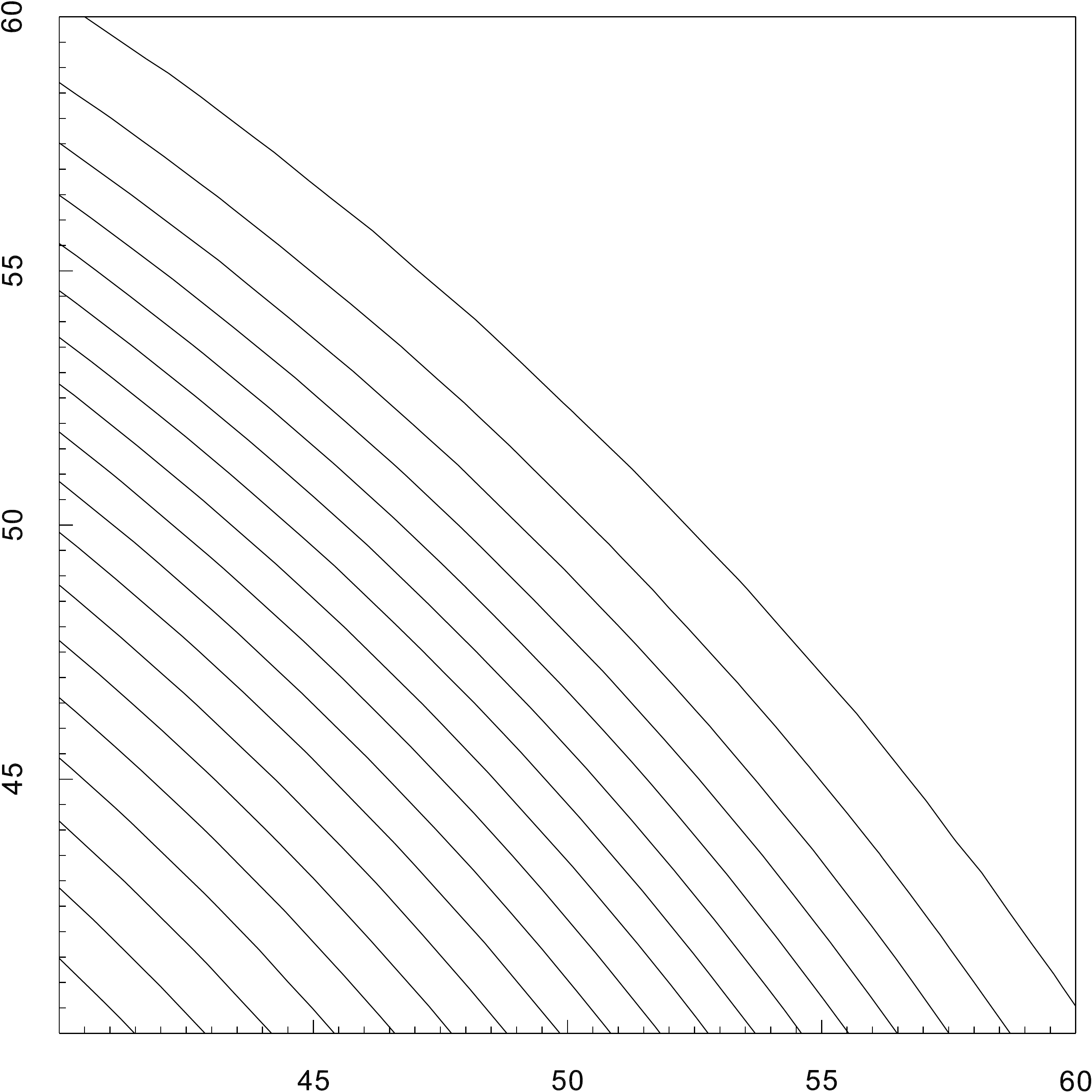}
  }
  \caption{Test case 3: zoom of the mesh at wave-front at time $t=175$ (left),
    and contours of $u$ (centre) and $w$ (right).}
  \label{fig:FHN1-zoom-cont}
\end{figure}


\subsubsection{Computation of the effectivity index}
\label{subsubsec:mono-effect-ind}

We estimate the following effectivity indices
\begin{align*}
  ei^{S,U}=\frac{\eta^{S.U}}{\vertiii{e_u}},
  \quad &ei^{S,W}=\frac{\omega^{S.W}}{\|e_w\|_{L^2(0,T;H)}}.
\end{align*}
Since we do not have an exact solution, to assess the robustness of the
estimators we compute a reference solution. The reference solution for the
scalar problem was computed using a fine adapted meshes. Here, it was decided
that a more efficient method to compute a reference solution was using higher
order $P_2$ elements on a fine uniform mesh. In what follows the reference
solution is computed on mesh with $h=0.625$, with $205441$ degrees of freedom,
and a constant time step $\tau=0.0625$. Figure \ref{fig:pointplot} provides some
heuristic evidence that the solutions are converging to the reference solution.

Some computations of the effectivity indices are shown in Table
\ref{tab:FHN1-err-eff-CPU} illustrating the reliability of the estimators for
this test case. Note that as we take finer uniform meshes, the effectivity
index $ei^{S,U}$ approaches the value reached for the adapted meshes, while
$ei^{S,W}$ also increases and becomes closer to $1$.

\begin{figure}[ht]
  \centering
  \subfloat{
    \includegraphics[width=0.45\textwidth]{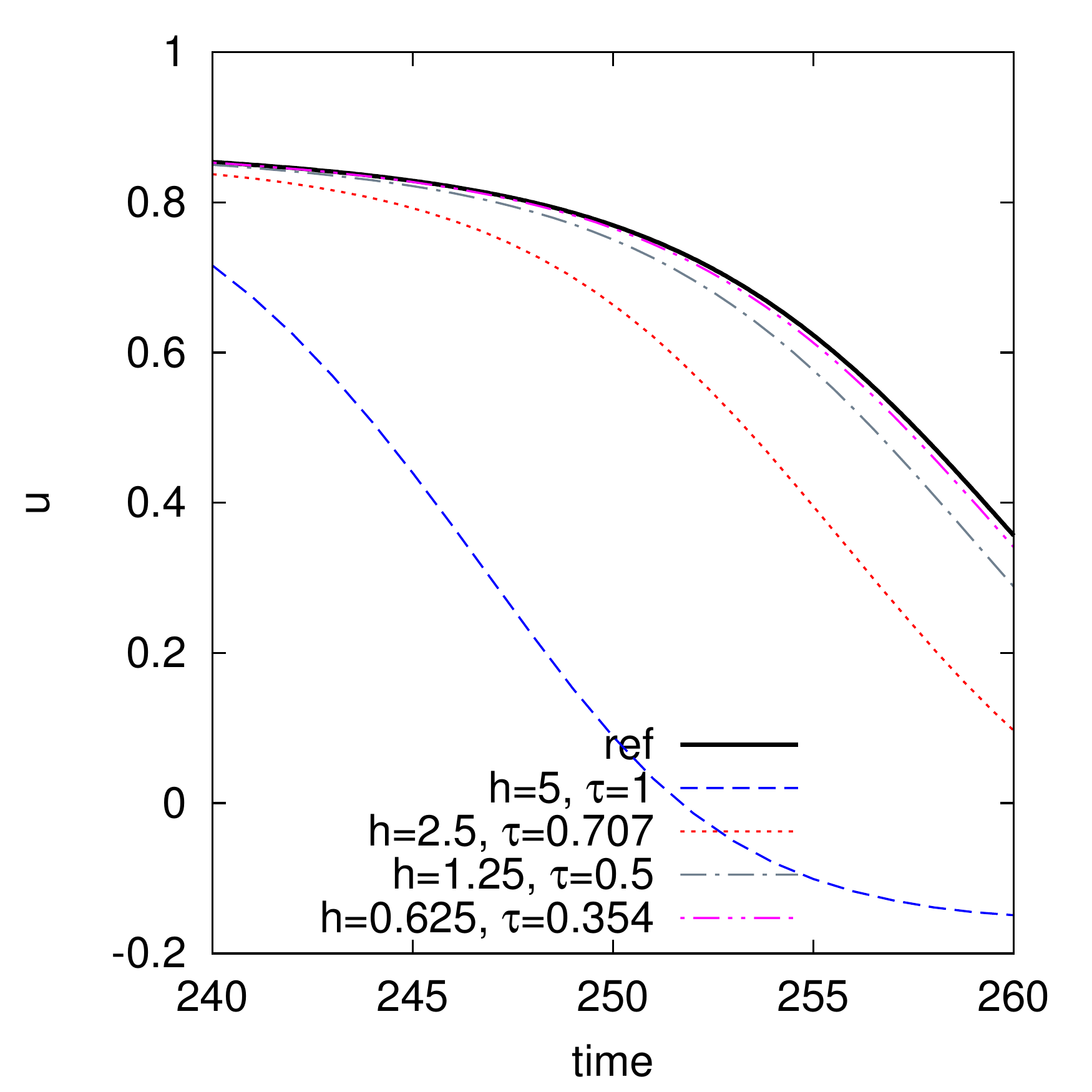}
  }
  \subfloat{
    \includegraphics[width=0.45\textwidth]{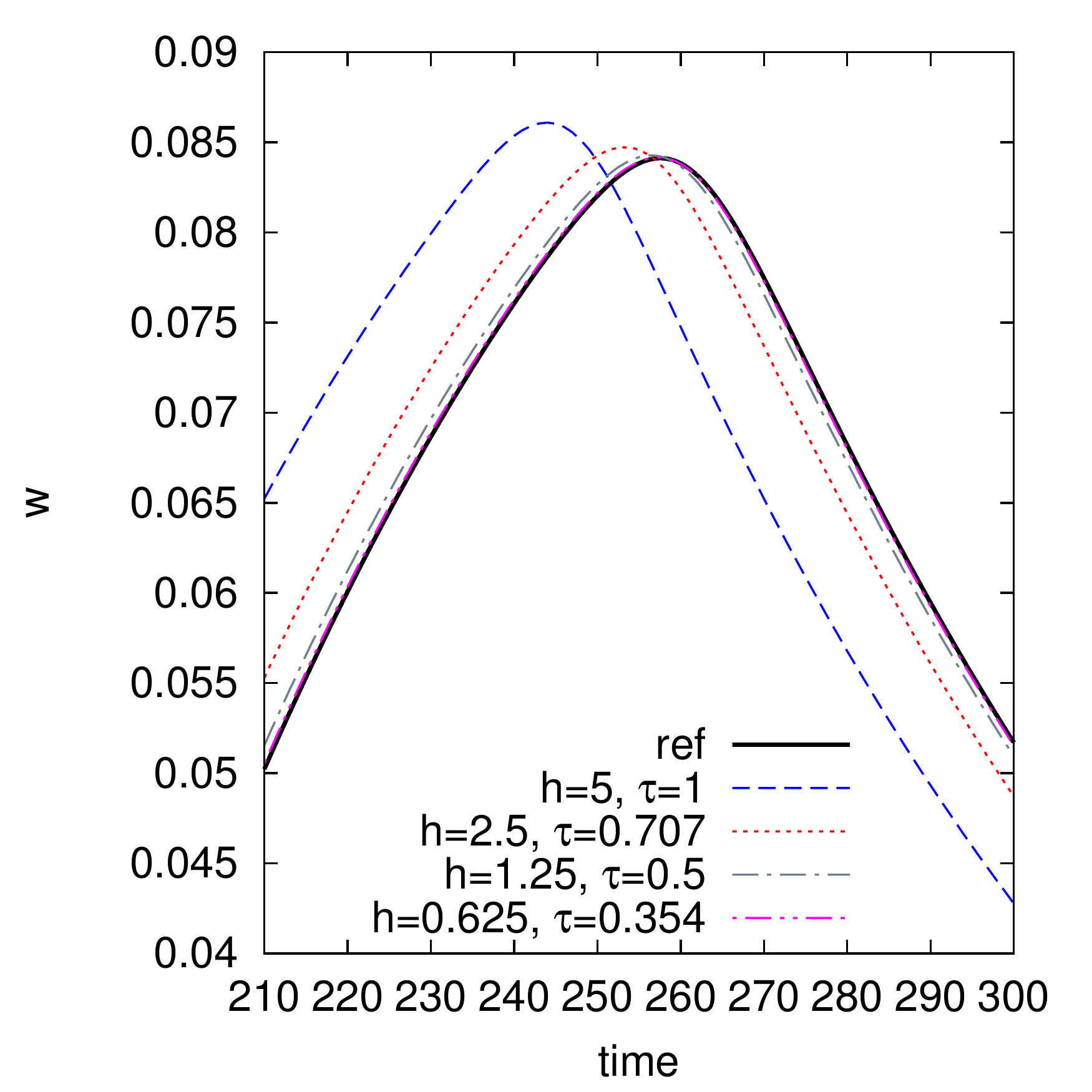}
  }
  \caption{Test case 3: comparison of reference solution with solutions on
    uniform meshes, plotted at point $(x,y)=(0.5,0.5)$ of $u$ (left) and $w$
    (right).}
  \label{fig:pointplot}
\end{figure}

\subsubsection{Efficiency of the adaptive method}
\label{subsubsec:effic-FHN1}

We assess the efficiency of the adaptive method by comparing the CPU time
vs. error level when using uniform and adapted meshes. The same approach is
taken as in Test case 2. That is, for a fixed uniform mesh, or a fixed space
error tolerance $TOL_S$, we compute the approximate solution using a variety of
fixed time steps in order to assess the general CPU time vs. error trend. The
results are reported in Figure \ref{fig:err-vs-cpu-FHN1}, with each curve
representing several computations for a fixed uniform mesh, or computations with
adapted meshes using a fixed error tolerance. For instance, the curve
representing $h=1.25$ consists of five computations with the time step ranging
from $4$ at the left-most point and $0.25$ at the right-most point. Table
\ref{tab:FHN1-err-eff-CPU} is obtained by choosing one or more points from each
curve from Figure \ref{fig:err-vs-cpu-FHN1}. These points are generally chosen
when the error first reaches the lowest point in the curve.

Before assessing the efficiency, we discuss a trend observed in the uniform mesh
computations. Note that the error level of the curves for $h=2.5$ and $h=1.25$
decrease up to a point as the time step is decreased, followed by a significant
increase in error. For $h=1.25$, the error in $w$ increases almost by a factor
of $4$ from the lowest point. Recall that we are computing a traveling wave
solution, and for large time step and coarse mesh the speed of the wave for the
computed solution is generally wrong. Since the wave speed is generally not
known, a strategy for computing an accurate solution on a given mesh might be to
refine the time step until the wave speed does not change
significantly. However, for this test case, the wave front for the approximate
solution gets ahead of or lags behind the actual wave front, and the exact error
varies significantly depending on the time step due to this displacement. The
value $\tau=2$ for this mesh roughly corresponds to the point where the computed
wave switches from being behind to ahead of the actual wave front. In general,
it is unrealistic to expect to match the wave speed except on very fine
meshes. This close matching of the wave speed for certain time steps partly
explains the apparent superconvergence of the error observed in the first three
rows of Table \ref{tab:FHN1-err-eff-CPU}. On the other hand, the computations
for the adapted meshes do not exhibit this trend in the wave speed. Moreover, in
Table \ref{tab:FHN1-err-eff-CPU} we see that if we decrease the tolerance
$TOL_S^U$ by half, we can expect the exact error in $u$ to also decrease by
half.

We compare the efficiency with Figure \ref{fig:err-vs-cpu-FHN1} and Table
\ref{tab:FHN1-err-eff-CPU}. First we note that computations with the coarser
uniform mesh are not very accurate. In order to match the accuracy for the
coarser adapted mesh computations with $TOL_S^U=0.5,$ $TOL_S^W=1$, which use at
most about $2600$ vertices, we need to use the finer uniform mesh with $h=1.25$,
with $12961$ vertices. With this choice of error tolerance, one can expect to
achieve the same level of error as with the finer uniform mesh in about $1/3$ of
the total CPU time. This same observation holds for the error in $u$ comparing
the uniform mesh computations with $h=0.625$ and the adapted meshes with
$TOL_S^U=0.25,$ $TOL_S^W=0.5$. For the error in $w$, we did not achieve the same
level of error for the adapted meshes as for the uniform meshes. However, note
that we only divided $TOL_S^W$ in half from the first level to the next two, and
as a result, only lowered the error in $w$ by half by the last row. At the same
time, the error for the last two uniform meshes decreased by a factor of four,
which is the expected asymptotic rate. For this test case, it was not found to
be worthwhile to attempt to set $TOL_S^W$ to be too low since it tended to
result in meshes that are almost uniform. As a result, we chose to refine both
error tolerances at the same rate. This choice leads to improved efficiency for
controlling the error in $u$ at the the expense of having a slightly larger
asymptotic error in $w$.

There is some overhead introduced by adapting the mesh. For this test case, the
additional operations involved for the adapted mesh, including computing the
estimator, adapting the mesh, and reinterpolation of the solution on the adapted
mesh, combine to about $15-35$\% of the total computation time, depending on how
often the adaptation occurs. Every time the mesh is adapted, the linear system
needs to be assembled again with a new sparsity pattern. Another observation is
that, in general, significantly more time is spent solving the linear system for
the adapted meshes compared to uniform meshes with a given number of DOFs.

\begin{figure}[ht]
  \centering
  \subfloat{
    \includegraphics[width=0.45\textwidth]{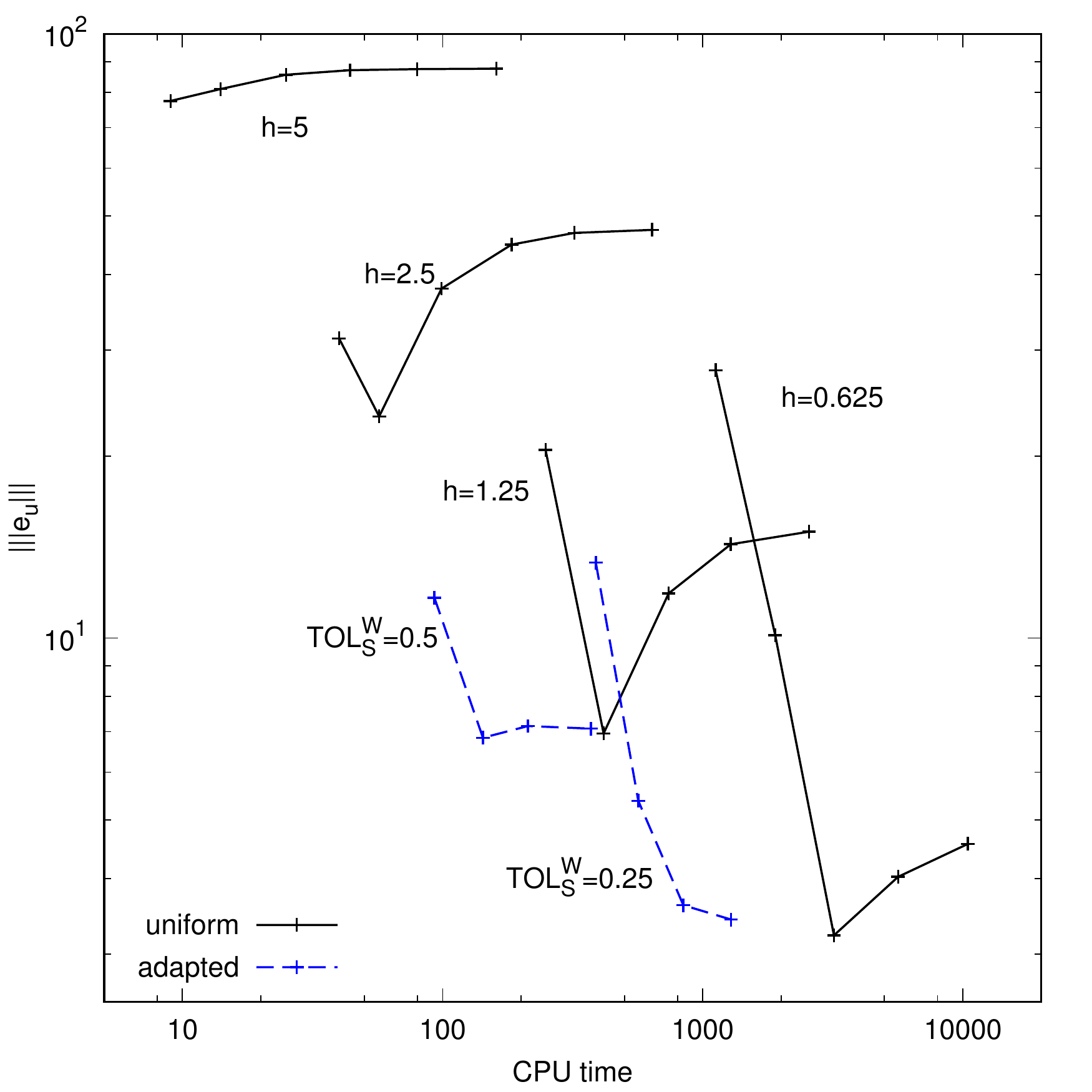}
  }
  \subfloat{
    \includegraphics[width=0.45\textwidth]{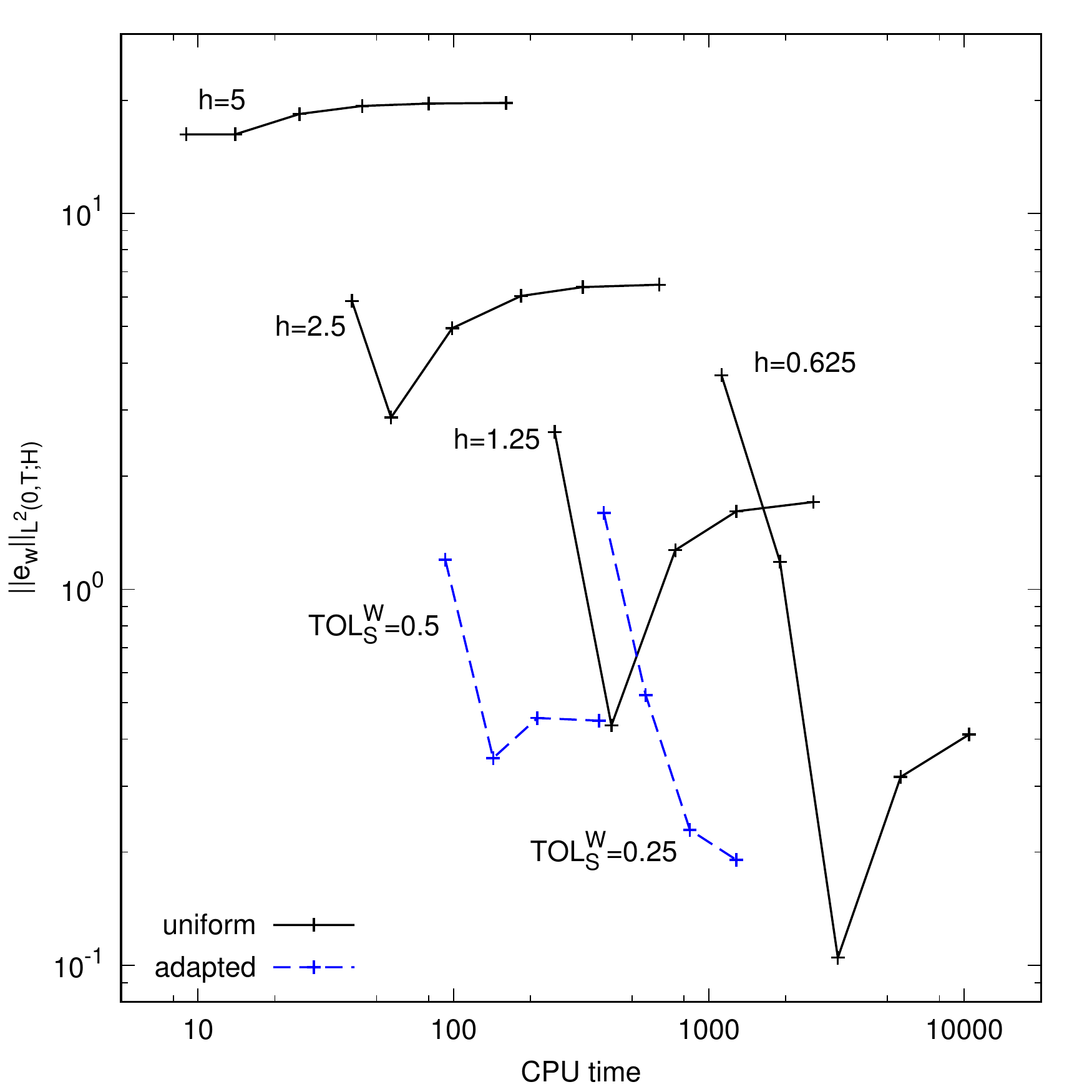}
  }
  \caption{Test case 3: error vs. CPU time for uniform and adapted mesh computations.}
  \label{fig:err-vs-cpu-FHN1}
\end{figure}

\begin{table}[ht]
  \footnotesize
  \centering

  \caption*{Uniform meshes.}
  \vspace{-7pt}
  \begin{tabular}{*{10}{c}}
    $h$ &$\tau$ &$\vertiii{e_u}$ &$\|e_w\|_{L^2(0,T;H)}$ &$ei^{S,U}$ &$ei^{S,W}$ &CPU(s) &DOFs\\
    \hline
    5     &8     &77.4 &16.2  &0.451 &0.174 &9   &841  \\
    2.5   &4     &23.2 &2.86  &1.20  &0.309 &57  &3281 \\
    1.25  &2     &6.99 &0.435 &2.38  &0.546 &416 &12961\\
    0.625 &1     &3.22 &0.104 &2.71  &0.578 &3199&51251\\
  \end{tabular}

  \vspace{7pt}

  \caption*{Adapted meshes.}
  \vspace{-7pt}
  \begin{tabular}{*{15}{c}}
    $TOL_S^U$ &$TOL_S^W$ &$\tau$ &$\vertiii{e_u}$ &$\|e_w\|_{L^2(0,T;H)}$ &$ei^{S,U}$ &$ei^{S,W}$
    &CPU(s) &DOFs \\
    \hline
    0.5   &1   &1    &6.84 &0.355 &2.89 &1.52  &143  &607  - 2633\\
    0.25  &0.5 &0.5  &3.61 &0.229 &2.86 &0.920 &843  &1880 - 7950\\
    0.25  &0.5 &0.25 &3.42 &0.190 &3.06 &1.11  &1284 &1947 - 7600\\
  \end{tabular}

  \vspace{7pt}

  \caption{Test case 3: error, effectivity index and total CPU time.}
  \label{tab:FHN1-err-eff-CPU}

\end{table}


\subsection{Test case 4}
\label{subsec:MS_tanh100_long}

Take the same domain and initial condition as Test case 3 with $T=500$, but with
the regularized Mitchell-Schaeffer model with regularization parameter
$\kappa=100$.  The parameters are taken from \cite{RB13}:
$\tau_{in}=0.315,\,\tau_{out}=5.556,\,\tau_{open}=94.942,\,\tau_{close}=168.5,\,u_{gate}=0.13$,
diffusion coefficient is $3.949$. The solution is a more realistic
representation of a heartbeat in terms of the duration of the phases of the
action potential. The action potential is activated in the lower left corner of
the domain and the wave radiates away from the origin until about time $t=60$
when the entire domain is depolarized. A long plateau follows in which the gate
$w$ closes and the inward and outward currents are roughly balanced.  At about
$t=290$ the outward current begins to dominate and the region is completely
repolarized by about $t=360$. As $u$ drops below $u_{gate},$ the gate slowly
opens. See Figure \ref{fig:MS_tanh100_long-phases} for the solution profile at
different phases.

\begin{figure}
  \centering
  \subfloat{
    \includegraphics[width=0.23\textwidth]{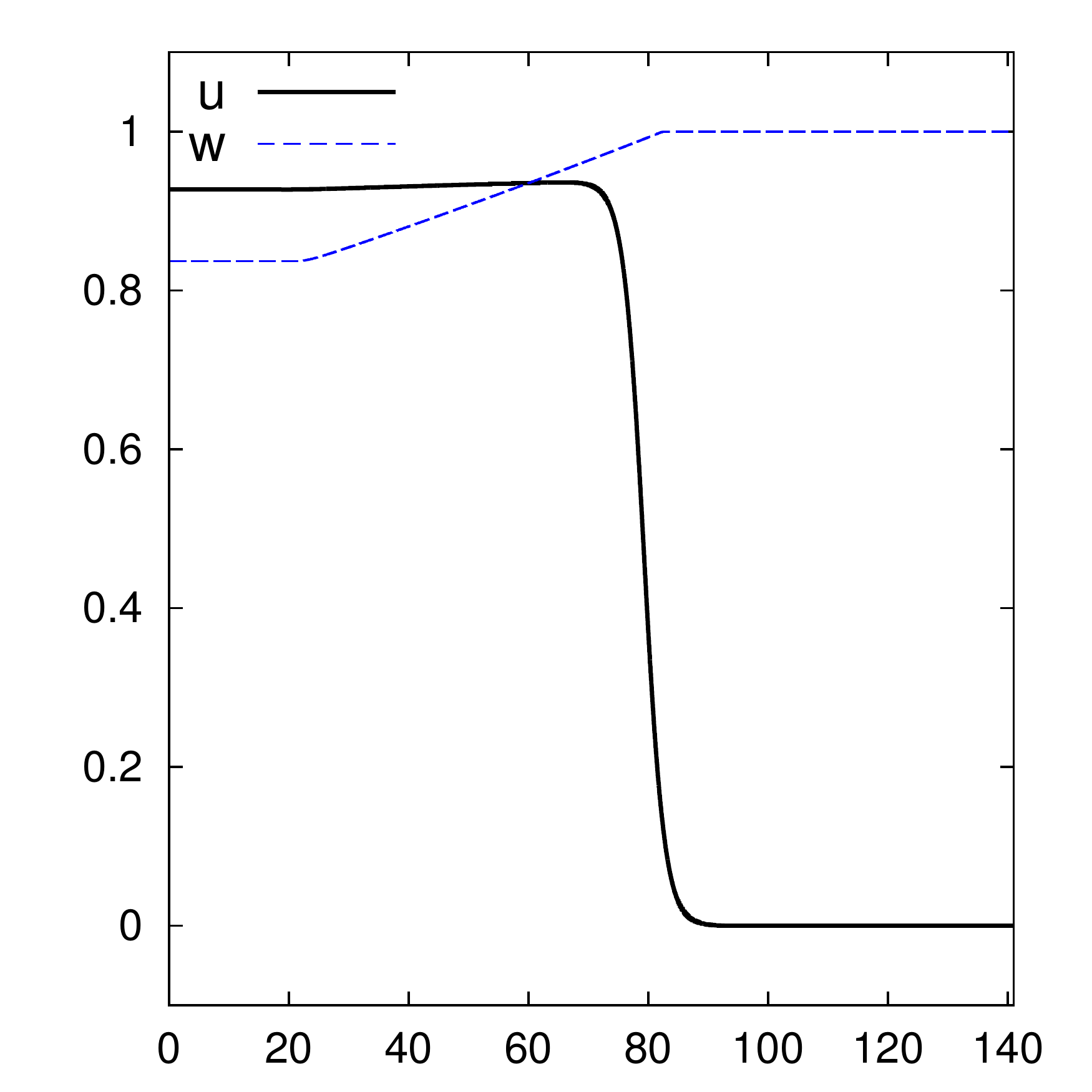}
  }
  \subfloat{
    \includegraphics[width=0.23\textwidth]{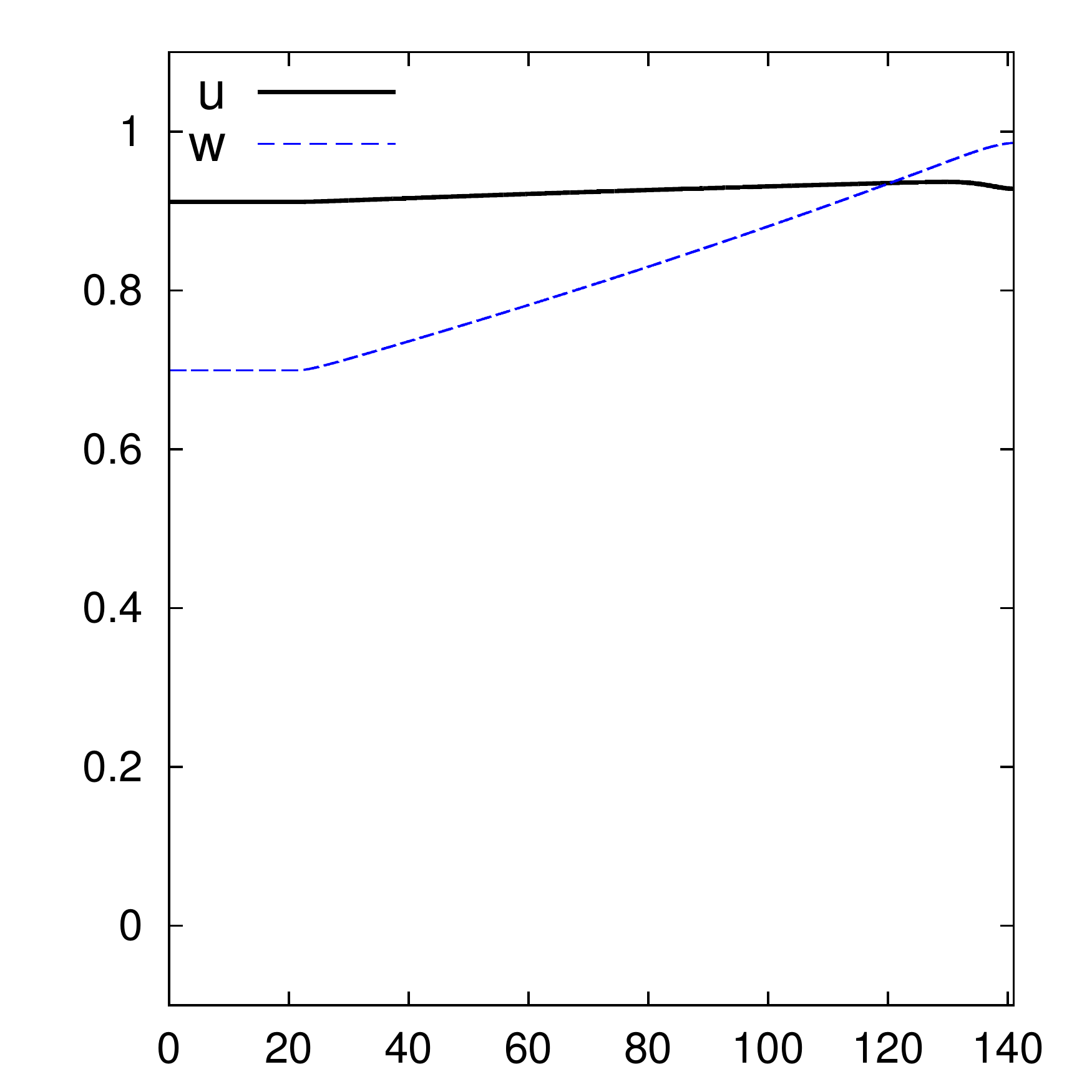}
  }
  \subfloat{
    \includegraphics[width=0.23\textwidth]{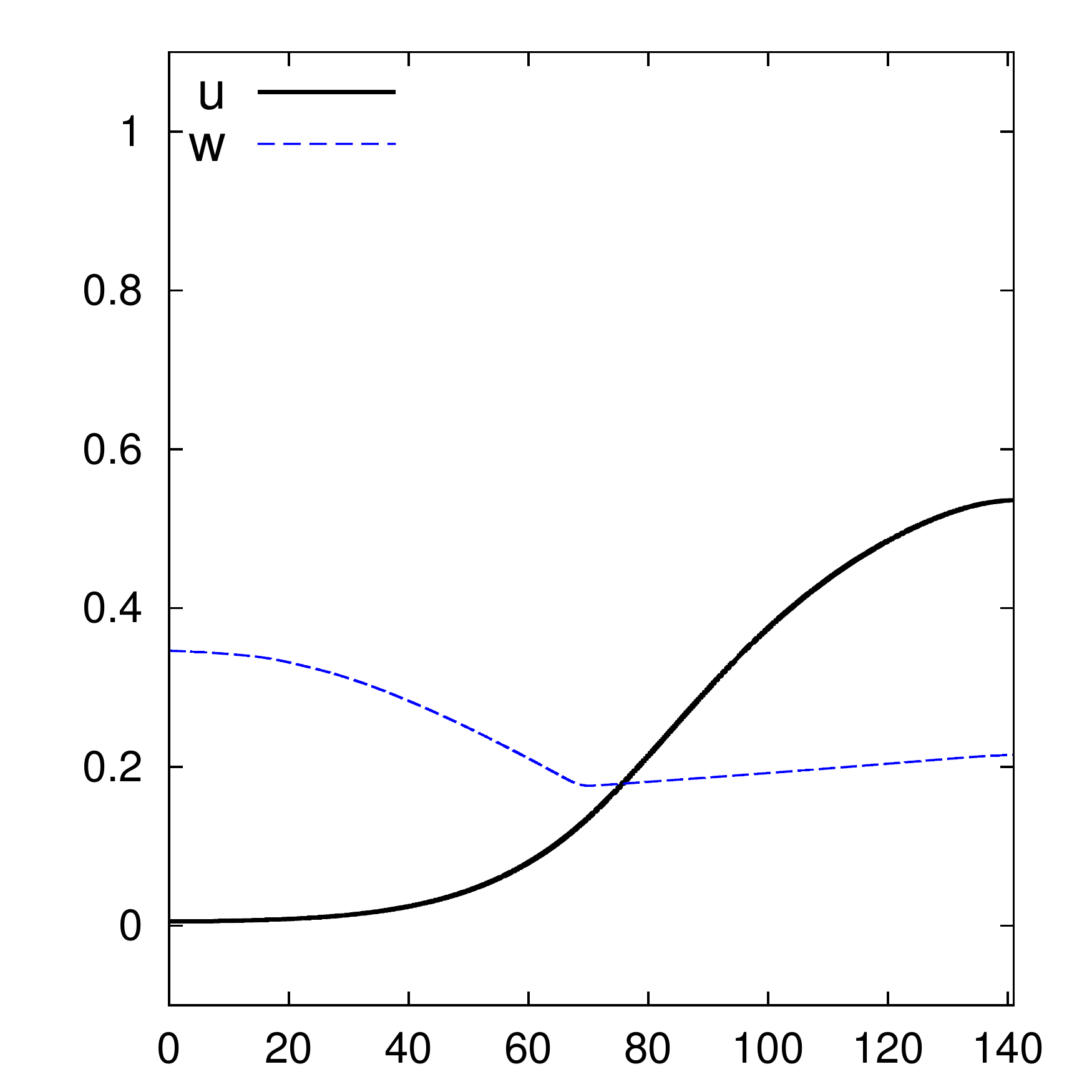}
  }
  \subfloat{
    \includegraphics[width=0.23\textwidth]{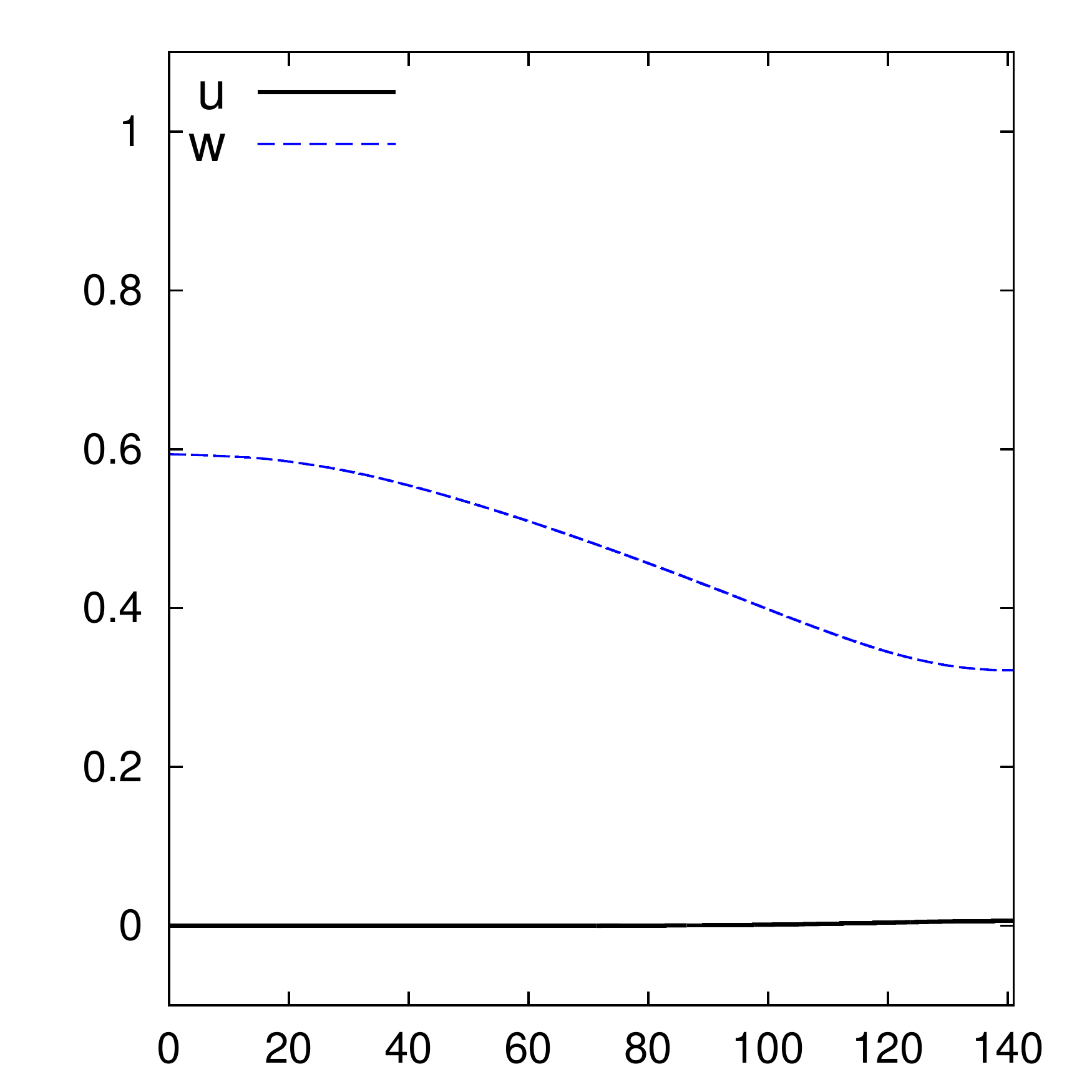}
  }
  \caption{Test case 4: plot of $u,\,w$ over the line $x=y$ as a function of arc
    length. From left to right: depolarization phase at $t=30$, plateau at
    $t=60$, repolarization phase at $t=316$, and the
    recovery phase at $t=360$.}
  \label{fig:MS_tanh100_long-phases}
\end{figure}


\subsubsection{On the residual for the recovery variable}
\label{subsubsec:mono-alterr}

For the monodomain problem, the variables $u$ and $w$ do not decouple, so the
arguments relating to problem (\ref{ode-lin}) do not directly apply. However,
the equation for $w$ in the Mitchell-Schaeffer model switches between two
equations of this form: with $\mu=\tau_{open}^{-1}$, $f=\tau_{open}^{-1}$ for
$u<u_{gate}$ and $\mu=\tau_{close}^{-1}$, $f=0$ for $u\geq u_{gate}$. During the
action potential, this switching occurs when $u$ varies quickly during the
depolarization and repolarization phases. These phases occupy a relatively short
duration. During the plateau and recovery phases, $w$ varies almost
independently of $u$. From Figure \ref{fig:MS-errline-recov} we observe that the
element residual $R_{K,n}^W$ is only significant when $u\approx u_{gate}$, which
occurs on the wave-front, and as a result, the estimator $\eta^{S,W}$ is close
to $0$ elsewhere. On the other hand, the estimator $\omega^{S,W}$ detects a
significant error contribution when $(x^2+y^2)^{1/2}\approx 20$. This can be
understood from Figure \ref{fig:MS_tanh100_long-phases} during the
depolarization and plateau phases. One expects a relatively large contribution
to the interpolation error, as $w$ transitions from a constant value. In this
region the interpolation error is not seen by the residual estimator. This is in
agreement with the discussion in Section \ref{subsec:recov-mod}.

\begin{figure}[ht]
  \centering
  \subfloat{
    \includegraphics[width=0.45\textwidth]{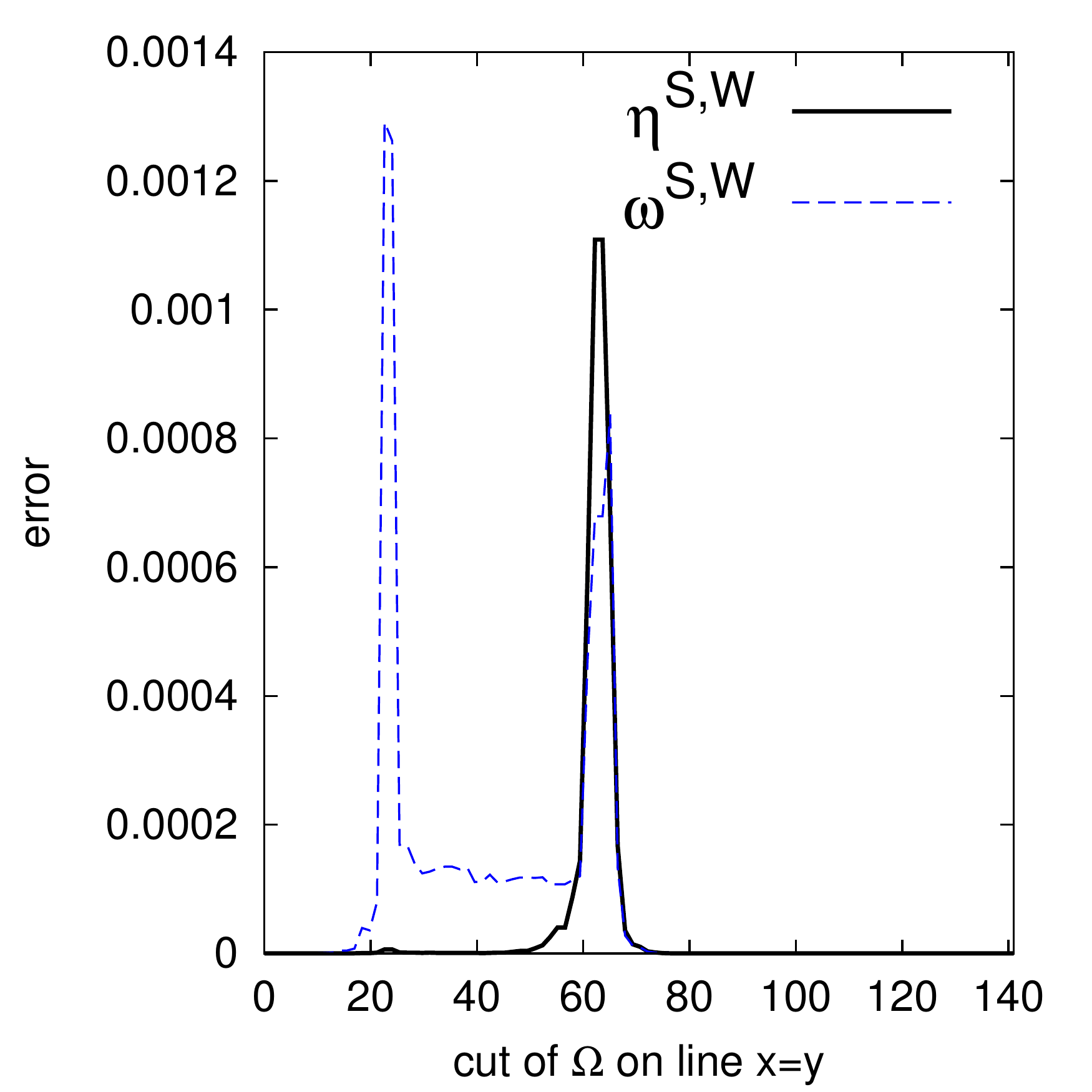}
  }
  \caption{Test case 4: plot of the estimators for $w$ on the line $x=y$ at time
    $t=20$. Solution computed on a uniform mesh with $6400$ elements and a
    constant time step $0.25$.}
  \label{fig:MS-errline-recov}
\end{figure}


\subsubsection{Space-time adaptation}
\label{subsubsec:mono-sptime}


As for the scalar problem, we do not include the terms involving the third order
finite difference reconstructions, which exhibit an overshoot each time the mesh
is adapted due to the interpolation error. We perform a complete space-time
adaptive solution with the following parameters: $TOL_S^U=0.0625$,
$TOL_S^W=0.0625$, $TOL_T^U=0.035875$ and $TOL_T^W=0.0075$. From Figure
\ref{fig:MS_tanh100_long-step-elements-err} we see a variation in the time step
of two orders of magnitude, with the smallest steps ranging from $0.05$ to $0.1$
in the depolarization and repolarization phases, and the largest steps ranging
from about $1$ to $4$ during the plateau and recovery phases. The control of the
error in space is illustrated in Figure
\ref{fig:MS_tanh100_long-step-elements-err} (right), and we see that most of the
adaptation occurs during the depolarization and repolarization phases. The
decision to adapt during the depolarization phase is given by condition
(\ref{eqn-TOL-SU}), $u$ being the fast variable, while it is given by
(\ref{eqn-TOL-SW}) during the repolarization phase. Examples of adapted meshes
for these phases are shown in Figure \ref{fig:MS_tanh100_long-mesh}. The mesh
for the depolarization phase is quite similar to those for the FitzHugh-Nagumo
model in Figure \ref{fig:FHN1-sol-mesh}, and with a small layer of refinement
near $\sqrt{x^2+y^2}=20$, fitting the observation made for Figure
\ref{fig:MS-errline-recov}. We remark that if the mesh were to be adapted only
to the variable $u$, as is done for instance in \cite{sougorpigfarberpit10}, the
mesh would only be refined in the wave front, and the slow variation in $w$
would be not be captured properly. This in turn would eventually spoil the
quality of approximation for the subsequent phases. The mesh for the
repolarization phase is generally more diffuse, with the refinement of the
action potential downstroke requiring fewer elements than the wave front,
resulting in a mesh with $7000$ elements. This is likely due to the slow
variation of $w$.

\begin{figure}[ht]
  \centering
  \subfloat{
    \includegraphics[width=0.32\textwidth]{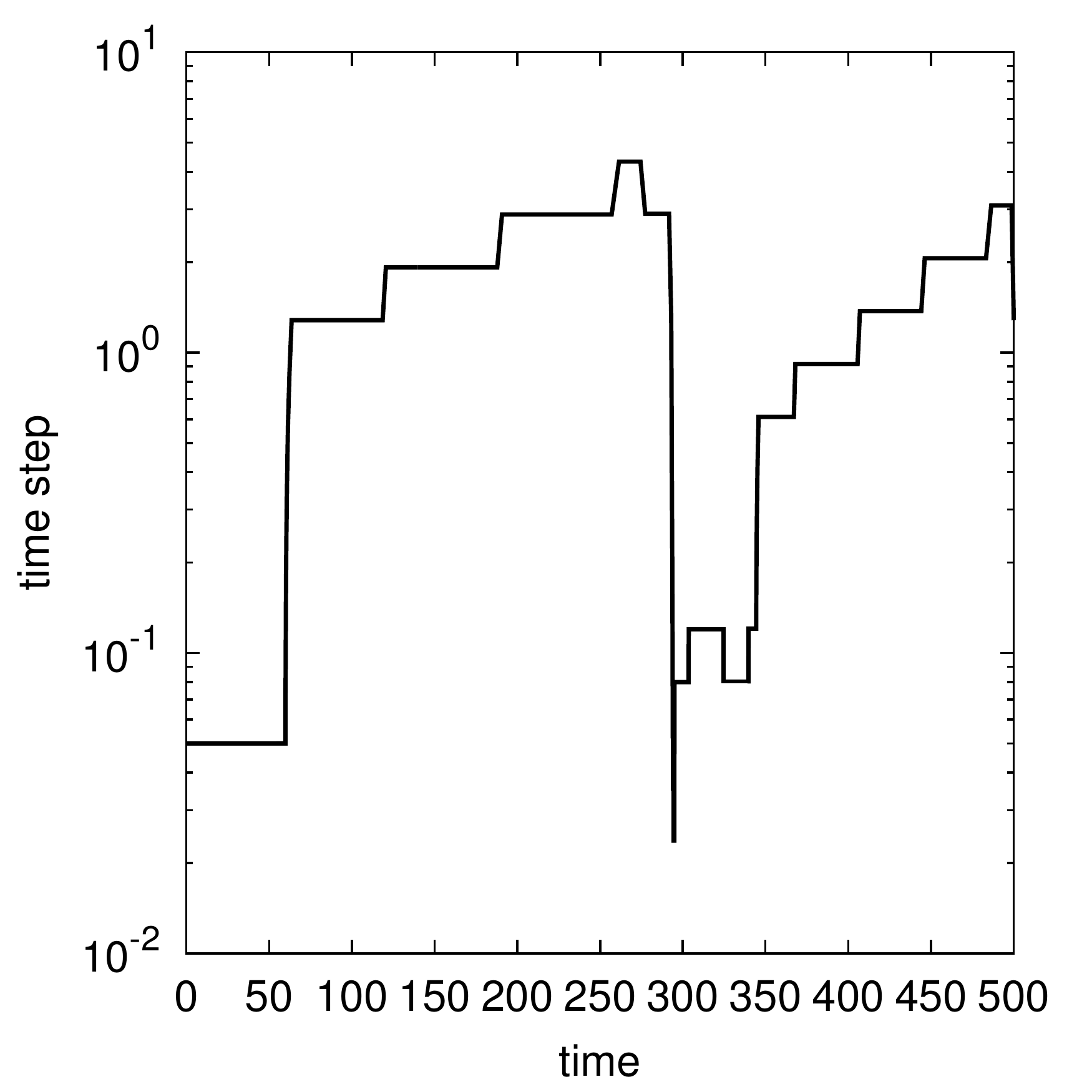}
  }
  \subfloat{
    \includegraphics[width=0.32\textwidth]{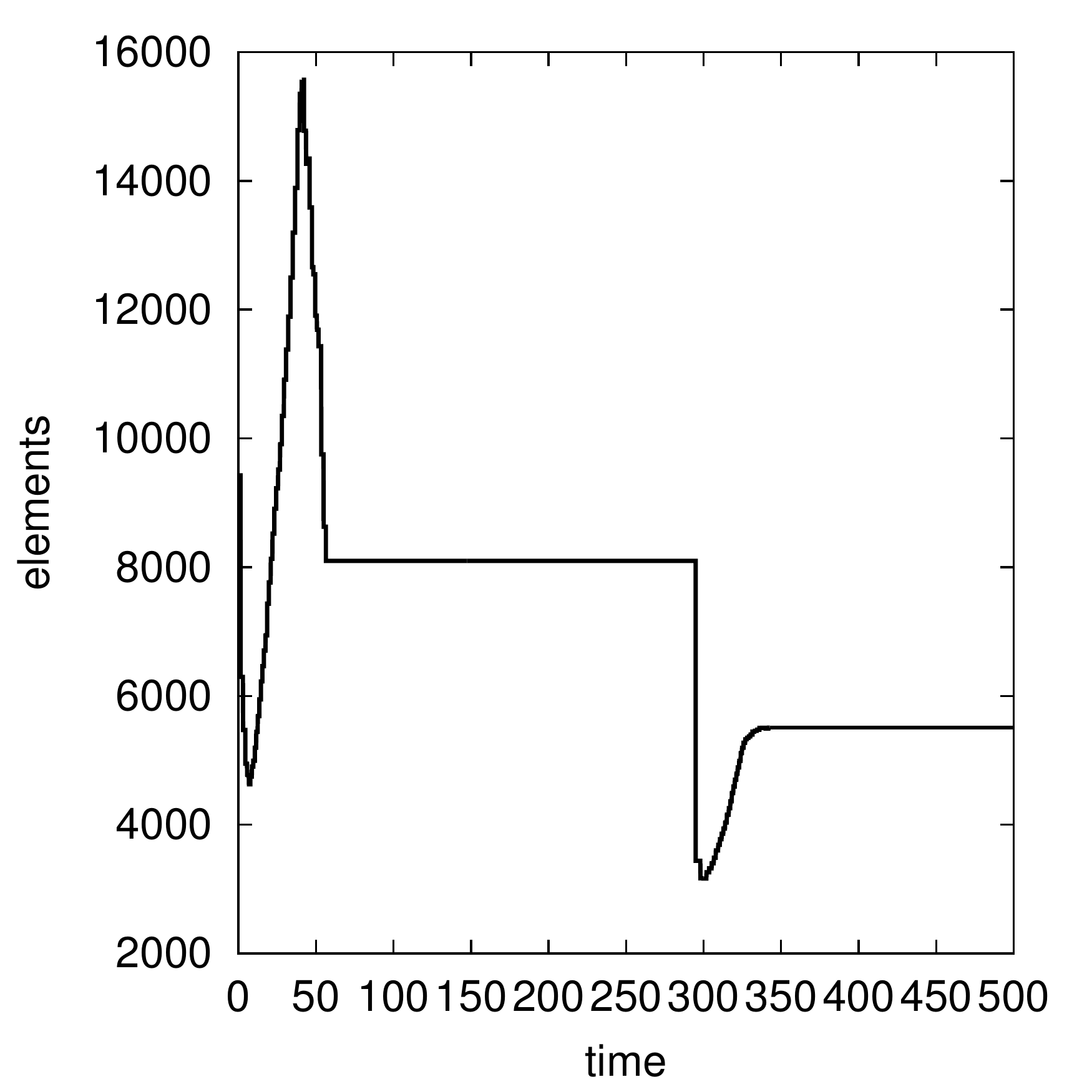}
  }
  \subfloat{
    \includegraphics[width=0.32\textwidth]{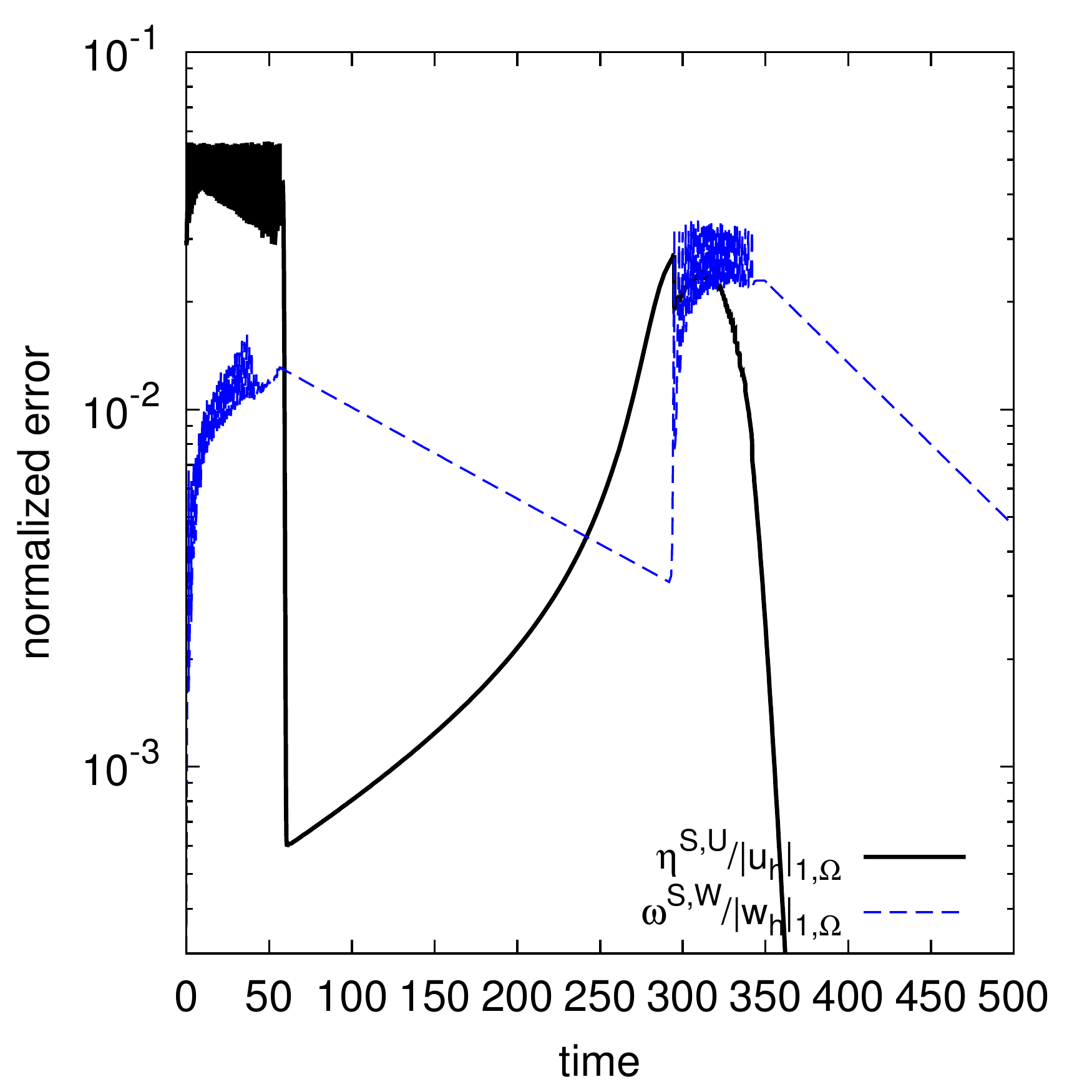}
  }
  \caption{Test case 4: evolution of the time step (left), the number of
    elements (center), and the estimated space error (right) for space-time adapted
    solution.}
  \label{fig:MS_tanh100_long-step-elements-err}
\end{figure}

\begin{figure}[ht]
  \centering
  \subfloat{
    \includegraphics[width=0.45\textwidth]{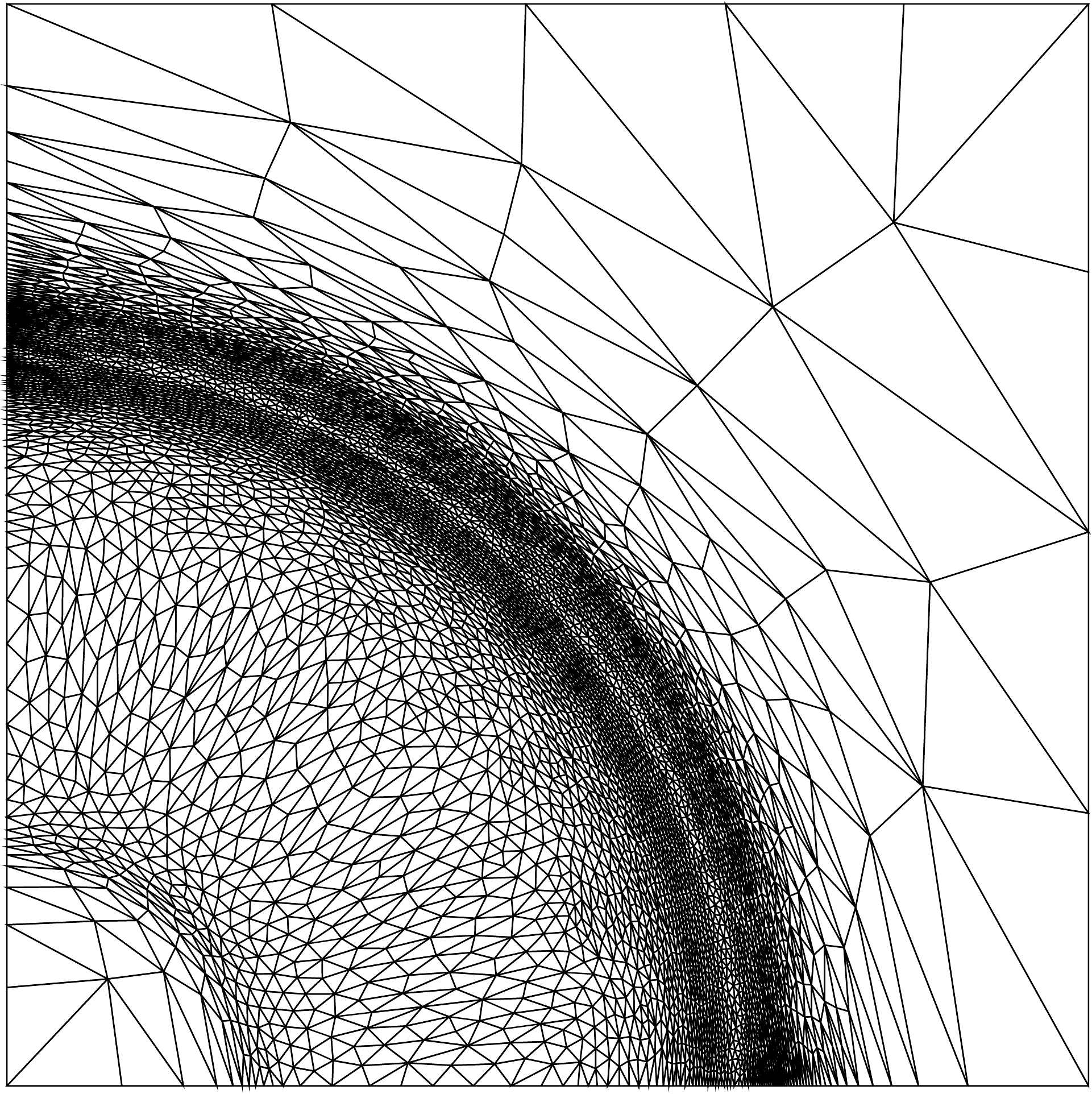}
  }
  \subfloat{
    \includegraphics[width=0.45\textwidth]{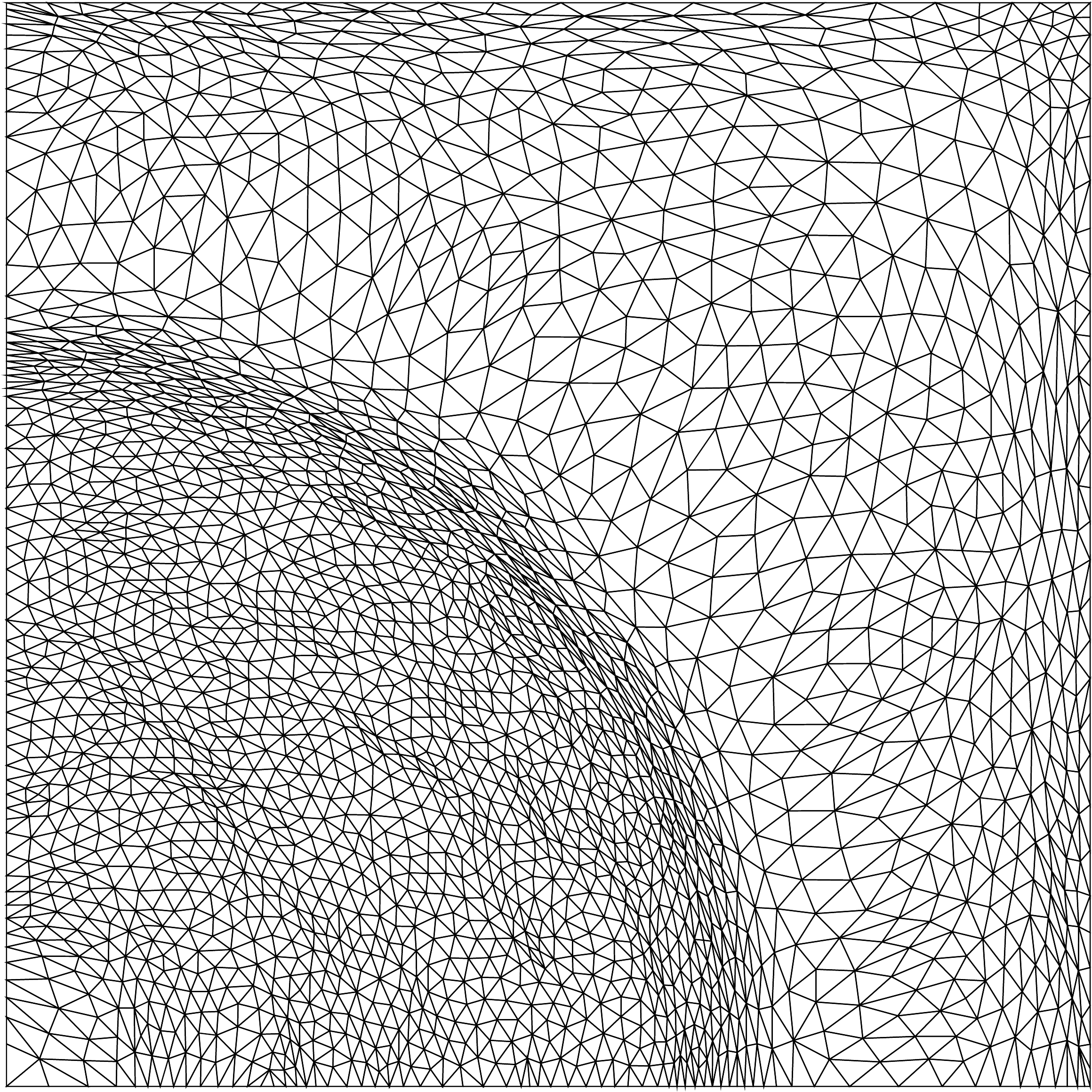}
  }
  \caption{Test case 4: adapted mesh at $t=30$ (left) during the depolarization
    phase and at $t=316$ (right) during the repolarization phase.}
  \label{fig:MS_tanh100_long-mesh}
\end{figure}

As a final note, the efficiency of applying the space-time adaptation algorithm
for this example requires a thorough study. This could for instance be done by
computing a reference solution as is done for Test case 3, and
therefore approximating the exact error. However, due to the scale of this
problem, a different approach may be required.


\section{Conclusion}
\label{sec:conc-mono}

In this paper we introduced an anisotropic residual error estimator for a scalar
reaction-diffusion problem and for the monodomain system, discretized with $P_1$
finite elements in space, and the variable step BDF2 method in time. The
estimator is shown to give an upper bound for the error in the energy norm for
the parabolic variables, and in $L^2(0,T;L^2(\Omega))$ norm for the second
variable. It was found that the residual estimator for the second variable of
the monodomain problem could not be used in practice for mesh adaptation
purposes, due to a fundamental difference in behaviour between parabolic PDEs
and ODEs. In particular, the estimator does not provide a suitably local
representation of the error. Instead, a simplified estimator was proposed for
the ODE variable, which is based on interpolation estimates combined with a
gradient recovery operator. Numerical computations are carried out, confirming
the reliability of the estimator. A space-time adaptation method is proposed to
simultaneously control the error in space and time. The mesh is adapted using a
metric to control the anisotropic nature of the error. For the scalar problem,
it was found that the space-time method is at least as efficient in terms of
achieving a global level of error in a given CPU time compared to applying mesh
adaptation with a constant time step, and significantly more efficient than
computing with a uniform mesh and constant time step. For the monodomain
problem, improved efficiency was observed controlling the error for the
transmembrane potential when applying the error estimator guided mesh adaptation
method. The space-time adaptation method is applied to a problem exhibiting
large variation in time scales. While the results appear promising, more work is
required to accurately assess the efficiency of the algorithm.

\section*{Acknowledgements}

The authors would like to acknowledge the financial support of an Ontario
Graduate Scholarship (OGS) and by Discovery Grants of the Natural Sciences and
Engineering Research Council of Canada (NSERC).  The authors wish to thank the
professionals and researchers at GIREF, in particular Thomas Briffard, {\'E}ric
Chamberland, Andr{\'e} Fortin, and Cristian Tibirna, for making available their
code MEF++ and for their assistance in using the code during visits to the
laboratory at Universit\'e Laval and email correspondences.


\FloatBarrier
\section*{References}
\bibliographystyle{plain}
\bibliography{../../bib/list}

\end{document}